\numberwithin{equation}{section}
\theoremstyle{plain}
\newtheorem{theorem}{Theorem}[section]
\newtheorem{theorem*}{Theorem}
\newtheorem{lemma}[theorem]{Lemma}
\newtheorem{proposition}[theorem]{Proposition}
\newtheorem{corollary}[theorem]{Corollary}
\newtheorem*{question*}{Question}
\newtheorem{problem}[theorem]{Problem}
\newtheorem*{problem*}{Problem}
\newtheorem*{conjecture*}{Conjecture}
\theoremstyle{remark}
\newtheorem{remark}[theorem]{Remark}
\newtheorem{example}[theorem]{Example}
\newtheorem{definition}[theorem]{Definition}
\newcommand{\nn}{\nonumber}
\newcommand{\R}{\mathbb{R}}
\newcommand{\E}{\mathbb{E}}
\newcommand{\e}{{\rm e}^}
\newcommand{\ep}{\epsilon}
\definecolor{purple}{rgb}{0.8,0,0.1}
\newcommand\red{}
\definecolor{darkgreen}{rgb}{0,0.4,0}
\newcommand\redf{}
\begin{document}

\begin{frontmatter}
%%%%%%%%%%%%%%%%%%%%%%%%%%%%%%%%%%%%%%%%%%%%%%
%%                                          %%
%% Enter the title of your article here     %%
%%                                          %%
%%%%%%%%%%%%%%%%%%%%%%%%%%%%%%%%%%%%%%%%%%%%%%
\title{The Stefan problem and free targets of optimal Brownian martingale transport}
%\title{A sample article title with some additional note\thanksref{T1}}
\runtitle{The Stefan problem and optimal Brownian martingale transport}
%\thankstext{T1}{A sample of additional note to the title.}

\begin{aug}
%%%%%%%%%%%%%%%%%%%%%%%%%%%%%%%%%%%%%%%%%%%%%%%
%% Only one address is permitted per author. %%
%% Only division, organization and e-mail is %%
%% included in the address.                  %%
%% Additional information can be included in %%
%% the Acknowledgments section if necessary. %%
%% ORCID can be inserted by command:         %%
%% \orcid{0000-0000-0000-0000}               %%
%%%%%%%%%%%%%%%%%%%%%%%%%%%%%%%%%%%%%%%%%%%%%%%
\author[A]{\fnms{Inwon C.}~\snm{Kim}\ead[label=e1]{ikim@math.ucla.edu}}
\and
\author[B]{\fnms{Young-Heon}~\snm{Kim}\ead[label=e2]{yhkim@math.ubc.ca}}
%\and
%\author[B]{\fnms{???}~\snm{???}\ead[label=e3]{???@???}}
%%%%%%%%%%%%%%%%%%%%%%%%%%%%%%%%%%%%%%%%%%%%%%
%% Addresses                                %%
%%%%%%%%%%%%%%%%%%%%%%%%%%%%%%%%%%%%%%%%%%%%%%
\address[A]{Department of Mathematics\\ University of California at Los Angeles\\ Los Angeles, CA, USA\printead[presep={,\ }]{e1}}

\address[B]{Department of Mathematics\\ University of British Columbia\\ Vancouver, V6T 1Z2 Canada\printead[presep={,\ }]{e2}}
\end{aug}

\begin{abstract}
We formulate and solve a free target optimal Brownian stopping problem from a given distribution while the target distribution is free and is conditioned to satisfy a given density height constraint.  
  The free target optimization problem exhibits  monotonicity, from which a remarkable universality follows, in the sense that the optimal target is independent of its Lagrangian cost type. In particular, the solutions to this optimization problem generate solutions to both unstable and stable type of the Stefan problem, where former stands for freezing of supercooled fluid $(St_1)$ and the latter for ice melting $(St_2)$. This unified approach to both types of Stefan problem is new. In particular we obtain global-time existence and weak-strong uniqueness for the ill-posed freezing problem $(St_1)$, for a given initial data and for a well-prepared class of initial domains generated from the initial data.
\end{abstract}

\begin{keyword}[class=MSC]
\kwd[Primary ]{60, 49}
%\kwd{35, 80}
\kwd[; secondary ]{35, 80}
\end{keyword}

\begin{keyword}
\kwd{Stopping Times, Optimal Transport, Free Boundary, Stefan Problem, Supercooled Stefan Problem}
%\kwd{???}
\end{keyword}

\end{frontmatter}
%%%%%%%%%%%%%%%%%%%%%%%%%%%%%%%%%%%%%%%%%%%%%%
%% Please use \tableofcontents for articles %%
%% with 50 pages and more                   %%
%%%%%%%%%%%%%%%%%%%%%%%%%%%%%%%%%%%%%%%%%%%%%%
\tableofcontents

%%%%%%%%%%%%%%%%%%%%%%%%%%%%%%%%%%%%%%%%%%%%%%
%%%% Main text entry area:

\maketitle

%\tableofcontents

\section{Introduction} \label{sec:introduction}

Given a configuration $\mu$ of agents subject to a random motion, what would happen if they try to stay under a given allowed population density height $f$,  in particular, while  trying to optimize  their collective dynamics?
In the present paper we  consider the Brownian motion $W_t$ in $\R^d$, and the collective dynamics is controlled only by a stopping time $\tau$, that is, when each random particle stops. Such dynamics is called Brownian martingale.
Precisely, we consider the following:
 \begin{problem}
 \label{prob:mainquestion}
Consider a nonnegative function $f\in L^{\infty}(\R^d)$.
  Given an absolutely continuous, compactly supported measure  $\mu$ on $\R^d$,  find a stopping time $\tau^*$ and  corresponding distribution  $W_{\tau^*} \sim \nu^*$, that solve
   \begin{align}\label{eqn:problem-upper}
\mathcal{P}_f(\mu) :=  \inf_{(\tau, \nu)} \{ \mathcal{C}(\tau) \  |\  W_0 \sim \mu, \ W_\tau \sim  \nu, \,   \nu \le f, \ \& \ \hbox{ $\nu$ is compactly supported} 
\}.
\end{align}
 \end{problem}
 \noindent Notice that here we do not assume that $\mu$, $\nu$ are probability measures.
 So the expression `$W_0 \sim \mu$ and $ W_\tau \sim \nu$' should be understood as for each measurable set $E$, 
\begin{align*}
 \nu[E]= \int Prob[W_\tau \in E \ | \ W_0 =x] d\mu(x).
\end{align*}
The problem \eqref{eqn:problem-upper} obviously requires some condition for $f$ because if $f$ is too small there will be no admissible $\nu$. We consider those cases where there is at least one admissible $\nu$. Our model case is $f\equiv 1$ on $\R^d$, however a much more general $f$ can be considered.
\noindent 
We are particularly interested in the cost $\mathcal{C}(\tau)$ of the `Lagrangian form':
\begin{align}\label{eqn:Lag-cost}
 \mathcal{C}(\tau) =  \mathbb{E}\left[ \int_0^\tau L(W_t, t) dt \right] \hbox{ for a continuous $L: \R^d \times \R_{\ge 0}\to \R_{\ge 0}$}.
\end{align}

\medskip

The problem \eqref{eqn:problem-upper}  is in the spirit of what is studied in ``constrained transport", in particular, for analyzing congested crowd motion (see e.g. \cite{Santambrogio, santam-BV, AKY14})
 and tumor growth (see e.g. \cite{PQV14, JKT21}) which is subject to contact inhibition.   
 {\red 
In those cases, the optimal dynamics is deterministic, and it amounts to control a vector field (drift force) to guide the dynamics; additional noise to their dynamics  would generate stochastic aspects.  In our problem, which is a  Brownian martingale version of the free target  optimal transport problem with density upper bound constraint \cite{santam-BV}, 
 the dynamics is completely random as it follows the Brownian motion and is controlled only by the choice of stopping rule.}

\medskip

Throughout this paper we make assumptions on the cost $\mathcal{C}$ \eqref{eqn:Lag-cost}:
\begin{align}\label{eqn:boundedness-assumption}
 \hbox{ $0\le L\le D$ for some constant $D$ and, }
\end{align}
\begin{align}\label{eqn:monotone-assumption}
 \hbox{$t \mapsto L(\cdot, t)$ is either strictly increasing (Type (I)), or strictly decreasing (Type (II)). }
\end{align}

\medskip

We obtain existence, uniqueness, and characterization of the optimal solution of Problem~\ref{prob:mainquestion}; see Theorem~\ref{thm:existence}.  Remarkably  the problem exhibits universality (see Theorem~\ref{thm:universality}), namely,  the resulting optimal target $\nu^*$ does not depend on a particular choice of the function $L$ and its type,  as long as $L$ satisfies \eqref{eqn:monotone-assumption}. {\redf In fact, it turns out that $\nu^*$ coincides with the shadow of $\mu$ in $f$ as introduced in  
\cite{Rost71, beiglboeck-juillet2016} (c.f. \cite{bruckerhoff2021shadows}); see Remark~\ref{rmk:shadow}.}
These are due to a monotonicity property of the problem regarding the  optimal stopping $\tau^*$ and target $\nu^*$ (Theorem~\ref{thm:monotonicity}), which also gives $L^1$ contraction (Theorem~\ref{thm:L1-contraction}) and $BV$ estimates for $\nu^*$ (Theorem~\ref{thm:BV}). Such monotonicity is a novel feature, resulting from not fixing the target distribution $\nu$ but treating it as the solution of the problem. Another important feature of Problem~\ref{prob:mainquestion}  is that the optimal solution $\nu^*$, also  the flow of the distribution $\mu_t$ of $W_t$ toward it, 
saturates the constraint $f$ (Theorem~\ref{thm:saturation}) {\red  wherever   active Brownian particles pass by before they stop}. % possible. 	

\medskip

The free target feature of  Problem~\ref{prob:mainquestion} distinguishes it from the more well studied, so-called optimal Skorokhod problem (see e.g. \cite{beiglboeck2017optimal, PDE19, GKL20, GKP3, GKP4} as well as  the 1 dimensional results considered by   \cite{mcconnell1991two, cox2013root, gassiat2015root, deangelis2018})  
where optimal stopping times are studied for given {\em fixed}  initial and target measures $\mu, \nu$  and {\em without} the density constraint $f$, 
 that is, 
 \begin{align}\label{eqn:problem-OSP}
\mathcal{P}(\mu, \nu) :=  \inf_{\tau} \{ \mathcal{C}(\tau) \  |\  W_0 \sim \mu, \ W_\tau \sim  \nu 
\}.
\end{align} 
Solutions to Problem \eqref{eqn:problem-OSP} solve the Skorokhod problem  \cite{skorokhod1965studies, Root, baxter-chacon, Rost, azema1979solution, falkner1980skorohod, perkins1986cereteli, obloj2004skorokhod} for which 
a connection to optimal transport was hinted in the work of 
 \cite{hobson2011skorokhod},
 and established in 
\cite{beiglboeck2017optimal}
who used  optimal transport theory (see e.g. the books \cite{V1, V2, santambrogio2015optimal}) and randomized stopping times to unify all the previously known solutions of the Skorokhod problem  as the solutions of \eqref{eqn:problem-OSP}. We also mention that optimal Skorokhod problem is a special case of the optimal martingale problem (see e.g.  \cite{strassen1965existence, beiglbock2013model, Dolinsky2014, beiglboeck-juillet2016, guo2016monotonicity, GKL2019, HuesmannMartingaleBB, GKL20} and references therein) which has been recently popularized in mathematical finance. 

\medskip

As a main application we combine the above mentioned results for   Problem~\ref{prob:mainquestion} with the PDE methods  developed in \cite{PDE19} for 
\eqref{eqn:problem-OSP} to make progress in the supercooled Stefan problem, a free boundary problem of the heat equation, which has been poorly understood in the literature.

\medskip

  The supercooled Stefan problem describes freezing of supercooled water into ice, and  can be written in divergence form  in $\R^d \times \R_{\ge 0}$ as
$$ (\eta-\chi_{\{\eta>0\}})_t - \Delta \eta = 0. \leqno (St_1)
$$
 Here $-\eta$ denotes the temperature of the supercooled water (water with below-freezing temperature), and  the set $\{ \eta > 0\}$ describes the region for the supercooled water. The solution of $(St_1)$ is famously known to exhibit irregular, fractal-like interfaces that could jump discontinuously over time. This is in sharp contrast to the Stefan problem for melting ice into water,
$$
 (\eta+\chi_{\{\eta>0\}})_t - \Delta \eta = 0, \leqno (St_2)
$$ 
which can be viewed as a singular nonlinear parabolic equation of the form $[\beta(\eta)]_t - \Delta \eta = 0$ with an increasing function $\beta$; see 
\cite{AL83} where one proves comparison and contraction properties of weak solutions based on this property.

\medskip

While $(St_2)$ is known to generate stable and regularizing solutions (\cite{AL83, ACS96, CK09}), solutions of $(St_1)$ are shown to develop discontinuity and non-uniqueness in finite time, even in one space dimension and with smooth initial data (\cite{She70, CK12}). For higher dimensions, the global existence of weak solutions for $(St_1)$ has stayed largely open. 

\medskip

It is natural to consider these problems 
as generated by a particle system, which diffuses but stops its motion when it hits the interface between water and ice. In one space dimension there has been several works
 which construct solutions of $(St_1)$ as a continuum limit of such particle systems, including \cite{CS96, CK08, LM15, DIRT15, DNS19}.  In higher dimensions the  global-time existence of solutions for $(St_1)$ has  remained open, due to the difficulty of  obtaining a stable deterministic limit \cite{NSZ21}. Such global-time existence, as well as a "weak-strong'' uniqueness" theorem is what we establish in this paper. Similar to aforementioned works, we also consider solutions based on Brownian particles with hitting times, but with optimization. Indeed a novelty of our approach lies in the optimization structure given by Problem \eqref{eqn:problem-upper}, for the choice of stopping time $\tau^*$ and the final target distribution $\nu^*$. 
   The monotonicity types \eqref{eqn:monotone-assumption} of the cost, (I), (II), correspond to $(St_1)$,  $(St_2)$, respectively. The optimality gives a certain `stability'' to  $(St1)$ when its initial domain is given by the solution of Problem~\eqref{eqn:problem-upper}.  
Roughly speaking, we will show the following:

\medskip

{\it For  any compactly supported $\eta_0\in L^{\infty}(\R^d) \cap L^1(\R^d)$, there is a class of  domains $E$ that include the support of $\eta_0$ such that, for each choice of $E$, there exists a unique global-time solution  $\eta$ of $(St_1)$ with initial data $\eta_0$ and initial domain $E:= \limsup_{t\to 0^+}\{\eta(\cdot,t)>0\}$.}

\medskip

The initial domains $E$ are obtained by our optimization problem \eqref{eqn:problem-upper},  giving an initial ``regularization" of the support of $\eta_0$, by means of an obstacle problem;  see Section~\ref{sec:global-Stefan}. {\red In particular,  
our initial domains $E$ are  strictly larger than the support of $\eta_0$.}  The point is that they are to be chosen appropriately, not arbitrarily given. This limitation of our approach  is natural though, given the stability of our solutions and the ill-posed nature of $(St_1)$. 
  For instance, we conjecture that the class of solutions we find in this paper do not include those with jump discontinuities observed in the aforementioned literature. In this sense our class of solutions can be understood as the "strong'' solutions of $(St_1)$ which allow uniqueness results, featuring weak-strong uniqueness. Indeed we will show that, when we do not choose the initial domain $E$ carefully, there can be infinitely many weak solutions of $(St_1)$, even in the  radially symmetric case: see Proposition~\ref{prop:non-uniquenses-St1}.    Regularity of our solutions, and potential extension of our approach to general, discontinuous solutions, remain to be exciting open questions. % to be investigated.
 
 \medskip

If we further require certain finite-time vanishing conditions on the solution, then the corresponding initial domain $E$ is uniquely determined, so is the solution. 
Below are particular examples of our solutions which are generated by specific choices of initial domain $E$: 
\begin{itemize}
\item[(a)](Theorem~\ref{Stefan:I} and Corollary~\ref{vanishing}) For compactly supported $\eta_0$ uniformly larger than $1$ in its support, there exists a unique weak solution of $(St_1)$ with initial data $\eta_0$ that vanishes in  finite time. 
\medskip
\item[(b)](Theorem~\ref{Stefan:I:2} and Corollary~\ref{unique_char_2}) For any compactly supported $\eta_0$, there exists a unique weak solution of $(St_1)$ with initial data $\eta_0$ 
that vanishes in finite time outside  the support of $\eta_0$ and never vanishes in the support of $\eta_0$.
\end{itemize}

\medskip 

Similar  results hold  for $(St_2)$ (Theorem~\ref{Stefan:II}).
These results rely on  a rigorous connection that we establish between the weak solution and the optimal Brownian stopping (see Theorems~\ref{thm:consistency} and \ref{reverse}).

\medskip

  While the cost types (I) and (II) in \eqref{eqn:monotone-assumption} generate different optimal stopping times,  and solutions to $(St_1)$ and $(St_2)$, respectively, we prove that their optimal  
 target measure $\nu^*$ is independent of the cost, or even of its monotonicity type (Theorem~\ref{thm:universality}). 
 This remarkable {\em universality} allows us to connect the two quite different problems $(St_1)$ and $(St_2)$ together; {\red in particular, the solutions to the problem $(St_2)$,  give us  the initial trace of the solutions of $(St_1)$  given in (a) and (b) above:  we refer to Section~\ref{sec:global-Stefan} for further discussions.}

\medskip

Understanding further regularity of the free boundary for the corresponding supercooled Stefan problem $(St_1)$, is a wide open problem. In this direction, we prove a strict monotonicity result for the barrier function $s$, a version of comparison principle, which may shed a light on the challenging question;  see Theorems~\ref{thm:s_mono} and \ref{thm:stric-s-mono}.  This  is a novel feature of Problem~\ref{prob:mainquestion} as it is a consequence of the optimization of the free target distribution.  

\medskip

As a byproduct of our approach we define the notion of {\em subharmonically generated sets} in Section~\ref{sec:subharmonic-generating}, which connects the Brownian stopping time and existence of the solution to the supercooled Stefan problem $(St_1)$. It is a pair $(\Sigma, E)$ of sets determined by a certain type of Brownian stopping time from an initial measure $\mu$; Definition~\ref{def:subharmonic-set}. The solvability of $(St_1)$ with the initial data $(\mu, E)$ is determined by whether there is $\Sigma$ such that $(\Sigma, E)$ is subharmonically generated from $\mu$; Theorem~\ref{thm:sh-gen-equiv-St1}. This notion is used to describe unique solvability of  $(St_1)$ and also to characterize vanishing in finite time solution of it in Section~\ref{sec:global-Stefan}; Theorem~\ref{thm:sh-generate} and Corollaries~\ref{vanishing} and \ref{unique_char_2}.

\subsection{The connection between the Stefan problem, Problem \eqref{eqn:problem-OSP}, and Problem~\ref{prob:mainquestion}} 

The present paper relies on the previous works on the optimal Skorokhod problem \eqref{eqn:problem-OSP}.
Notice that after getting the optimal target $\nu^*$ in Problem~\ref{prob:mainquestion}, finding the optimal stopping time $\tau^*$ is reduced to solving \eqref{eqn:problem-OSP} with fixed $\mu$ and $\nu^*$. 
  It is  shown  {\red first \cite{beiglboeck2017optimal}  then \cite{PDE19}} that the solution to \eqref{eqn:problem-OSP} is the first hitting time of $(W_t, t)$  to a space-time barrier $R^*$, which enjoys time-monotonicity. Such a barrier set can be defined by a barrier function $s^*$, such that $R^*:=\{ (x, t) \ | \ s^* (t) \le t\}$ for type (I) and  $R^*:=\{ (x, t) \ | \ s^* (t) \ge t\}$ for type (II). Our results above imply  that the free boundary of the Stefan problem, at time $t$, corresponds to the boundary of the time $t$-slice of $R^*$, that is, $\{ x \ | \ (x, t) \in R^*\}$. In fact, to derive our well-posedness of the Stefan problem $(St_1), (St_2)$, it is crucial for us to verify that $R^*$ is a closed set, so its complement $(R^*)^c$ is open.  This  is handled %can  be handled 
    by Theorem~\ref{thm:tau-equal-tau-U}, which connects the barrier set of $\tau^*$ to a closed set generated by the potential  function $U_{\mu_t}$ of the mass flow of $W_t$ under $\tau^*$. Interestingly its proof essentially uses the Eulerian method in \cite{PDE19}. We remark that in the type (I) case, Theorem~\ref{thm:tau-equal-tau-U} can be derived even in a sharper form, based on \cite{gassiat2021}; they showed  that the hitting time to a type (I) barrier set coincides with the stopping time generated by the corresponding potential flow, for a general class of stochastic processes \cite{gassiat2021}.

\medskip

The PDE methods of \cite{PDE19} is based on  the correspondence between stopping time $\tau$ and its Eulerian flow $(\eta, \rho)$ given on the space-time, that describes the distribution $\eta$ of $(W_t,t)$ before stopping by $\tau$, and the stopped distribution of mass $\rho$ of $(W_\tau, \tau)$. This $(\eta, \rho)$ solves weakly
$$
\eta_t - \frac{1}{2}\Delta \eta = -\rho.
$$
Moreover, under the assumptions \eqref{eqn:boundedness-assumption}-\eqref{eqn:monotone-assumption} the $(\eta, \rho)$ resulting from a solution of $\mathcal{P}(\mu, \nu)$ \eqref{eqn:problem-OSP} yields that $\rho$ is formally supported on the boundary of the active region $\{ \eta >0\}$ \cite{PDE19}.   Assuming sufficient regularity, $\rho$ is concentrated  %should be concentrated 
 on the boundary of the barrier set, which is also the boundary of the active region $\{\eta>0\}$. Thus $\rho = \nu(\chi_{\{\eta>0\}})_t$ or $\rho=  - \nu(\chi_{\{\eta>0\}})_t$ depending on the monotonicity of the set  $\{\eta>0\}$ over time, and we end up with 
the weighted Stefan problem, 
\begin{equation}\label{Stefan-weighted}
(\eta \pm \nu\chi_{\{\eta>0\}})_t - \frac{1}{2}\Delta\eta = 0, 
\end{equation} 
where $\nu$ is the  target measure for the optimal stopping time.
This equation has been formally considered for a given fixed target measure $\nu$ in \cite{PDE19}, and our Theorem~\ref{thm:consistency} and \ref{reverse} provide a rigorous justification.

\medskip

For the connection between Problem~\ref{prob:mainquestion} and the original Stefan problem $(St_1), (St_2)$, it is crucial to have 
the saturation result (Theorem~\ref{thm:saturation}) that 
gives $\nu=1$ (when $f\equiv 1$) in the active region; it reduces the above equation \eqref{Stefan-weighted} to $(St_1), (St_2)$ for type (I), (II), respectively.

 \subsection{Remarks on future work}

{\red  It should be mentioned that most of the methods in this paper carry over to diffusions with  uniformly elliptic generators, thus more general type Stefan problems; Brownian motion corresponds to the Laplacian, and more generally uniformly elliptic operators generate continuous Markov processes.   For instance our results can be extended to Riemannian manifolds and Laplace-Beltrami operators.
More interestingly, our method is robust enough that in an ongoing work in progress \cite{CKKN23} we treat the non-local Stefan problem, extending the Laplacian to the fractional Laplacian, where even the more stable problem $(St_2)$ is not well understood.} 

\medskip

 The main results in this paper appear rather surprising, given the lack of understanding in the literature on multi-dimensional solutions of $(St_1)$.  
In a subsequent work, we will further develop our method  to extend well-posedness theory for $(St_1)$ to more general initial boundaries. Also, the connection (Remark~\ref{rmk:universality-PDE})  between $(St_1)$ and $(St_2)$ that we find in this paper through the universality  result (Theorem~\ref{thm:universality}) seems to be unexpected, and understanding this connection at a more physical level, is  an interesting open problem. 
It would also be interesting to understand the physics behind why type (I) corresponds  to the supercooled unstable Stefan and type (II) to the usual stable Stefan problem.

\medskip

In this paper our method for finding the optimal target $\nu$ is solely based on optimizing with probability measures. Here we do not consider 
	duality which is a powerful tool for understanding optimal Brownian stopping, though we heavily rely on the results from \cite{PDE19} that are obtained via solving the dual problem of the fixed target case, to  find the barrier set of the optimal stoping time. 
	Our free target problem gives a new aspect to the duality that we will discuss in a subsequent work.

\section{Preliminaries}\label{sec:prelim}
{\red Throughout the paper, measurability means Borel measurability.
%Throughout
In} this section we assume for simplicity, that the measures $\mu$ and $\nu$ are compactly supported in $\R^d$. Much of the discussion (and notation) in this section is borrowed from \cite{PDE19} which gives foundational results for the development given in this paper. 
Note that the purpose of this section is to collect the precise definitions and results necessary for the discussion in the subsequent sections. We try to include the least amount of information. 

\subsection{Notation}
\begin{itemize}
 \item $W_t$ denotes the Brownian motion, while $W^y_t$ denotes the Brownian motion with $W_0=y$. 
 \item 
$B_r (x)$ denotes the ball of radius $r$ centered at $x$. 
\item $a \wedge b:= \min [a, b]$. 
\item For each Borel measurable set $S \subset \R^d$, $|S|$ denotes its $d$-dimensional Lebesgue measure. 
\item For each (nonnegative) Borel measure $\mu$ on $\R^d$, $|\mu|$ denotes $\mu(\R^d)$.
\item LSC = the set of lower semi-continuous functions on $\R^d$.
\item $H^1_0 =H_0^1(\R^d)$  
{\red 
\item $\Omega =C(\R_{\ge 0};\R^d)$ the space of continuous curves.
\item $\mathbb{P}^\mu=$ the Wiener measure on $\Omega$, with initial distribution $\mu$; this means  for our Brownian motion $W_t$, we have  $W_0 \sim \mu$. 
\item$ \{\mathcal{F}_t\}_{t\in \R_{\ge 0}=}$ the natural filtration of the Brownian motion.
\item $(\Omega, \mathcal{F}, \{\mathcal{F}_t\}_{t\in \R_{\ge 0}},\mathbb{P}^\mu)=$ the filtered probability space over $\Omega$, with the filtered $\sigma$-algebra $\mathcal{F}$ and the probability measure $\mathbb{P}^\mu$,
\item $\mathcal{M}(\R_{\ge 0})=$ the space of Radon measures on $\R_{\ge 0}$. 
}

\end{itemize}

\subsection{Randomized stopping times and optimal Skorokhod problem in  $\R^d$}
Related to Problem~\ref{prob:mainquestion}, it is important to understand the notion of stopping times and randomized stopping times. 

\subsubsection{Stopping times and randomized stopping times}
Consider the filtered probability space $(\Omega, \mathcal{F}, \{\mathcal{F}_t\}_{t\in \R_{\ge 0}},\mathbb{P}^\mu)$.
A {\em randomized stopping time} $\alpha$ \cite{Baxter-Chacon-randomized, Meyer-randomized} is a measure-valued random variable
$\alpha: \Omega \ni \gamma \mapsto \alpha_\gamma \in  \mathcal{M}(\R_{\ge 0})$
such that as a measure on $\R_{\ge 0}$,  
\begin{align*}
 \hbox{$\alpha_\gamma \geq 0, \,  \alpha_\gamma (\R_{\ge 0})=1$, and  the map $\gamma \mapsto \alpha_\gamma ([0,t])$  is   $\mathcal{F}_t$-measurable $ \forall\, t $.}
\end{align*}
{\red One can view $\alpha$ as a certain type of probability measure over $\Omega \times \R_{\ge 0}$ whose marginal on $\Omega$ is $\mathbb{P}^\mu$. }
  Along each Brownian path,  mass is dropped according to the distribution on $\R_{\ge 0}$ determined by the randomized stopping time.
A {\em stopping time} $\tau: \Omega \to \R_{\ge 0}$ is a random variable such that for each $t$, $\tau\wedge t$ is  $\mathcal{F}_t$-measurable; it  can be understood as  a special case of randomized stopping time, where the measure $\alpha_\gamma$ on each path $\gamma$ is a Dirac mass at the value $\tau (\gamma)$ in $\R_{\ge 0}$.   To distinguish the difference between randomized stopping time and stopping time, we often call the latter `nonrandomized'.
 {\red However, we often abuse notation and use $\tau$ for either stopping times or randomized stopping times, whenever its meaning is clear from the context; in the latter case we will also use the notation $\alpha$ for the actual measure corresponding to the randomized stopping time $\tau$. In fact, one may identify a randomized stopping time given as the measure $\alpha$ with a non-randomized stopping time, by extending the probability space $\Omega$ to $\Omega \times [0,1]$, augmented by the unit interval $[0,1]$ and its Lebesgue measure $Leb_{[0,1]}$. Here $\alpha$ induces a nonrandomized stopping time 
$\rho$, given as 
\begin{align}\label{eqn:nonrandomized}
 \rho (\gamma, u) = \inf\{ t \ge 0 \ | \ \alpha_\gamma ([0,t] \ge u\} \hbox{ for each $(\gamma, u) \in \Omega \times [0,1]$};
\end{align}
 see \cite[Section 3.2]{beiglboeck2017optimal}. This point of view allows us to treat  randomized stopping times as  usual (non-randomized) stopping times, and it enables us to treat expressions about probability of randomized stopping times as those about nonrandomized stopping times. 
 }

\medskip

We will say that a subset {\red  $Q\subset \Omega\times \R_{\ge 0}$ occurs almost surely} for a randomized stopping time $\alpha$ if
	$
		\mathbb{E}\left[\int_{\R_{\ge 0}} {\red  \mathbf{1}\{(\gamma, t)\not\in Q\}} d\alpha_\gamma(t) \right]=0;
	$
	{\red here the expectation $\mathbb{E}$ is with respect to the probability measure $\mathbb{P}^\mu$, that is, $\mathbb{E}=\mathbb{E}^{\mathbb{P}^\mu}$, and $\mathbf{1}$ is the indicator function of the set.}  
	{\red From the correspondence $\alpha \mapsto \rho$ as above, this is equivalent to 
	$\mathbb{E}\left[ \mathbf{1}\{ (\gamma, \rho(\gamma, u)) \not \in Q\} \right]  =0 $ where the expected value is with respect to $\mathbb{P}^\mu \otimes Leb_{[0,1]}$. \\
	For a (randomized) stopping time $\tau$ and a fixed time $t$, we will often use the expression 
\begin{align*}
\hbox{$t \le \tau$ almost surely (a.s.), } 
\end{align*}
	 which means that for the distribution $\alpha$ of $\tau$, % for each path $\gamma \in \Omega$, and 
\begin{align*}
\mathbb{E}\left[\int_{\R_{\ge 0}} {\red  \mathbf{1}\{(\gamma, s) \ | \ t \not\le s\}} d\alpha_\gamma(s) \right]=0,
\end{align*}
equivalently,
\begin{align*}
Prob\left[ t > \rho(\gamma, u)  \right]=0.
\end{align*}
}
We also say $\nu$ is the distribution of the  stopped Brownian motion at the (randomized) stopping time $\tau$, that is, $W_\tau \sim \nu$, if 
\begin{align*}
 	\mathbb{E} \big[g(W_\tau)\big] = \int g(z)d\nu(z),\ \forall \hbox{ continuous $g: \R^d \to \R$.}
\end{align*}
{\red Here, when $\tau$ is a randomized stopping time with the  distribution $\alpha$, the expected value 
$\mathbb{E} \big[g(W_\tau)\big] $ is defined as
\begin{align*}
 \mathbb{E} \big[g(W_\tau)\big]  := \E\left[\int_{\R_{\ge 0}} g (\gamma(s)) d\alpha_\gamma(s)\right], 
\end{align*}
equivalently,
\begin{align*}
  \mathbb{E} \big[g(W_\tau)\big]  :=  \mathbb{E} \big[g(\gamma(\rho(\gamma, u))\big] =\int_{\Omega \times [0,1]} g(\gamma(\rho(\gamma, u)) d\mathbb{P}^\mu\otimes Leb_{[0,1]}.
\end{align*}
}

\subsubsection{Subharmonic order and optimal Skorokhod problem}
Subharmonic order relates two measures {\red from the perspective of the} Brownian motion. 
First, $f$ is  a subharmonic function on an open set $O$ 
in $\R^d$ if it is upper-semicontinuous with values on $\R \cup \{-\infty\}$, and for every $x \in O$ and every closed ball $B$ in $O$ with center at $x$, it satisfies 
 $$f(x) \le \frac{1}{|B|}\int_B f(y)dy.$$
When $f$ is $C^2$, the latter condition is equivalent to $\Delta f \ge 0$ on $O$, where $\Delta=\sum_{i=1}^d \frac{\partial^2}{\partial x_i^2}$ is the Laplacian. The reason why we specify the domain $O$ is because subharmonic functions in $O$ may not necessarily be extended to the whole set $\R^d$.

\medskip

Two measures $\mu, \nu$ on $\R^d$ with $|\mu|=|\nu|$ is said to be in {\em subharmonic order} $\mu\le_{SH} \nu$, if for each open set $O$ containing $\mathop{supp} (\mu+\nu)$, 
\begin{align*}
\int \varphi (x) d\mu(x) \le \int \varphi (x) d\nu(x) \quad \hbox{ for every smooth subharmonic function on $O$.}
\end{align*}
See e.g. \cite[Definition 1.2]{GKL20}.
It is known (see e.g. \cite[Theorem 1.5]{GKL20}) that for compactly supported $\mu$ and $\nu$ as we assume throughout the paper,  we have  $\mu\le_{SH}\nu$ if and only if there exists a randomized stopping time $\tau$ for the Brownian motion such that  $W_0 \sim \mu$ and $W_\tau \sim \nu$ with $\mathbb{E}[\tau]<\infty$.  
For such $\mu$ and $\nu$, for any lower semicontinuous cost functional $\mathcal{C}$ for {\red randomized} stopping times,  one can find an optimal randomized time between $\mu$ and $\nu$ using the compactness of the space of randomized stopping times; this has been  {\red established} by Beigleb\"ock, Cox, and Huesmann \cite{beiglboeck2017optimal}. In fact, it is a nontrivial problem to see that such optimal randomized stopping time is indeed a stopping time (non-randomized) to solve the problem $\mathcal{P}(\mu, \nu)$ \eqref{eqn:problem-OSP}.  This has been proved under the assumptions on the cost   \eqref{eqn:boundedness-assumption} and \eqref{eqn:monotone-assumption} {\red  \cite{beiglboeck2017optimal, PDE19}; especially see \cite[Theorem 4.7]{PDE19}.}
(For other types of cost, for example the one based on the distance, $\mathcal{C}(\tau) =\mathbb{E}[|W_0 - W_\tau|]$, similar results have been proved in \cite{GKP3}; see also \cite{GKP4}.)
\begin{theorem}[{\rm Existence/Uniqueness of Optimal Skorokhod Problem \cite{beiglboeck2017optimal} \cite[Theorem 4.7]{PDE19}}]\label{thm:optimal-Skorokhod-existence}
Given compactly supported measures $\mu, \nu$ on $\R^d$ with $\mu\le_{SH}\nu$ and $\mu, \nu \ll Leb$, the optimal Skorokhod problem $\mathcal{P}(\mu, \nu)$ \eqref{eqn:problem-OSP}
with  the cost $\mathcal{C}$ in \eqref{eqn:Lag-cost}
 under the assumption   \eqref{eqn:boundedness-assumption} and \eqref{eqn:monotone-assumption}, 
 has a unique optimal stopping time $\tau^*$, which is not randomized  in the type (I) case,  and in the type (II) case, randomized only at $t=0$; in the latter case  $W_0=x$ and $\tau^*=0$ occurs with probability $\nu(x)/\mu(x)$. 
Moreover, there exists a space-time barrier set $R^* \in \R^d \times \R_{\ge 0}$ such that $\tau^*$ is given by the hitting time to $R^*$, that is, 
\begin{align*}
 \tau^* = \inf\{ t >0\ | \ (W_t, t) \in R^*\} \quad \hbox{(in type (II) case, it holds for those paths with  $\tau^*>0$)}.
\end{align*}
\end{theorem}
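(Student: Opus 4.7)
The plan is to combine the compactness-based existence argument with a swap-based structural analysis (the monotonicity principle of \cite{beiglboeck2017optimal}) to simultaneously force the optimum onto a hitting time of a space-time barrier.

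First, I would establish existence of an optimal randomized stopping time. The set of randomized stopping times with $W_0\sim\mu$ and $W_\tau\sim\nu$ is nonempty by the assumption $\mu\le_{SH}\nu$, and is compact in the Baxter--Chacon weak topology. Because $L$ is continuous and bounded (assumption \eqref{eqn:boundedness-assumption}), the functional $\alpha\mapsto \E[\int_0^\infty \int_0^t L(W_s,s)\,ds\,\alpha(dt)]$ is lower semi-continuous on this set, so a minimizer $\alpha^*$ exists. The finiteness of $\E[\tau^*]$ needed to apply Fubini comes from $\mu\le_{SH}\nu$ together with compact support.

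Next, I would apply a swap/monotonicity-principle argument to pin down the structure of $\alpha^*$. In type (I), suppose two paths $\gamma,\gamma'$ visit the same spatial location $x$ with $\gamma$ continuing past time $t$ (on the support of $\alpha^*$) while $\gamma'$ stops at some $t'>t$. Swapping their future behaviors preserves both the initial and target marginals and the martingale structure, but strictly lowers the cost because $L(\cdot,s)$ is strictly increasing in $s$ and the swap moves an integrand value at a later time to an earlier time. By Beiglb\"ock--Cox--Huesmann's monotonicity principle, such pairs must be absent $\alpha^*$-a.s., which forces $\alpha^*$ to be supported on a Root-type barrier: there exists a function $s^*:\R^d\to[0,\infty]$ so that $\tau^* = \inf\{t:(W_t,t)\in R^*\}$ with $R^*=\{(x,t):t\ge s^*(x)\}$. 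In particular the optimum is a genuine (non-randomized) stopping time. For type (II), the same swap principle with the inequality reversed (it is now cheaper to stop earlier) identifies the optimum as a Rost-type (reverse) barrier. The only mass that stops at $t=0$ consists of particles whose starting position $x$ lies in $\mathop{\rm supp}\nu$; to achieve the target $\nu$ with minimal cost we stop the maximum proportion immediately, which is $\nu(x)/\mu(x)$ (well-defined since $\mu,\nu\ll\mathrm{Leb}$). Beyond $t=0$ a second application of the swap principle rules out any further randomization.

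Finally, I would derive uniqueness from this barrier structure. If two admissible barriers $R^*_1,R^*_2$ both gave rise to hitting times achieving the infimum, then their common intersection and "earliest" envelope would again produce admissible stopping times; absolute continuity of $\mu$ and $\nu$ together with the fact that a Root (resp.\ Rost) barrier is determined by the target distribution it produces (a classical fact for Skorokhod embedding) force $R^*_1=R^*_2$ up to a set of zero Brownian-hitting measure. The existence of the barrier function $s^*$ follows by taking $s^*(x)=\inf\{t:(x,t)\in R^*\}$; its measurability and the closedness of $R^*$ are handled via standard hitting-time results, though here one can borrow directly from \cite{PDE19}.

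The main obstacle is the swap/monotonicity-principle step, because the combinatorial swap must respect both the filtration (so the swapped object is still a stopping time) and the martingale target constraint. Making this precise for randomized stopping times, and converting the pointwise "stop-go" exclusion into a barrier representation that also yields a measurable $s^*$, is the technical heart of the argument; this is exactly what is carried out in \cite{beiglboeck2017optimal} and refined via Eulerian PDE methods in \cite{PDE19}.
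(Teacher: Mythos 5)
The paper does not prove this theorem; it is imported from the cited references, and what the paper actually uses afterwards is the PDE19 route: the barrier $R^*$ is characterized through the value function $J_\psi$ produced by dynamic programming and the optimal dual $\psi^*\in LSC\cap H^1_0$, via $R^*=\{(x,t):J_{\psi^*}(t,x)=\psi^*(x)\}$. Your sketch instead reconstructs the Beiglb\"ock--Cox--Huesmann monotonicity-principle route (Lagrangian stop-go swaps on paths). Both are valid and both are in the cited sources, but they are genuinely different decompositions: the swap argument works pathwise and needs a delicate argument to convert the exclusion of stop-go pairs into a measurable time-monotone barrier, whereas the dual/HJB route works in Eulerian variables and delivers the barrier directly from the duality gap set, which is what the present paper leans on (e.g.\ in Section~\ref{sec:potential} and Theorem~\ref{thm:tau-equal-tau-U}). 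Neither route is ``the paper's proof'' in a literal sense, but if you want your sketch to match what the paper actually relies on, you should build it around $J_{\psi^*}$ rather than stop-go swaps.

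On the substance of your sketch, two points are weaker than they should be. First, your uniqueness step --- ``their common intersection and earliest envelope would again produce admissible stopping times'' --- does not work as stated: taking intersections or envelopes of two barriers changes the target distribution they embed, so the resulting hitting time is generally not admissible for $(\mu,\nu)$. The correct argument is a Loynes-type comparison (if a Root barrier $R_1$ is contained in $R_2$ and both embed $\nu$, then the potentials $U_{\mu_t}$ must coincide, forcing equality of the hitting times), or the potential-flow/obstacle-problem uniqueness this paper develops in Lemma~\ref{lem:unique-R}. Second, in the type~(II) part you assert the immediate-stopping fraction is $\nu(x)/\mu(x)$ ``because stopping the maximum proportion immediately is optimal''; the optimality of maximal immediate stopping needs an actual swap argument or a duality argument, and the formula should be read as $\min\{\nu(x)/\mu(x),1\}$ (i.e.\ the density of $\mu\wedge\nu$ against $\mu$), which is the only way the probability is well-defined. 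Your acknowledgement that making the swap rigorous and extracting a measurable barrier is ``the technical heart'' is fair --- that is precisely where a blind BCH reconstruction has to do real work.
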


For our purpose it is important to understand the barrier set $R^*$. 
It can be characterized by using the dual solutions of $\mathcal{P}(\mu, \nu)$.
 In \cite{PDE19}) they used dynamic programming principle to establish the duality:
 ${\mathcal P}(\mu,\nu)= 	{\mathcal D}(\mu,\nu) $ where 
$${\mathcal D}(\mu,\nu) := \sup_{\psi \in LSC}\Big\{\int_{\R^d}\psi(z)d\nu(z) - \int_{\R^d} J_\psi (x,0)d\mu(x)\Big\}.$$
Here, the function $(x, t)\mapsto J_\psi (x, t)$ is called the `value function' which is the result of dynamic programming: 
	\begin{align*} J_\psi(y, t) := \sup_{\sigma}\Big\{\mathbb{E}\Big[\psi(W^y_\sigma)- \int_0^\sigma L(t+ s,W^y_s)ds\Big]\Big\}
	\end{align*}
	where $\sigma$ are randomized stopping times. 
	{\red 
	The strict monotonicity assumption on $L$ \eqref{eqn:monotone-assumption} for type (I) and (II) implies (forward or backward) time-monotonicity of the function $J_\psi$. 
	}
Also, it is shown \cite{PDE19} that under the assumptions in Theorem~\ref{thm:optimal-Skorokhod-existence} (in fact for more general $\mu$ and $\nu$), there exists an optimal dual function 
$ \psi^* \in LSC\cap H^1_0$.  {\red Then the barrier set, the barrier function $s^*$, and the optimal stopping time $\tau^*$ of \eqref{eqn:problem-OSP}  are given by 
\begin{align}\label{eqn:prelim-optimal-barrier}
 & R^* = \{ (x,t)  \ | \ t \ge s^*(x) \} \hbox{ for type (I)}, \quad R^* = \{ (x,t)  \ | \ t \le s^*(x) \} \hbox{ for type (II)};\\\nonumber
  s^*(x) & = \inf\{t\in \R_{\ge 0}\  | \ J_{\psi^*}(x, t)=\psi^*(x)  \} \ \&  \ \tau^* = \inf \{ t>0 \ | \ t \ge s^*(W_t)  \} \quad   \text{ for type (I)}, \\\nonumber
  s^*(x) &= \sup\{t\in \R_{\ge 0} \ | \ J_{\psi^*}(x, t)=\psi^*(x)  \} \  \&  \  \tau^* = \inf \{ t >0\ | \ t \le s^*(W_t)  \} \quad    \text{ for type (II)}. 
\end{align}}
\subsection{Eulerian formulation}

It is important for us to relate the Brownian motion with stopping time $\tau$, with its corresponding mass flow, which we call Eulerian flow; considering such Eulerian formulation and effectively using it to analyze optimal stopping times is one of the main innovations of \cite{PDE19}. 
An Eulerian flow  is a pair of measures $(\eta, \rho)$ on $\R^d \times \R_{\ge 0}$ such that in the weak sense, 
   \begin{align}\label{eqn:EulerianPDE}
	& \rho(x,t)+\partial_t \eta(x,t)  
	=\frac{1}{2}\Delta \eta(x,t), \\ \nonumber
	& \int_{\R_{\ge 0}}d\rho = \nu, \qquad 
 \eta(x,0) = \mu (x).
\end{align}
For a more precise description, let $O\subset \R^d$ be a bounded open convex set that contains the supports of $\mu$ and $\nu$. Pick $\gamma<\lambda$ the Poincar\'{e} constant of $O$. 
We consider  		\begin{align*}
 			C_{-\gamma}(\overline{O} \times \R_{\ge 0} ):=\{ w \in C( \overline{O}\times \R_{\ge 0}) \ | \  \e{-\gamma t} w(x,t) \rightarrow 0  \hbox{ as $t\to \infty$,   uniformly in $x$} \}.
		\end{align*}
Its  dual space $\mathcal{M}_\gamma( \overline{O}\times \R_{\ge 0})$ 
	  is the finite Radon measures with $\gamma$-exponential decay {\red in time}.  
	  We let $C_{-\gamma}^{1,2}(\overline{O}\times \R_{\ge 0})$ denote the functions whose first derivative in time and second derivatives in space lie in $C_{-\gamma}( \overline{O}\times \R_{\ge 0})$.
Then, following \cite{PDE19} we shall  say that $(\eta,\rho)$ is {\em an admissible pair}, provided 
$\eta, \rho \in \mathcal{M}_\gamma(\overline{O}\times \R_{\ge 0})$ and  $\eta, \rho \geq0$, and they satisfy the following two equations: 
	\begin{align} \label{eqn:Eulerian_target} 
 			\int_{\overline{O}} u(z)\,d \nu(z) =&\ \int_{\overline{O}}\int_{\R_{\ge 0}} u(x)d{\rho}(x,t)\quad \hbox{for all $u\in C(\overline{O})$.}	
 	\end{align}
	\begin{align}\label{eqn:Skorokhod_evolution}
		-\int_{\overline{O}}w(y,0)\, d\mu(y)=&\ \int_{\overline{O}}\int_{\R_{\ge 0}}\Big[\frac{\partial}{\partial t}w(x,t)+\frac{1}{2}\Delta w(x,t)\Big]d\eta(x,t)\\
		&\ -\int_{\overline{O}}\int_{\R_{\ge 0}} w(x,t)d\rho(x, t) \quad \hbox{ for all $ w \in C_{-\gamma}^{1,2}( \overline{O}\times \R_{\ge 0})$}. \nn
	\end{align}

\medskip

Let us translate the Brownian stopping into Eulerian flow:  
\begin{proposition}[see \cite{PDE19}  Proposition 2.2]\label{thm:stochastic_embedding}
 		Given $\mu$ and $\nu$ compactly supported with $\mu, \nu \ll Leb$. 
		Suppose $\tau $ is a (randomized) stopping time with $W_0\sim \mu$, $W_\tau \sim \nu$. 
Then, 
	there is an admissible pair $(\eta,{\rho})$ of measures on $\R^d \times \R_{\ge 0}$, such that for every $g \in C_c (\R^d \times \R_{\ge 0})$ 
	$$
 			\E\big[g(W_\tau, \tau)\big]=\int_{\overline{O}}\int_{\R_{\ge0}}g(x,t){d\rho}(t,x),
 		$$
		and 
 	\begin{align}
 			\E\Big[\int_0^\tau g(W_t, t)dt\Big]   
 			=\ \int_{\overline{O}}\int_{\R_{\ge 0}}g(x,t)d\eta(x,t).\nn
 		\end{align}	
 	\end{proposition}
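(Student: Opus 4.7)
The plan is to take $(\eta,\rho)$ to be the canonical occupation and stopping measures associated with the pair $(W,\tau)$:
\[
\rho(A) := \mathbb{P}^\mu\big[(\tau,W_\tau)\in A\big], \qquad \eta(A) := \mathbb{E}^\mu\Big[\int_0^\tau \mathbf{1}_A(t,W_t)\,dt\Big],
\]
for Borel $A\subset \R_{\ge 0}\times \R^d$. With these definitions, the two explicit identities claimed in the statement are just their extensions from indicators to $g\in C_c(\R^d\times \R_{\ge 0})$ via the monotone class theorem. What remains is to verify the target marginal \eqref{eqn:Eulerian_target}, the distributional PDE \eqref{eqn:Skorokhod_evolution}, and the admissibility $\eta,\rho\in \M_\gamma(\R^+\times\overline{O})$.

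The target marginal \eqref{eqn:Eulerian_target} is immediate, since the spatial projection of $\rho$ is by construction the law of $W_\tau$, which equals $\nu$. The admissibility is also essentially automatic: $\int e^{-\gamma t}\,d\rho = \E^\mu[e^{-\gamma\tau}]\le 1$, and $\int e^{-\gamma t}\,d\eta\le \E^\mu[\tau]$; finiteness of $\E^\mu[\tau]$ follows from $\mu\le_{SH}\nu$ with compactly supported $\mu,\nu$, as recorded earlier in this section.

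For the evolution identity \eqref{eqn:Skorokhod_evolution}, I would apply It\^o's formula to $w(t,W_t)$ for $w\in C_{-\gamma}^{1,2}(\R^+\times\overline{O})$, obtaining
\[
w(\tau,W_\tau)-w(0,W_0) = \int_0^\tau \Big(\partial_t w+\tfrac12\Delta w\Big)(t,W_t)\,dt + \int_0^\tau \nabla w(t,W_t)\cdot dW_t,
\]
then take expectations so that the stochastic integral vanishes and recognize the three surviving expectations as the corresponding integrals against $\mu$, $\eta$, and $\rho$ under the defining identities for $\eta$ and $\rho$ already established. Rearranging produces exactly \eqref{eqn:Skorokhod_evolution}.

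The main technical obstacle is the rigorous justification of the It\^o step. The test function $w$ and its derivatives are only controlled by $e^{\gamma t}$ in time and are a priori unbounded in $x$, while Brownian paths may exit $\overline{O}$ before time $\tau$. My plan is the standard localization $\tau_n := \tau\wedge n\wedge \sigma_n$, with $\sigma_n$ the exit time of $W$ from $B_n(0)$: apply It\^o up to $\tau_n$, where all integrands are bounded and the stochastic integral is a true martingale; take expectations; and pass $n\to\infty$ using $\E^\mu[\tau]<\infty$, the $e^{\gamma t}$ growth embedded in the definition of $C_{-\gamma}^{1,2}$, and dominated convergence. The fact that paths may leave $\overline{O}$ before $\tau$ is accommodated by enlarging $O$ if necessary (the admissibility framework allows any bounded convex $O$ containing the supports of $\mu$ and $\nu$) and extending $w$ suitably so that the limiting integrals against $\eta,\rho$ restricted to $\overline{O}$ agree with those obtained from the localized It\^o identity. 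I expect this localization-plus-growth argument to be the step requiring care; once in place, the remaining manipulations are purely algebraic rewriting of expectations as integrals against $\mu$, $\eta$, and $\rho$.
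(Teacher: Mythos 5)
This proposition is cited directly from \cite{PDE19} (Proposition 2.2) and is not proved in the present paper, so there is no in-paper proof to compare against; I will assess your argument on its own.

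Your approach is correct and is the expected one: define $\rho$ as the law of $(\tau,W_\tau)$ and $\eta$ as the expected occupation measure up to $\tau$, extract the two stated identities by a monotone class argument, read off the target marginal, and obtain the evolution identity \eqref{eqn:Skorokhod_evolution} from It\^o's formula plus optional stopping. One remark that tightens your localization step: the spatial stopping $\sigma_n$ (exit from $B_n(0)$) is unnecessary. Since $\mu_t\le_{SH}\nu$ for every $t$ (apply optional stopping from $\tau\wedge t$ to $\tau$), and subharmonic order implies convex order, the law of $W_{\tau\wedge t}$ is supported in $\mathrm{conv}(\mathrm{supp}\,\nu)$ for every $t$; combined with continuity of the stopped paths, this shows $W_{\tau\wedge t}$ stays in $\mathrm{conv}(\mathrm{supp}\,\nu)\subset O$ almost surely, as the paper itself records in Remark~\ref{rem:uniform-bound-mu-t}. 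Thus there is no need to enlarge $O$ or cut off in space; localizing only in time with $\tau\wedge n$ suffices, and the passage $n\to\infty$ uses $\E[\tau]<\infty$ (which you can get directly from applying your It\^o identity to $|x|^2$, giving $\E[\tau]=\tfrac1d(\int|y|^2\,d\nu-\int|y|^2\,d\mu)$), the boundedness of $w$ and its derivatives on the compact set $\overline{O}$ for each fixed $t$, and the $e^{-\gamma t}$ decay built into $C_{-\gamma}^{1,2}$. With that cleanup the argument is complete.
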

\noindent This gives equivalence between the optimization problem for the stopping time and the one for the Eulerian flow \cite{PDE19}:
\begin{align*}
\mathcal{P}(\mu, \nu)=\mathcal{P}_1(\mu, \nu):= \inf_{(\eta, \rho)}   \int_{\R^d}\int_{\R_{\ge 0}}L\big(x,t) d\eta(x,t)
\end{align*}
\text{where $(\eta, \rho)$'s are  admissible pairs.}

\medskip

We close this section with an essential lemma for us to consider the Eulerian flow in the context of the barrier of the stopping time. It will be later used in Section~\ref{sec:potential}. 

\begin{lemma}[\cite{PDE19} Lemma 4.5]\label{lem:Eulerian_uniqueness}	
Suppose $\mu \ll Leb$ is  a compactly supported measure and $R \subset \R^d \times \R_{\ge 0}$. 
Let $(\eta, \rho)$ be admissible 
 with the condition  $\eta(R)=0$ and $\rho(R)=1$.
   
\begin{itemize}
 \item We suppose $R$ is a measurable forward-barrier, namely, $(x,r)\in R$ whenever $(x,t)\in R$ with $t\leq r$, and $(x,t)\in R$ if there is $(x, t_i)\in R$ with $t_i\rightarrow t$. 
		Then $(\eta, \rho)$ is unique. 

\item 
		Suppose instead  that $R$ is a measurable backward-barrier, namely, $(x,r)\in R$ whenever $(x,t)\in R$ with $t\geq r$, and $(x,t)\in R$ if there is $(x,t_i)\in R$ with $t_i\rightarrow t$. Then $(\eta,\rho)$  is uniquely determined {\red by the restriction of $\rho$} on the set  $\{(x,0) \ |  \ s(x)=0\}$ {\red where $s$ is given by $s(x) = \sup \{ t \ | \   (x, t) \in R \}.$ }

\end{itemize}
			\end{lemma}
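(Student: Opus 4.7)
The plan is to use the weak formulation \eqref{eqn:Skorokhod_evolution} as a duality pairing: by constructing test functions $w$ that vanish on $R$ while realizing prescribed source terms $\partial_t w+\tfrac12\Delta w$ supported in the interior of $R^c$, one can force the $\eta$-part of $(\eta,\rho)$ to be unique, and then $\rho$ is recovered from \eqref{eqn:Skorokhod_evolution} itself. By linearity of admissibility in $(\eta,\rho)$ at fixed $\mu$, it suffices to prove that any admissible $(\eta,\rho)$ with $\mu=0$ and the stated support conditions must vanish, subject to the $\{t=0,\,s(x)=0\}$ caveat in the backward case.

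In the forward-barrier case I would construct test functions via Feynman--Kac. Given a nonnegative smooth $\varphi$ compactly supported in the interior of $R^c$, set
\[
	w(x,t):=\E\Big[\int_{0}^{\tau_R^{(x,t)}}\varphi(t+u,x+W_u)\,du\Big]\ \text{on }R^c,\qquad w\equiv 0\ \text{on }R,
\]
with $\tau_R^{(x,t)}:=\inf\{u\ge 0:\,t+u\ge s(x+W_u)\}$. Formally, by Dynkin and the strong Markov property, $w$ satisfies $\partial_t w+\tfrac12\Delta w=-\varphi$ in the interior of $R^c$, vanishes on $R$, is bounded, and has compact time support. Granting that $w$ lies in the admissible test class, plugging it into \eqref{eqn:Skorokhod_evolution} with $\mu=0$ kills the $\rho$-pairing (since $w\equiv 0$ on $R\supset \mathop{supp}\rho$) and yields $\iint\varphi\,d\eta=0$; combined with $\eta(R)=0$ this forces $\eta\equiv 0$, and then \eqref{eqn:Skorokhod_evolution} tested against any $w\in C^{1,2}_{-\gamma}$ gives $\rho\equiv 0$.

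The main obstacle is regularity: the barrier function $s$ is only measurable under the lemma's hypotheses, so the $w$ above is typically no better than continuous across $\partial R^c$ and fails to be $C^{1,2}$ there. I would resolve this by a two-step approximation, first replacing $R$ by a family of closed forward barriers $R_\varepsilon\supset R$ with smooth boundaries (e.g.\ by mollifying an appropriate regularization of $s$) and building Feynman--Kac functions $w_\varepsilon$ on $R_\varepsilon^c$, then additionally mollifying $w_\varepsilon$ slightly in space-time to produce a bona fide element $\tilde w_\varepsilon$ of $C^{1,2}_{-\gamma}$ still vanishing on a neighborhood of $R$. Feeding $\tilde w_\varepsilon$ into \eqref{eqn:Skorokhod_evolution} and letting $\varepsilon\to 0$---with the in-time closedness hypothesis on $R$ ensuring that $\partial R$ is $\eta$-negligible in the limit---recovers the desired identity $\iint\varphi\,d\eta=0$.

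For the backward-barrier case, the analogous argument applies to the expanding-in-time domain $R^c=\{t>s(x)\}$ via a time-reversed Feynman--Kac formula for $w$. The only new feature is the slice $\{(x,0):\,s(x)=0\}$: particles starting there may either stop at $t=0$ (contributing to $\rho|_{t=0}$) or be released into $R^c$ and stopped later, both possibilities being consistent with admissibility. Fixing the restriction of $\rho$ to that slice pins down the effective initial data of the heat equation on $R^c$ (namely $\mu$ minus the immediate-stopping mass), and the argument of the forward case then closes verbatim.
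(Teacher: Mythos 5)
This lemma is cited from \cite{PDE19}; the present paper does not reproduce the proof (the remark that follows the lemma only gives a heuristic—namely that $\eta$ solves a heat equation with Dirichlet data in $R^c$), so the comparison can only be against the spirit of that heuristic. Your duality/Feynman--Kac approach is exactly that spirit and the first three paragraphs are the right outline: pass to a signed difference (note: it is linearity of the weak formulation \eqref{eqn:Skorokhod_evolution} and of the support conditions that you are using, not ``linearity of admissibility''---the difference need not be nonnegative), and test against functions vanishing on $R$ with a prescribed source in $R^c$.

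The gap is in the regularization step, and it is a genuine one. When you take $R_\varepsilon\supset R$ with smooth boundary, the Feynman--Kac function $w_\varepsilon$ (extended by $0$ on $R_\varepsilon$) is continuous and vanishes on $\partial R_\varepsilon$, but its normal derivative on $\partial R_\varepsilon$ is generically nonzero. Mollifying by a kernel of width $\delta$ therefore produces a source $\partial_t\tilde w_\varepsilon+\tfrac12\Delta\tilde w_\varepsilon$ that contains a piece of size $O(1/\delta)$ concentrated in a $\delta$-collar of $\partial R_\varepsilon$, and that collar lies in $R^c$ (since $R_\varepsilon\supset R$), exactly where $\eta_1-\eta_2$ lives. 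Pairing with $\eta_1-\eta_2$ then leaves, in the $\delta\to0$ limit, a term proportional to $\int_{\partial R_\varepsilon}\partial_n w_\varepsilon\,(\eta_1-\eta_2)$, and there is no reason for this to vanish. Appealing to ``$\partial R$ is $\eta$-negligible'' does not close this: absolute continuity of $\eta_i$ (which, incidentally, is not a hypothesis of the lemma---only $\mu\ll Leb$ is assumed, so $\eta_i\ll Leb$ must itself be derived from the PDE) kills surface-supported singular measures, but the boundary term above is a bulk limit of the regularization, not a surface integral against $\eta$. One needs the quantitative decay of $\eta_1-\eta_2$ near $\partial R_\varepsilon$, which is not established. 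The paper's $H^1_0$-in-space regularity for $\eta$ (used in, e.g., the proof of Theorem~\ref{thm:tau-equal-tau-U}) is likely the ingredient that makes this sort of integration by parts legal in \cite{PDE19}: one only needs $w$ Lipschitz, and the cross term $\int\nabla w\cdot\nabla(\eta_1-\eta_2)$ then makes sense directly without forcing $w\in C^{1,2}$. If you reorganize the argument to test $H^1$-level functions against an $L^2_tH^1_{0,x}$ solution, rather than forcing a $C^{1,2}$ test function by mollification, the boundary-layer issue disappears. As written, however, the mollification step does not go through.
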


\begin{remark}
 Let us comment on why the additional condition $\eta (R) =0, \rho(R)=1$ to the corresponding PDE \eqref{eqn:Skorokhod_evolution} is sufficient to determine the unique solution. Because $\rho(R)=1$ for the probability measure $\rho$, we have $\rho(R^c)=0$. Therefore, the \eqref{eqn:Skorokhod_evolution} behaves like the heat equation for $\eta$ in $R^c$, with the Dirichlet condition which is due to $\eta (R)=0$; therefore the uniqueness is not surprising, especially if $R$ is closed (so $R^c$ is open). In fact we will show that we can take $R$ as a closed set {\red when $(\eta, \rho)$ comes from the optimal solution in Theorem~\ref{thm:optimal-Skorokhod-existence}}; see Section~\ref{sec:potential}, Theorem~\ref{thm:tau-equal-tau-U}.  
 \end{remark}

\section{Properties of  stopping times}\label{sec:hitting}
In this section we prove several basic but important results for the properties of hitting times as well as optimal stopping times.  We handle the general case where the barriers are  merely measurable sets, say, not open or closed. This may be of its own interest.
For the present paper, because of Section \ref{sec:potential} (see Thereom~\ref{thm:tau-equal-tau-U}),  it suffices to consider  only the case where the barrier set is closed, in which case some proofs of this section can possibly be simplified.  Results in this section may possibly be known to experts, however, we were unable to find a suitable reference.
{\red In this section, we focus on nonrandomized stopping times, and 
for a measurable set $G \subset \R^d$, we let $\tau_G$ dennote the first hitting time to $G$, that is, $$\tau_G:= \inf\{ t >0\  | W_t \in G \}.$$}

\subsection{Properties of stopping times with respect to a barrier set}\label{sec:properties-stopping-barrier}
We first recall the following result from potential theory:
\begin{lemma}\label{lem:harmonic_measure}
{\red  Let $\sigma$ denote the surface measure of the round sphere $\partial B_r$ of radius $r$ centered at $0$. 
 For $z \in \R^d$ consider the Brownian motion $W^z_t$ starting from $z$,  and the first hitting time $\tau_{\partial B_r}$ to the round sphere $\partial B_r$. Let $\mu_z$ be the distribution of $W_{\tau_{\partial B_r}}^z$ along $\partial B_r$, that is, $W_0 \sim \delta_z$, $W_{\tau_{\partial B_r}} \sim \mu_z$. 
}
Suppose $|z| \ge 2r$ or $|z| \le \frac12 r$. Then, there exists a universal constant $C>0$ depending only on the dimension,  such that the  Radom-Nikodym derivative satisfies
\begin{align*}
 \frac{d\mu_z}{d\sigma} \le {\red  \frac{C}{\sigma[\partial B_r]}.} 
\end{align*}
In particular, we have
$$Prob[W^z_{\tau_{\partial B_r}} \in E] \le C \frac{\sigma[E]}{\sigma[\partial B_r]} \hbox{ for  any } E \subset \partial B_r.$$
\end{lemma}
\begin{proof}
{\red Notice that $\mu_z$ is nothing but the harmonic measure along $\partial B_r$, generated by the point $z$.
Therefore,} the desired upper bound is  a standard result of potential theory.  This is a consequence of Dahlberg's Theorem \cite{D}.
\end{proof}

Using this  lemma we now prove a key lemma, which takes care of a subtle possibility that Brownian paths blocked by a barrier set may possibly drop mass in the barrier set; the lemma says it does not happen when the final distribution is absolutely continuous.

{\red Below we will consider the following assumptions on $\mu, \nu$ and $\tau$: 
\begin{equation}\label{assumption_measure}
\hbox{$\mu, \nu$ are probability measures such that } 
\mu \ll Leb \hbox{ and $\nu \in L^\infty$;} 
\end{equation} 
\begin{align}\label{eqn:assumption-tau}
 \hbox{
 $\tau$ is a (nonrandomized)
  stopping time with 
  $W_0\sim \mu$ and $W_\tau \sim \nu$.  }
\end{align}
}
\begin{lemma}\label{lem:zero_measure_G}
  {\red Let $\mu, \nu, \tau$ satisfy \eqref{assumption_measure} \eqref{eqn:assumption-tau}.}
{\red  Let $G$ be a measurable set, 
 and suppose} $\tau \le \tau_G$  and $\mu\wedge (\nu|_G) =0$. 
 Then 
  $\nu[G]=0$. 
\end{lemma}

\begin{remark}
 Notice that the condition {\red $\nu \in L^\infty$}, 
 especially $\nu \ll Leb$, is essential in Lemma~\ref{lem:zero_measure_G}. For example,  the distribution $\nu \sim W_\tau$ of the Brownian motion of the hitting time $\tau$ to a sphere $G$, does not satisfy the result of this lemma.
\end{remark}
\begin{proof}
Suppose $\nu[G] >0$ for contradiction. {\red Then, since $\nu \ll Leb$  by \eqref{assumption_measure},} there exists $\delta>0$ such that the set 
\begin{align*}
 G_\delta:= \{ z \in G \ | \  \frac{d\nu}{dx} \ge \delta \} \quad \hbox{has $|G_\delta|>0$.}
\end{align*}
{\red Use the Lebesgue differentiation theorem and find a point $x \in G_\delta$ of $G_\delta$, which is a Lebesgue point for both $\chi_{G_\delta}$ and the function $\frac{d\mu}{dx}$ (because almost every point $x$ are Lebesgue points of both functions). % where the Lebesgue density is $1$. 
  Then, from Lebesgue density property and the condition $\mu\wedge (\nu|_G) =0$, we see that }
 for each small $\epsilon >0$, there exists $\bar r>0$  (depending on $\epsilon$) such that % the following holds:
\begin{align*}
 & \nu[B_r(x)]\ge \delta/2 |B_r(x)|,  \qquad |G_\delta\cap B_{r}(x)| \ge (1-\epsilon)|B_{r}(x)|,\\
 & {\red \hbox{and } \mu[B_{r}(x)] \le \epsilon |B_{r}(x)|,} \qquad \text{ for all } 0< r \le \bar r.
\end{align*}
From the Fubini's theorem, there are  $0< r_1<  r_2 < \bar r $ with $8r_1 < r_2< 10 r_1 $ such that 
\begin{align*}
\mathcal{H}^{d-1} [G_\delta\cap \partial B_{r_i}(x)] \ge (1-5\epsilon)\mathcal{H}^{d-1} [\partial B_{r_i}(x)], \quad i=1,2,
\end{align*}
{\red for the $d-1$ dimensional Hausdorff measure $\mathcal{H}^{d-1}$.}

From Lemma \ref{lem:harmonic_measure},  for a dimensional constant $C$ we have
$$ Prob[ W_\tau \not\in \overline{B_{r_2} (x)}  \  | \ W_t \in B_{4r_1}(x)  \hbox{ for some }    t \le  \tau   ] \le C\epsilon,
$$
where the $\overline{B_{r_2} (x)}$ means the closure. 
This is because for the Brownian path from a point inside $B_{4r_1}(x)$ to go outside $B_{r_2}(x)$, it has to go through $\partial B_{r_2}(x)$ but avoiding hitting $G$, thus $G_\delta$; {\red here the condition $\tau \le \tau_G$ is crucial}. 
 {\red To give details, 
 since the ball $B_{4r_1}(x)$ is an open set and Brownian paths are continuous almost surely,  the above probability is the same as 
 $$ Prob[ W_\tau \not\in \overline{B_{r_2} (x)}  \  | \ W_t \in B_{4r_1}(x) \ \text{ for some $t\in \mathbb{Q}$ with }  t < \tau ].% \le C\epsilon.
$$
Now, give  an ordering to to $\mathbb{Q}$ as $\mathbb{Q} =\{ q_1, q_2, \cdots, \}$. Then, the event 
\begin{align*}
 \hbox{``$W_t \in B_{4r_1}(x) \ \text{ for some $t\in \mathbb{Q}$ with }  t < \tau$''}
\end{align*}
 can be decomposed into the disjoint $E_j$, $j=1,2,...$ defined as follows. Let $Q(t)$ be the statement ``$W_{t} \in B_{4r_1}{x}$ and  $t < \tau$''. Then define $E_j$'s inductively, 
\begin{align*}
 E_1:=\hbox{``$Q(q_1)$ holds.''}, \quad E_2 = \hbox{ ``$Q(q_2)$ holds but not $Q(q_1)$.''}, \quad \cdots 
\\
\cdots E_k : =  \hbox{ ``$Q(q_k)$ holds but none of $Q(q_1), ... , Q(q_{k-1})$."}, \cdots \cdots. 
\end{align*}
Then,  $Prob[ W_\tau \not\in \overline{B_{r_2} (x)}  \  |    \ E_k  ]  \le C\epsilon$ for each $k$. 
Therefore, 
\begin{align*}
 & Prob[ W_\tau \not\in \overline{B_{r_2} (x)}  \  | \ W_t \in B_{4r_1}(x) {\red \ \text{ for some }  t \in \mathbb{Q}\  \hbox{ with } \  t < \tau }  ] \\
 &= \frac{\sum_{k=1}^\infty Prob[ W_\tau \not\in \overline{B_{r_2} (x)}  \  |    \ E_k  ] Prob[E_k]}{\sum_{k=1}^\infty Prob[E_k]}\\
 & \le C\epsilon \hbox{ as desired.}
\end{align*}
 }
 From this  estimate we get 
\begin{align*}
  Prob[ W_\tau \in \overline{B_{r_2} (x)} \  | \  W_t \in B_{4r_1}(x) \ \text{ for some } t  \le \tau ] \ge 1-C \epsilon.
\end{align*}
By  a similar argument of using Lemma \ref{lem:harmonic_measure}, %the same lemma, 
we get 
\begin{align*}
 Prob[ W_\tau \in B_{ r_1}(x) \ | \   \ W_t \in B_{4r_1}(x) \setminus \overline{B_{2r_1}(x)} \ \text{ for some }  \ t \le \tau ] \le C \epsilon.
\end{align*}
Now let us consider
\begin{align*}
 \nu [B_{r_1}(x)] & = Prob[ W_\tau \in B_{r_1}(x) ]\\
 & \le Prob[   W_0 \in \overline {B_{2r_1} (x)} ] 
+ Prob[ W_\tau \in B_{r_1}(x) \ \& \ W_0 \not\in \overline{B_{2r_1} (x)} ]\\
& = I + II.
 \end{align*}
 {\red Notice that from our choice of $x$ and $\mu \ll Leb$,} 
 $$I =\mu[\overline{B_{2r_1}(x)}] = \mu[B_{2r_1}(x)] \le   \epsilon |B_{2r_1}(x)|.$$ On the other hand, from continuity of Brownian paths, for the Brownian path from a point outside $\overline{B_{2r_1}(x)}$ to arrive in $B_{ r_1}(x)$, it first needs to arrive in $B_{4r_1}(x) \setminus \overline{B_{2r_1}(x)}$. Therefore, 
 \begin{align*}
II &\le  Prob[ W_\tau \in B_{r_1}(x) \ \&  \ W_t \in B_{4r_1}(x) \setminus \overline{B_{2r_1}(x) }\ \text{ for some }  \ t \le \tau ] \\
& = Prob[ W_\tau \in B_{r_1}(x) \ | \   \ W_t \in B_{4r_1}(x) \setminus \overline{B_{2r_1}(x)} \ \text{ for some }  \ t \le \tau ]\\
& \qquad \times  Prob[\ W_t \in B_{4r_1}(x) \setminus \overline{B_{2r_1}(x) } \ \text{ for some }  \ t \le \tau ].
\end{align*}
Also, notice that 
\begin{align*}
 \nu[\overline{B_{r_2}(x)}] = Prob[W_\tau \in \overline{B_{r_2}(x)} ] & \ge Prob[W_\tau \in \overline{B_{r_2}(x)} \ | \ W_t \in B_{4r_1}(x) \text{ for some } t \le \tau]\\
 & \qquad \times Prob[W_t \in B_{4r_1}(x) \text{ for some } t \le \tau].
 \end{align*}
Combining all these with the previous probability estimates with $\epsilon$, we see that
\begin{align*}
 II \le \frac{ C \epsilon}{1-C\epsilon} \nu[ \overline{B_{r_2}(x)} ]
\end{align*}
which implies
\begin{align*}
 \nu [B_{ r_1}(x)] \  \le  \ I +II  \  \le \  \epsilon |B_{2r_1}(x)| + \frac{C\epsilon}{1-C\epsilon} \nu[\overline{ B_{r_2}(x)} ]. 
\end{align*}
Recall that  $\nu [B_{r_1}(x)] \ge \delta |B_{r_1}(x)|$, $\nu$ is bounded, and that the volumes $|B_{r_1}(x)|,$ $|B_{2r_1}(x)|,$ and  $|\overline{B_{r_2}(x)}|$ are all comparable up to a constant factor depending only on the dimension.  Then, we get a contradiction by letting $\epsilon \to 0$. This completes the proof. 
\end{proof}

There are several important consequences of Lemma~\ref{lem:zero_measure_G}. 
An immediate consequence is this.
\begin{corollary}\label{cor:zero_s_ae}
  {\red Let $\mu, \nu, \tau$ satisfy \eqref{assumption_measure} \eqref{eqn:assumption-tau}.}
 In addition, suppose that $\tau$ is  given as the first hitting time to a barrier function $s$ {\red (which is a measurable function $s: \R^d \to \R_{\ge 0} \cup \{\infty\}$)}  such that 
\begin{align*}
 \tau &= \inf\{ t >0\  | \ t \ge s(W_t) \} \hbox{ in type (I) case,}
 \\
 \tau &= \inf\{ t >0\  | \ t \le s(W_t) \} \hbox{ in type (II), respectively.}
\end{align*}
 Let  $G:= \{ x \ | \  s(x) =0\} $ in type (I) case, and $G:= \{ x \ | \  s(x) =\infty\} $ in type (II) case, respectively. If $\nu \wedge \mu =0$, then $\nu[G]=0$.
\end{corollary}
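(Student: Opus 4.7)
The plan is to apply Lemma~\ref{lem:zero_measure_G} directly to the set $G$ defined in the statement. To do so I need to verify the two hypotheses of that lemma: (i) the stopping time $\tau$ dominates the first hitting time $\tau_G$ in the sense that $\tau\le \tau_G$, and (ii) the singularity condition $\mu\wedge(\nu|_G)=0$.

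For (ii), the condition $\nu \le f$ from \eqref{assumption_measure} gives $\nu|_G \le f$ as measures on $\R^d$, and combined with the new hypothesis $f\wedge \mu=0$ we obtain
\[
\mu\wedge (\nu|_G)\ \le\ \mu\wedge f\ =\ 0.
\]
Hence (ii) is immediate and reduces entirely to the given singularity assumption.

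For (i), the argument proceeds pathwise. Fix a Brownian path $\gamma$ with hitting time $\tau_G(\gamma)$. If $\tau_G(\gamma)=\infty$, there is nothing to check. Otherwise $W_{\tau_G}(\gamma)\in G$, so in the type (I) case one has $s(W_{\tau_G}(\gamma))=0\le \tau_G(\gamma)$, and in the type (II) case one has $s(W_{\tau_G}(\gamma))=\infty\ge \tau_G(\gamma)$. In either case $(\tau_G(\gamma), W_{\tau_G}(\gamma))$ already belongs to the barrier set associated with $s$, so by the very definition of $\tau$ as the first time the path enters that set, $\tau(\gamma)\le \tau_G(\gamma)$. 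This is valid also in the type (II) randomized case, since randomization only happens at $t=0$ and the inequality still holds almost surely.

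Combining (i) and (ii), Lemma~\ref{lem:zero_measure_G} applies verbatim and yields $\nu[G]=0$. The corollary is essentially a direct consequence of the lemma, so there is no real obstacle; the only substantive step is recognizing that the density-bound hypothesis $\nu\le f$ together with $f\wedge\mu=0$ is exactly what is needed to upgrade the singularity between $\nu|_G$ and $\mu$ required by the lemma.
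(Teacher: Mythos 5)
Your proof is correct and follows the same route as the paper's: you establish $\tau\le\tau_G$ directly from the definition of the hitting time and then invoke Lemma~\ref{lem:zero_measure_G}. You are slightly more careful than the paper's one-line proof in that you explicitly verify the hypothesis $\mu\wedge(\nu|_G)=0$ via $\nu|_G\le f$ and $f\wedge\mu=0$, which the paper leaves implicit.
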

\begin{proof}
 Notice that $\tau \le \tau_G$ 
   from the definition of $\tau$ and $\tau_G$. Therefore, the result follows from Lemma~\ref{lem:zero_measure_G}.
\end{proof}

Later  in this  {\red paper (e.g. Section~\ref{sec:saturation})}  we will {\red use} another technical fact regarding  stopping times.

\begin{lemma}\label{lem:tau_le_tauG}  
Let $\tau$ be a  stopping time of the Brownian motion $W_t$. 
Suppose that $G$ is a (Borel) measurable set  in $\R^d$ {\red with $|G| >0$}, 
and every point of $G$ has Lebesgue density larger than  a constant $a>0$. {\red Suppose there is  a countable and dense subset $D$} of $(0,\infty)$ such that 
\begin{align}\label{eqn:slice_zero_prob}
   Prob[W_{\bar t} \in G \ \& \ \bar t < \tau] =0 \quad \hbox{ for each {\red fixed} } \bar t\in D.
\end{align}
 Then, {\red $\tau \le \tau_G$ 
 almost surely. }
\end{lemma}
\begin{proof}  
{\red
{\bf Step 1.} We first prove 
\begin{align*}
 &Prob[ \exists t'' \in D  \hbox{ such that $\tau_G<t'' < \tau$ and $W_{t''} \in G $} \ | \  \ \tau_G < \tau \ ] =1.
 \end{align*}
 To see this, observe 
from the definition of the hitting time $\tau_G$, 
 that for each $0<\delta$,  
\begin{align*}
 (*) \quad \cdots \cdots \qquad Prob[ \exists t'  \hbox{ such that $t\le t'< t+\delta/2 $ and $W_{t'} \in G $} \ | \  (W_{\tau_G}, \tau_G) =(x, t)] =1.
\end{align*}
(Notice that $(*)$ may not hold if we require $t <t'$ instead of $t\le t'$.)\\
Moreover, at each $y \in G$, there is $r_y>0$ such that for every $0< r <r_y$, 
\begin{align*}
|B_r(y) \cap G| \ge a/2 |B_r(y)|.
\end{align*}
For each fixed $t''> t'$, and  for given $y=W_{t'}$, the probability density $\sigma$ for $W_{t''}$  is nothing but the heat kernel  at  $y$ with time $t''-t'$, 
that is, 
\begin{align*}
 \sigma (z)  = \frac{C_d}{{(t''-t')}^{d/2}} e^{-|z-y|^2/(t''-t')}.
 \end{align*}
 Therefore, by choosing $t'' \in D$ close to $t'$ (possible because of denseness of $D$) that $t'' -t' =r^2$ for $r \ll r_y$, we see that for $y \in G$,
\begin{align*}
 Prob[ W_{t''} \in G  \ | \ W_{t'} =y ] \ge C. 
 \end{align*}
for $C=C(d,a)>0$; this is for each $r \ll r_y$. 
This means that, for each $\delta>0$, 
\begin{align*}
 Prob[ \exists t'' \in D \hbox{ such that  $t' < t'' < t' +\delta/2 $ and } W_{t''} \in G \ | \ W_{t'} =y ]  \ge C, 
\end{align*}
Then by applying the $0$-$1$ law, we get
for each $\delta>0$, 
\begin{align*}
 Prob[ \exists t'' \in D \hbox{ such that  $t'
 < t'' < t' +\delta/2 $ and } W_{t''} \in G \ | \ W_{t'} =y ]  =1.
\end{align*}
Combining this with the above conditional probability $(*)$, we get
 for each $\delta>0$,  
\begin{align*}
 Prob[ \exists t'' \in D  \hbox{ such that $t<t''< t+\delta $ and $W_{t''} \in G $} \ | \  (W_{\tau_G}, \tau_G) =(x, t)] =1.
\end{align*}
By the strong Markov property, this implies, for each $\delta>0$, 
\begin{align*}
 &Prob[ \exists t'' \in D  \hbox{ such that $t<t''< t+\delta $ and $W_{t''} \in G $} \ | \  (W_{\tau_G}, \tau_G) =(x, t) \ \& \ \ \ \tau_G < \tau \ ] =1.
 \end{align*}
Notice that 
\begin{align*}
Prob[\tau > t +\delta  \ | \   \  (W_{\tau_G}, \tau_G) =(x, t)\ \& \ \tau_G < \tau ] 
 \to  1 \hbox{ as $\delta \to 0$ for $\delta > 0$}.
\end{align*}
Therefore, 
\begin{align*}
 &Prob[ \exists t'' \in D  \hbox{ such that $t<t'' < \tau $ and $W_{t''} \in G $} \ | \  (W_{\tau_G}, \tau_G) =(x, t) \ \& \ \ \ \tau_G < \tau \ ] =1, 
 \end{align*}
which implies 
\begin{align*}
 &Prob[ \exists t'' \in D  \hbox{ such that $\tau_G<t'' < \tau $ and $W_{t''} \in G $} \ | \  \ \tau_G < \tau \ ] =1.
 \end{align*}

{\bf Step 2.}
From Step 1, we get 
\begin{align*}
 Prob[ \exists t'' \in D \ \hbox{\red such that } \  \tau_G <  t'' < \tau \ \& \ W_{t''} \in G]  =  Prob[\tau > \tau_G].
\end{align*}
 On the other hand, since $D$ is countable, we have
\begin{align*}
 Prob[ \exists t'' \in D \ \hbox{\red such that } \  \tau_G < t'' < \tau \ \& \ W_{t''} \in G] \le \sum_{\bar t \in D} Prob[ W_{\bar t} \in G \ \& \  \bar t < \tau].
\end{align*}
Therefore, if $  Prob[\tau > \tau_G]>0$, then 
 there exists $\bar t \in D$ such that 
\begin{align*}
 Prob[W_{\bar t} \in G \ \& \ \bar t < \tau] >0,
\end{align*}
contradicting \eqref{eqn:slice_zero_prob}. Therefore $  Prob[\tau > \tau_G]=0$ completing the proof.

}

\end{proof}

\begin{remark} 
 In Lemma~\ref{lem:tau_le_tauG} the assumption that $G$ consists of its Lebesgue point {\red (of positive density $>a$)} is essential. For example, let $d=1$  and $G_1:= {\red [-4, -3]\cup[3,4]}$, and consider
 $G:= \{0\} \cup G_1$. Let $\mu$ be the uniform distribution on the interval $[-1/2, 1/2]$. One can find a stopping time $\tau$ with %and its distribution $\nu$,
  $W_0\sim \mu$, $W_\tau\sim\nu$ such that $\nu$ is  supported   on the small intervals around the points $-3/2$ and $3/2$. % $[-3/2, 3/2]$.
  Then, because of the support of $\nu$, we have that $\tau \le \tau_{G_1}$. Therefore, for each $t>0$, the probability $Prob[W_t \in G \ \& \  t < \tau] =Prob[W_t =0 \ \& \  t < \tau] =0$. However, on the other hand, $\tau \not \le \tau_G$, as Brownian paths will pass through $0$ before $\tau$, with positive probability. 
\end{remark}

Lemma~\ref{lem:tau_le_tauG} together with Lemma~\ref{lem:zero_measure_G} give the following useful result. 

\begin{corollary}\label{cor:mass_below_s}
  {\red Let $\mu, \nu, \tau$ satisfy \eqref{assumption_measure} \eqref{eqn:assumption-tau},}
and let $G$ be a measurable subset of $\R^d$. 
Let $D \subset (0,\infty)$ be a countable dense subset such that \eqref{eqn:slice_zero_prob} holds.
Then either {\red $\mu \wedge (\nu |_G) >0$ } 
 or $\nu[G]=0$. 
\end{corollary}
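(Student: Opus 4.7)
The plan is to combine Lemmas~\ref{lem:zero_measure_G} and \ref{lem:tau_le_tauG}: the first lemma gives the conclusion $\nu[G]=0$ once we know $\tau\le \tau_G$ almost surely, while the second lemma produces exactly this sort of inequality—provided one works with a set satisfying a uniform Lebesgue density lower bound. Assume the supports of $\mu$ and $\nu|_G$ are disjoint (i.e.\ do not overlap); then $\mu\wedge(\nu|_G)=0$, and we wish to conclude $\nu[G]=0$.

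The main obstacle is that Lemma~\ref{lem:tau_le_tauG} requires every point of the target set to have Lebesgue density bounded below, whereas our $G$ is only assumed measurable. I would bridge this by passing to $G^{\ast}$, the set of Lebesgue density-$1$ points of $G$. Since $\nu\le f\,dx$ with $f$ bounded makes $\nu$ absolutely continuous, and since $|G\setminus G^{\ast}|=0$ by the Lebesgue density theorem, we have $\nu[G\setminus G^{\ast}]=0$, so it suffices to show $\nu[G^{\ast}]=0$. We may further assume $|G^{\ast}|>0$, for otherwise the conclusion is immediate. Because $G^{\ast}$ and $G$ differ by a Lebesgue-null set, each point of $G^{\ast}$ is also a density-$1$ point of $G^{\ast}$ itself, so the density hypothesis of Lemma~\ref{lem:tau_le_tauG} holds with, say, $a=1/2$.

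I would then check that the slice condition \eqref{eqn:slice_zero_prob} is inherited by $G^{\ast}$. This is immediate from the inclusion $G^{\ast}\subseteq G$:
\[
Prob[W_t\in G^{\ast}\ \&\ t<\tau]\le Prob[W_t\in G\ \&\ t<\tau]=0\qquad\text{for each }t\in D.
\]
Lemma~\ref{lem:tau_le_tauG} then applies and yields $\tau\le \tau_{G^{\ast}}$ almost surely. Although Lemma~\ref{lem:tau_le_tauG} is stated for a (non-randomized) stopping time, its proof uses only the strong Markov property applied path-by-path together with the density hypothesis, so it extends verbatim to randomized stopping times; alternatively, one may realize the randomized $\tau$ as a genuine stopping time on an enlarged filtration.

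With $\tau\le \tau_{G^{\ast}}$ established, the final step is to invoke Lemma~\ref{lem:zero_measure_G} with $G^{\ast}$ in place of $G$. The disjointness of supports for $\mu$ and $\nu|_G$ descends to $\mu$ and $\nu|_{G^{\ast}}$ because $\nu|_{G^{\ast}}\le \nu|_G$, so $\mu\wedge(\nu|_{G^{\ast}})=0$. All hypotheses are then satisfied and the lemma yields $\nu[G^{\ast}]=0$, hence $\nu[G]=0$. The only truly delicate point is the first reduction: recognizing that the Lebesgue density theorem, combined with the absolute continuity of $\nu$ coming from the constraint $\nu\le f$, allows replacement of an arbitrary measurable $G$ by a density-$1$ subset without losing any $\nu$-mass, thereby unlocking Lemma~\ref{lem:tau_le_tauG}.
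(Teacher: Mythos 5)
Your proof follows essentially the same route as the paper: pass to the density-one subset, verify the slice condition \eqref{eqn:slice_zero_prob} is inherited, apply Lemma~\ref{lem:tau_le_tauG} to obtain $\tau\le\tau_{G^*}$, and finish with Lemma~\ref{lem:zero_measure_G}. The one loose point—both in your write-up and arguably the paper's—is the claimed inclusion $G^*\subseteq G$: the set of all density-one points of $G$ may contain points of $G^c$, so one should either intersect with $G$ or note that $|G^*\setminus G|=0$ and use $W_t\ll\mathrm{Leb}$ for $t>0$ to get the same probability estimate; either repair is immediate.
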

\begin{proof}
The case $|G|=0$ is obvious since $\nu\ll Leb$. Let $\tilde G$ be the set of Lebesgue points of $G$.  Suppose $|G|>0$ and $\mu\wedge (\nu |_G)=0$. {\red Then, $|\tilde G|>0$ and  in $\mu \wedge  (\nu|_{\tilde{G}})=0$.} We will show that $\nu[G]=0$.   Since \eqref{eqn:slice_zero_prob} remains valid for $\tilde G$,  Lemma \ref{lem:tau_le_tauG} yields $\tau\le\tau_{\tilde G}$ (almost surely),  which 
 then implies  $\nu[\tilde G]=0$ by
 Lemma~\ref{lem:zero_measure_G} as $\mu \wedge (\nu|_{\tilde G}) =0$.  Since $\nu[G] =\nu[\tilde G]$ for $\nu \ll Leb$, this completes the proof. 
\end{proof}

\begin{remark}\label{rmk:1-cor:mass_below_s}
 Note that the  {\red condition $\nu \in L^\infty$,} 
 especially $\nu \ll Leb$,
  is essential in Corollary~\ref{cor:mass_below_s}. In particular, if $|G|=0$, then $Prob[W_{t} \in G \ \& \ t < \tau] =0$ for each constant $t>0$ when $W_0\sim \mu \ll Leb$. So, if a singular measure $\nu$ supported on such a zero Lebesgue measure set, for example the distribution $\nu \sim W_\tau$  for the hitting time $\tau$ to a sphere, does not satisfy the result of this Corollary.
\end{remark}

{\red 
\begin{remark}\label{rmk:2-cor:mass_below_s} This corollary will be used in Section~\ref{sec:saturation}, where we prove a saturation result of the density upper bound constraint for optimal target measures, in Theorem~\ref{thm:saturation}. However, one may bypass using this, by using the Eulerian formulation and closedness of the barrier set for the optimal stopping times in Section~\ref{sec:potential}; there the Proposition~\ref{lem:hitting_s}, which does not use this corollary or Lemma~\ref{lem:tau_le_tauG}  but Lemma~\ref{lem:zero_measure_G}, is used crucially, We still think Lemma~\ref{lem:tau_le_tauG} and Corollary~\ref{cor:mass_below_s} are of their own interest. 
\end{remark}
}

We now focus on stopping times  that are optimal solutions  in Theorem~\ref{thm:optimal-Skorokhod-existence}. Such a stopping time is given
as the hitting time to a barrier function $s^*: \R^d \to [0,\infty]$ {\red that is determined by the dual optimal solution}.  Lemma~\ref{lem:zero_measure_G} and its consequences give a useful characterization for such a stopping time,  which will play a key role later in the paper (see for instance  {\red Theorems~\ref{thm:tau-equal-tau-U}} and \ref{thm:consistency} %\textcolor{blue}{the LSC property} 
and  Section~\ref{sec:mono_barrier_function}). {\red Especially, the following proposition is crucial, which roughly speaking, endows a certain regularity to the set $\{(x, t) \ | \ t=s^*(x)\}$ from the condition  \eqref{assumption_measure}, by ensuring that the Brownian particles stop only at the set given as the graph of $s^*$, nor above or below (in $t$), % in its limit point, 
for instance.  Such a property does not hold in general, and in our case it comes from optimality that the barrier set and the corresponding stopping time are determined by solving \eqref{eqn:problem-OSP}.}
\begin{proposition}\label{lem:hitting_s}
 Let  $\mu, \nu $ satisfy  \eqref{assumption_measure}, and {\red let $\tau^*$, $R^*$, $s^*$ be as in Theorem~\ref{thm:optimal-Skorokhod-existence} and  \eqref{eqn:prelim-optimal-barrier}  with the same assumptions (on $\mu, \nu$), the optimal stopping time and the corresponding barrier set $R^*$ and the barrier function.  
 In the (II) case further assume that $\mu\wedge \nu=0$.}
 Then, 
\begin{equation}\label{stop}
\tau^*=s^*(W_{\tau^*}) \hbox{ almost surely.} 
\end{equation}
\end{proposition}
\begin{proof}
{\red First recall that, by \cite[Corollary 3.6]{PDE19}, the optimal stopping occurs only in the barrier set which is given by the optimal solution to the dual problem. More precisely, in \cite[Corollary 3.6]{PDE19},  for the dual optimal solution $\psi^*$,  it is shown that $J_{\psi^*} (W_{\tau^*}, \tau^* )= \psi^* (W_{\tau^*}) $ almost surely, equivalently, we get the following key property for us:
\begin{align*}
 \hbox{$(W_{\tau^*}, \tau^*) \in R^*$ almost surely.}
\end{align*}
} 
Thus, if \eqref{stop} is not true, then 
 \begin{align*}
 & Prob[\tau^* > s^*(W_{\tau^*})] >0 \hbox{ in the (I) case,} \\
& Prob[\tau^* < s^*(W_{\tau^*})] >0 \hbox{  in the (II) case, respectively, }
\end{align*}
 so 
 there exists $\mathbb{Q} \ni \bar t >0$ such that 
\begin{align*}
& \hbox{$Prob[\tau^*  > \bar t>   s^*(W_{\tau^*})] >0$ in the (I) case,}\\
 & \hbox{$Prob[\tau^*  < \bar t <   s^*(W_{\tau^*})] >0$ in the (II) case,   respectively.}
\end{align*}
 
Let us separate the (I) and (II) cases.

\medskip

1. (I) case.  Consider the set $S:=\{ x \ | \ s^*(x) < \bar t \}$ and the distribution $\nu_{\bar t}$ of $W_{\bar t\wedge\tau^*}$. Note that $W_{\bar t} \in S$ implies that $\tau^* \le \bar t$. Therefore, 
$W_{\bar t \wedge \tau^*} \in S$ is equivalent to ``$W_{\tau^*} \in S$ and $\tau^* \le \bar t$'', which is then equivalent to ``$s^*(W_{\tau^*}) < \bar t$ and $\tau^* \le \bar t$'', whose probability is strictly $<1$ due to  $Prob[\tau^*  > \bar t>   s^*(W_{\tau^*})] >0$. Therefore, $\nu_{\bar t} [S] <1$, thus the measure $\mu_{\bar t}:= \nu_{\bar t} - \nu_{\bar t}|_S$ has a positive total mass $|\mu_{\bar t}| >0$.\\
We then start the Brownian motion from the distribution $\mu_{\bar t}$ and initial time $
\bar t$; we immediately stop if $\tau^*\le \bar t$, otherwise we continue the Brownian motion until $\tau^*$; we call this stopping time $\bar \tau$. 
Let $\bar \nu$ denote that distribution of $W_{\bar \tau}$, then, from the condition $\nu \ll Leb$, we also have $\bar \nu \ll Leb$. Moreover,   from the definition of $S$ and the fact $Prob[\tau^*  > \bar t>   s^*(W_{\tau^*})] >0$, we have  $$\bar \nu [S]>0.$$ 
 On the other hand, let $\tau_S$ be the first hitting time to the set $S$ for the Brownian motion starting from the time $\bar t$ with the initial distribution $\mu_{\bar t}$.
Notice that if $W_t \in S$ (with $t\ge \bar t$) then $t\ge \bar t >  s^*(W_t)$ by the definition of $S$, therefore $\tau^* \le t$, thus $\bar \tau \le t$ from the construction of the stopping time $\bar \tau$. This implies that  $\bar \tau \le \tau_S$. 
 From  the construction of $\mu_{\bar t}$ we see that $\mu_{\bar t}\ll Leb$ and $\mu_{\bar t} \wedge \bar \nu|_S =0$. 
 Therefore from Lemma~\ref{lem:zero_measure_G} we see that $\bar \nu [ S] = 0$, a contradiction. This completes the proof in the (I) case.

\medskip

2. (II) case. 
Let $S=\{ x \ | \ s^*(x) > \bar t \}$. 
Let $\bar \tau= \bar t \wedge \tau^*$ and $\tau_S :=\inf \{ t \ | \ W_t \in S\}$. 
 Notice that obviously if $t \ge \bar t$, {\red then $t \ge \bar \tau$.} On the other hand, if $t < \bar t$ and $W_t \in S$ then $s^*(W_t) > t$ so {\red $t\ge \tau^*$.} These imply that $\bar \tau \le \tau_S$.  
 Moreover, let $\bar \nu$ be the distribution of $\bar \tau$, that is, $W_{\bar \tau} \sim \bar \nu$. 
Then from  $Prob[\tau^* < \bar t < s^*(W_{\tau^*})] >0 $  and the assumption $\mu \wedge \nu=0$ in this (II) case, 
there exists $S_1 \subset S$ such that $\mu [S_1] =0$ and $\bar \nu[S_1]>0$.
Since $\mu \wedge (\bar \nu|_{S_1})=0$,  and $\bar \tau \le \tau_S \le \tau_{S_1}$, 
we have from Lemma~\ref{lem:zero_measure_G}, that $\bar \nu[S_1] =0$, a contradiction. This completes the proof.
\end{proof}

\section{Optimal stopping times and potential flows}\label{sec:potential}
 
For each stopping time $\tau$ we can consider the time flow of  distribution $\mu_t$ of $W_{\tau\wedge t}$, $0\le t \le \infty$. {\red which then defines} potential functions by solving the corresponding Poisson equation. These potential functions can be used back to study the stopping times, which is especially beneficial because they have regularity properties coming from the elliptic regularity and Ito's formula. 
 Using potential functions to study stopping times has been considered in the literature starting from \cite{baxter-chacon} for existence of Brownian stopping times (Skorokhod problem), and more recently for analyzing optimal stopping problems (see \cite{cox2013root, gassiat2015root, deangelis2018}, for one space dimension, and \cite{PDE19, gassiat2021} for higher dimensions).
 
 \medskip

  The main result of this section (Theorem \ref{thm:tau-equal-tau-U})  shows that when $\tau$ 
  is {\red the} optimal stopping time in Theorem~\ref{thm:optimal-Skorokhod-existence},
  given {\red as} the hitting time to a barrier of type (I) or (II), the barrier  is  a closed set;  this  is a consequence from the correspondence between the  optimal stopping time $\tau$ and its potential flow. 
We remark that in the type (I) case, such correspondence (a sharper form than ours) is proved in \cite{gassiat2021} even for much more general class of stochastic processes.  Their proof does not use optimality while ours that covers both type (I) and (II), uses the optimality assumption in an essential way; in particular, we uses Proposition~\ref{lem:hitting_s}.
 
 \medskip
 
 Closednesss of the barrier set is a key fact that will be used in Section~\ref{sec:consistency-Stefan} where we establish the consistency of our probabilistic formulation of the Stefan problem with the PDE formulation.

\subsection{Potential flows, definition and regularity}
In this subsection we define the potential flow $U_{\mu_t}$ and establish its space-time continuity (Corollary \ref{cor:uniform-U}). 

\medskip

Let $N(y)$ be the Newtonian potential function on $\R^d$, namely
\begin{align*}
 N(y) :=  \begin{cases}
 \frac{1}{2} |y|  & \text{ $d=1$},\\
   2\pi  \log |y|   & \text{ $d=2$}, \\
 \frac{1}{d(2-d) \omega_d} |y|^{2-d}    & \text{otherwise},
\end{cases}
\end{align*}
where $\omega_d$ is the volume of the unit ball in $\R^d$  {\red so that} \begin{align*}
  \Delta N (y) = \delta_0.
\end{align*}
Given a measure $\mu$, define the potential $U_\mu$ {\red as} 
$$
U_{\mu} (x) := \int N (x - y) d\mu (y). 
$$
Since $\Delta U_{\mu} = \mu$,  {\red elliptic regularity} yields the following:
\begin{lemma}[Spatial continuity]\label{lem:x-continuity-U}
Assume that there are constants $M,R>0$ such that $\mu \le M$ and $\mu=0$ outside $B_R$.
Then $\|U_{\mu}\|_{C^{1,\alpha}(\R^d)} \leq C$ for any $0<\alpha<1$, with $C = C(\alpha,M, R)$.
\end{lemma}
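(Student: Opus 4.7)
The plan is to invoke classical elliptic regularity for the Poisson equation $\Delta U_\mu = \mu$, supplemented by direct kernel estimates outside the support of $\mu$. Since $\mu \in L^\infty(\R^d)$ is compactly supported in $B_R$, it lies in $L^p(\R^d)$ for every $p \in [1,\infty]$, with $L^p$-norm bounded by a constant depending only on $M$, $R$, and $p$. This is the input that drives the Calder\'on--Zygmund and Sobolev machinery.

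First I would verify the pointwise formula
\begin{equation*}
\nabla U_\mu(x) \,=\, \int \nabla N(x-y)\,\mu(y)\,dy
\end{equation*}
by differentiating under the integral sign; this is legitimate because $\nabla N(y) \sim |y|^{1-d}$ has a locally integrable singularity at the origin and $\mu$ is bounded with compact support. A direct estimate then gives
\begin{equation*}
\|\nabla U_\mu\|_{L^\infty(\R^d)} \,\leq\, M \sup_{x \in \R^d} \int_{B_R} |\nabla N(x-y)|\,dy \,\leq\, C(d,M,R),
\end{equation*}
and an analogous argument (interpreted with the appropriate subtraction in dimensions $d\leq 2$, where $N$ grows at infinity) controls $U_\mu$ itself on any bounded set.

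For the H\"older regularity of $\nabla U_\mu$ I would split $\R^d$ into $B_{2R}$ and its complement. On $B_{2R}$, since $\Delta U_\mu = \mu \in L^p(\R^d)$ with norm controlled by $M$ and $R$, the interior Calder\'on--Zygmund estimate yields $\|U_\mu\|_{W^{2,p}(B_{2R})} \leq C(p,M,R)$ for every finite $p$, and the Morrey--Sobolev embedding $W^{2,p}(B_{2R}) \hookrightarrow C^{1,\alpha}(\overline{B_{2R}})$, valid for $p > d/(1-\alpha)$, converts this into the desired H\"older bound on the interior region. On $\R^d \setminus B_{2R}$ the function $U_\mu$ is harmonic (since $\mu$ vanishes there), and the kernels $\nabla N(x-y)$ and $D^2 N(x-y)$ are smooth with quantitative polynomial decay whenever $|x-y| \geq R$; differentiating once more under the integral sign shows that $\nabla U_\mu$ is smooth on the exterior region with a uniform bound on its $C^\alpha$-seminorm.

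The main obstacle is essentially bookkeeping: one must patch the interior Sobolev / Calder\'on--Zygmund bound on $B_{2R}$ with the direct exterior kernel estimate into a single $C^{1,\alpha}(\R^d)$ bound whose constant depends only on $\alpha$, $M$, $R$, and the dimension, and one must track that the Sobolev constant depending on $p=p(\alpha)$ is harmless since $\alpha$ is fixed. No substantive new ideas are needed---this is just the classical $L^\infty \Rightarrow C^{1,\alpha}$ Schauder-type gain for the Newtonian potential applied to a bounded, compactly supported density.
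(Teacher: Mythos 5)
Your proof is correct and follows the same route as the paper, which simply cites standard elliptic regularity for the Poisson equation $\Delta U_\mu = \mu$; you have supplied the classical Calder\'on--Zygmund plus Sobolev-embedding bound on a neighborhood of $B_R$, combined with the direct decaying-kernel estimates on the exterior harmonic region, and patched them together. You are also right to flag that in dimensions $d \le 2$ the potential $U_\mu$ itself grows at infinity, so the $C^0$ piece of the stated $C^{1,\alpha}(\R^d)$ norm is, strictly speaking, infinite there; what is genuinely bounded (and what the paper actually uses downstream) are $\nabla U_\mu$ and its $\alpha$-H\"older seminorm.
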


\begin{remark}\label{rem:uniform-bound-mu-t}
 Our main focus is on  the stopping times  $\tau$ generating a compactly supported $\nu$, $W_\tau \sim \nu$, with the upper density constraint {\red $\nu \le f \in L^\infty$.} For such $\tau$,  the measure $\mu_t$, {\red that is} the distribution of $W_{\tau\wedge t} \sim \mu_t$,  is uniformly bounded with respect to $t$, as $\mu_t$ is bounded by the greater of the solution to the heat equation (with initial value $\mu$) and $f$. Also, the support of $\mu_t$ is contained in the convex hull of the support of $\nu$; {\red one can easily verify this from the subharmonic order $\mu_t  \le_{SH}  \nu$  by using linear functions as test functions $\phi$ for $\int \phi d\mu_t \le \int \phi d\nu$.}  
 In what follows we thus  focus on the case where the result of Lemma \ref{lem:x-continuity-U} holds. 
\end{remark}

We now recall a simple consequence of Ito's formula. For each $0\le g \in C^1_c (\R^d)$, consider a subharmonic function $u \in C^2$ such that $\Delta u =g$. Note that one can find such $u\in C^{2, \alpha}$ from elliptic regularity.
Using Ito's formula,  for  each stopping time $\sigma \ge 0$  we have
\begin{align*}
\mathbb{E}[ u (W_{\sigma})  \ | \ W_{0} = y  ]  - u(y) =  \mathbb{E}\left[ \int_0^\sigma   \frac{1}{2}\Delta u (W_t) dt \ \Big| \ W_0 =y \right] \\= \mathbb{E}\left[ \int_0^\sigma   \frac{1}{2} g (W_t) dt  \ \Big| \ W_0 =y \right].
\end{align*}
In particular, we can show 
\begin{lemma}\label{lem:Ito-closed-set}
Suppose that $\mu$ is a probability measure on $\R^d$ and $W_0 \sim \mu$ and $W_{\tau_i} \sim \nu_i$, $i=1,2$ and that the potentials $U_{\nu_1}, U_{\nu_2}$ are measurable functions.  Assume that $\tau_1 \le \tau_2$ {\red almost surely}. 
 Then, for each closed or open  set $E \subset \R^d$, we have for the characteristic function $\chi_E$ that 
\begin{equation}\label{formula:0}
 \int_E \left(U_{\nu_2} - U_{\nu_1}\right)(y) d y \,= \,\mathbb{E}\left[ \int_{\tau_1}^{\tau_2}   \frac{1}{2} \chi_E (W_t) dt\right].
\end{equation}

In general, for a measurable $E$, there exists a monotonically increasing sequence of compact sets $K_n \subseteq E$ with $\lim_{n\to\infty} |E\setminus K_n|=0$ and 
{monotonically decreasing sequence of open sets  with $E\subseteq O_n$ and $\lim_{n\to\infty} |O_n\setminus E|=0$} 
such that 
\begin{align*}
  \int_E \left(U_{\nu_2} - U_{\nu_1}\right)(y) d y  = \lim_{n\to \infty} \mathbb{E}\left[ \int_{\tau_1}^{\tau_2}   \frac{1}{2} \chi_{K_n} (W_t) dt   \right]
=  \mathbb{E}\left[ \int_{\tau_1}^{\tau_2}   \frac{1}{2} \lim_{n\to \infty}\chi_{K_n} (W_t) dt   \right],\\
  \int_E \left(U_{\nu_2} - U_{\nu_1}\right)(y) d y  = \lim_{n\to \infty} \mathbb{E}\left[ \int_{\tau_1}^{\tau_2}   \frac{1}{2} \chi_{O_n} (W_t) dt   \right]
=  \mathbb{E}\left[ \int_{\tau_1}^{\tau_2}   \frac{1}{2} \lim_{n\to \infty}\chi_{O_n} (W_t) dt   \right].
\end{align*}
\end{lemma}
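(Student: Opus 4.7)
The plan is to prove \eqref{formula:0} first for a smooth replacement $g\in C^1_c(\R^d)$ of $\chi_E$ via Ito's formula applied to the Newtonian potential $u=N*g$, then extend to $\chi_E$ for closed and open $E$ by monotone smooth approximation, and finally pass to arbitrary measurable $E$ using inner and outer regularity of Lebesgue measure.

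For $g\in C^1_c(\R^d)$ the convolution $u:=N*g$ lies in $C^2_{\mathrm{loc}}$ and satisfies $\Delta u=g$. Ito's formula applied between the stopping times $\tau_1\le\tau_2$, combined with the standard fact that the stochastic integral $\int_{\tau_1}^{\tau_2}\nabla u(W_t)\cdot dW_t$ has zero expectation (the required moment bound $\mathbb{E}[\tau_2]<\infty$ follows from compact support of $\nu_2$ via subharmonic order together with the setting of Remark \ref{rem:uniform-bound-mu-t}), yields
\[
\mathbb{E}[u(W_{\tau_2})]-\mathbb{E}[u(W_{\tau_1})]=\mathbb{E}\Big[\int_{\tau_1}^{\tau_2}\tfrac12\,g(W_t)\,dt\Big].
\]
Fubini's theorem together with the definitions of $u$ and $U_{\nu_i}$ gives $\mathbb{E}[u(W_{\tau_i})]=\int u\,d\nu_i=\int g(y)\,U_{\nu_i}(y)\,dy$, valid because $g$ has compact support and $U_{\nu_i}$ is locally bounded by Lemma \ref{lem:x-continuity-U}; this produces \eqref{formula:0} with $g$ in place of $\chi_E$.

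To reach $\chi_E$ for $E$ open, I would approximate from below by $g_n\in C^1_c$ with $g_n\uparrow\chi_E$ (built by mollifying indicators of a compactly exhausting family), and for $E$ closed from above by $g_n\downarrow\chi_E$; when $E$ is unbounded, cancellation in the Newtonian kernel (coming from $|\nu_1|=|\nu_2|$) forces $U_{\nu_2}-U_{\nu_1}$ to decay at infinity, so one can localize to a fixed large ball containing $\mathrm{supp}\,\nu_1\cup\mathrm{supp}\,\nu_2$. Passing to the limit by monotone convergence on the right and dominated convergence on the left (using local $L^\infty$ control from Lemma \ref{lem:x-continuity-U}) gives \eqref{formula:0} for closed or open $E$. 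For a general measurable $E$, inner regularity furnishes compact $K_n\subseteq E$ with $|E\setminus K_n|\to 0$, and outer regularity furnishes open $O_n\supseteq E$ with $|O_n\setminus E|\to 0$; applying the closed/open case to $K_n$ and $O_n$ and taking limits produces the two claimed identities. The main technical point I anticipate is the interchange of limit with expectation on the right-hand side, which via Tonelli on $\mathbb{P}\otimes dt$ reduces to showing that the expected occupation time of $W_t$ in $E\setminus K_n$ (respectively $O_n\setminus E$) on the interval $[\tau_1,\tau_2]$ vanishes; this follows from the absolute continuity of the Brownian occupation measure with respect to Lebesgue measure combined with $|E\setminus K_n|\to 0$, and can also be read off directly from the already-established left-hand identity since $\int_{E\setminus K_n}(U_{\nu_2}-U_{\nu_1})\,dy\to 0$ by dominated convergence.
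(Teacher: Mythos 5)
Your proposal is correct and follows essentially the same route as the paper: Ito's formula for $u=N*g$ with $g\in C^1_c$, monotone smooth approximation of $\chi_E$ for closed or open $E$, then inner/outer regularity of Lebesgue measure for general measurable $E$. One small imprecision worth flagging: for unbounded $E$, the decay of $U_{\nu_2}-U_{\nu_1}$ at infinity (of order $|y|^{1-d}$) is not by itself enough to justify localizing to a fixed ball, since that rate is not integrable over $\R^d$; the paper instead uses monotone exhaustion by $E\cap B_R$ together with the nonnegativity of $U_{\nu_2}-U_{\nu_1}$, and your fixed-ball localization can be salvaged by noting that the paths $W_t$, $\tau_1\le t\le\tau_2$, remain almost surely in the convex hull of $\mathrm{supp}\,\nu_2$ and that $U_{\nu_2}-U_{\nu_1}$ in fact vanishes identically off that hull.
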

\begin{proof}

{\bf Step 1.} First let us consider the case where $E$ is compact or open {\red and bounded}. 
For a compact set $E=K$, there exists a monotonically decreasing sequence $0\le g_k \in C^\infty_c(\R^d)$, point-wisely converging to $\chi_K$, that is, $\chi_K =\inf_k g_k$. 
 Indeed, take a continuous function $h_k$ that equals $d(x, K)$ if $d(x,K) \leq 1/k$, $1$ if $ d(x,K) \geq 2/k$, and the linear interpolation, {\red $ (k-1)(d(x,K)-1/k) + 1/k$,}  for $1/k \leq d(x,K)\leq 2/k$.  After mollifying, this generates a monotone increasing sequence of $C^1$ functions that converges to $1-\chi_{K}$.
When $E=O$ is open and bounded, we can modify the above construction for $K= (\R^d \setminus O) \cap B_r$ for $r\gg 1$ to find the corresponding $\{g_k\}_k$ converging to $\chi_O$.
 
\medskip

Now, consider the subharmonic functions {\red  $u_k \in C^\infty$ such that $\Delta u_k = g_k$; because $g_k \in C^\infty_c$ we have $u_k \in C^\infty$ from elliptic regularity.} 
We have 
\begin{align*}
\int_E \left(U_{\nu_2} - U_{\nu_1}\right)(y) d y & = \lim_{k\to \infty} \int \left(U_{\nu_2} - U_{\nu_1}\right)(y) g_k (y)d y\\
& \qquad \qquad \qquad  \hbox{ (by the monotone  convergence theorem)} \\
&= \lim_{k\to \infty}  \int \left (\nu_2 - \nu_1\right) (y) u_k  (y)  \, dy  \hbox{ (integration by parts)} \\
  &= \lim_{k\to \infty}  \mathbb{E}[ u_k (W_{\tau_2}) - u_k(W_{\tau_1})] \\
  &= \lim_{k\to \infty}  \mathbb{E}\left[ \int_{\tau_1}^{\tau_2}   \frac{1}{2} g_k (W_t) dt   \right]  \hbox{ (by Ito's formula)}
\end{align*}
where by the  monotone convergence theorem (applied to the Wiener measure on the path space as the monotone convergence $g_k \to \chi_{E}$ can be extended to the path space),   the last line is the same as 
\begin{align*}
 \mathbb{E}\left[ \int_{\tau_1}^{\tau_2}   \frac{1}{2} \chi_E (W_t) dt   \right].
\end{align*}

\medskip

{\bf Step 2.} Next for open  or cloesd $E$,  consider $E_r= E \cap B_r$. Notice $\chi_{E_r} \to \chi_E$ monotonically as $r \to \infty$. {\red We get \eqref{formula:0}  for $E_r$ from  Step 1,  and applying the monotone convergence theorem as  $r\to \infty$, we get it for $E$. }

\medskip

{\bf Step 3.} Now for a measurable $E$, 
one can find a sequence of increasing compact sets $K_1 \subseteq K_2 \subseteq K_3  \subseteq ....$ contained in $E$ such that $\lim_{n\to \infty}  |E\setminus K_n| =0$ and monotonically decreasing sequence of open sets  with $E \subseteq O_n$, $O_1\supseteq O_2 \supseteq O_3 \supseteq \cdots$, and $\lim_{n\to\infty} |O_n\setminus E|=0.$
Let $\{E_n\}$ denote either the sequence $\{K_n\}$ or $\{O_n\}$. 
Then, from the monotone convergence theorem applied to $\chi_{E_n}$, we have
\begin{align*}
 \int_E \left(U_{\nu_2} - U_{\nu_1}\right)(y) d y =\lim_{n\to \infty} \int_{E_n} \left(U_{\nu_2} - U_{\nu_1}\right)(y) d y.
\end{align*}
Since for the sets $ \int_{E_n} \left(U_{\nu_2} - U_{\nu_1}\right)(y) d y =  \mathbb{E}\left[ \int_{\tau_1}^{\tau_2}   \frac{1}{2} \chi_{E_n} (W_t) dt   \right]$ from the previous cases,  we get
\begin{align*}
  \int_E \left(U_{\nu_2} - U_{\nu_1}\right)(y) d y  =  \lim_{n\to \infty}\  \mathbb{E}\left[ \int_{\tau_1}^{\tau_2}   \frac{1}{2} \chi_{E_n} (W_t) dt   \right]
 = \mathbb{E}\left[ \int_{\tau_1}^{\tau_2}   \frac{1}{2} \lim_{n\to \infty}\chi_{E_n} (W_t) dt   \right]
\end{align*}
where for the last equality we applied the monotone convergence theorem to the Wiener measure on the path space as the monotone sequence  $\chi_{E_n}$ can be extended to the path space.
 This completes the proof. 
\end{proof}

The following is a standard fact, but we provide its proof for clarity.
\begin{corollary}\label{lem:U-stopping-strict} 
 Suppose $\mu$ is a bounded, compactly supported measure on $\R^d$ and that  $\tau_1, \tau_2$ are stopping times with the given initial distribution $W_0 \sim \mu$ and with the respective final distribution $W_{\tau_i} \sim \nu_i$, $i=1,2$. Then
\begin{align*}
\hbox{ If $ \tau_1 \leq \tau_2$  almost surely, then $U_{\nu_1} \leq  U_{\nu_2}$ a.e.}.
\end{align*}
Moreover, $U_{\nu_1} = U_{\nu_2}$  a.e. if and only if $\tau_1=\tau_2$ almost surely.
\end{corollary}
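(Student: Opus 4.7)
The plan is to read off both halves from the identity in Lemma~\ref{lem:Ito-closed-set}.

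For the first (weak monotonicity) claim, I would apply Lemma~\ref{lem:Ito-closed-set} to the open ball $E=B_r(x)$, which gives
\[
 \int_{B_r(x)} \bigl(U_{\nu_2}-U_{\nu_1}\bigr)(y)\, dy \;=\; \mathbb{E}\!\left[\int_{\tau_1}^{\tau_2}\tfrac{1}{2}\chi_{B_r(x)}(W_t)\,dt\right]\;\ge\;0,
\]
the inequality using $\tau_1\le\tau_2$. Under the standing hypotheses (cf.\ Remark~\ref{rem:uniform-bound-mu-t}) the measures $\nu_i$ are bounded and compactly supported, so by Lemma~\ref{lem:x-continuity-U} the potentials $U_{\nu_i}$ are continuous. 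Dividing by $|B_r(x)|$ and sending $r\downarrow 0$ therefore yields $U_{\nu_2}(x)\ge U_{\nu_1}(x)$ at every $x$.

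For the equivalence, the direction $\tau_1=\tau_2$ a.s.\ $\Rightarrow$ $U_{\nu_1}=U_{\nu_2}$ is immediate since then $W_{\tau_1}=W_{\tau_2}$ a.s., so $\nu_1=\nu_2$. For the converse, suppose $U_{\nu_1}\equiv U_{\nu_2}$. Applying Lemma~\ref{lem:Ito-closed-set} to the open set $E=B_R$, for any $R>0$,
\[
 0\;=\;\int_{B_R}\bigl(U_{\nu_2}-U_{\nu_1}\bigr)(y)\,dy\;=\;\mathbb{E}\!\left[\int_{\tau_1}^{\tau_2}\tfrac{1}{2}\chi_{B_R}(W_t)\,dt\right].
\]
Letting $R\to\infty$, monotone convergence (applied on the path space of the Wiener measure, as in the proof of Lemma~\ref{lem:Ito-closed-set}) gives $\tfrac{1}{2}\mathbb{E}[\tau_2-\tau_1]=0$. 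Since $\tau_2\ge\tau_1$, this forces $\tau_1=\tau_2$ almost surely.

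There is no real obstacle here, the whole argument is an application of Lemma~\ref{lem:Ito-closed-set}; the only minor care needed is to justify pointwise evaluation of $U_{\nu_i}$ (achieved through the continuity provided by Lemma~\ref{lem:x-continuity-U} and Remark~\ref{rem:uniform-bound-mu-t}) and to invoke monotone convergence in $R$ for the equality case.
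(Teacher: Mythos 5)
Your proposal is correct and follows essentially the same route as the paper: both halves are read off from Lemma~\ref{lem:Ito-closed-set}, with the order part coming from averaging over small balls (the paper just says this follows "immediately," and you spell out the Lebesgue‑point / continuity step via Lemma~\ref{lem:x-continuity-U}), and the equality part coming from applying the identity with $E$ the whole space, where the paper takes $E=\R^d$ directly and you take $E=B_R$ and then $R\to\infty$ by monotone convergence — which is exactly how the Lemma itself handles unbounded sets. The only genuine addition is your explicit note of the trivial direction $\tau_1=\tau_2 \Rightarrow \nu_1=\nu_2$, which the paper omits.
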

\begin{proof} 
 The first statement follows immediately from  Lemma~\ref{lem:Ito-closed-set}. 
Next suppose $U_{\nu_1} = U_{\nu_2}$ {\red a.e.}. {\red Apply Lemma~\ref{lem:Ito-closed-set} to $E=\R^d$ and obtain}
\begin{align*}
 0=\int_{\R^d}  \left(U_{\nu_2} - U_{\nu_1}\right)(y) d y & = \mathbb{E}\left[ \int_{\tau_1}^{\tau_2}   \frac{1}{2}  dt   \right]  =\frac{1}{2} \mathbb{E}[\tau_2 - \tau_1]
\end{align*}
Hence $\mathbb{E}[\tau_2-\tau_1]=0$. Since $\tau_2 \ge \tau_1$, we conclude that $\tau_2 =\tau_1$ almost surely. 
\end{proof}
\begin{remark}
 Note that there are many stopping times with the same final distribution. Therefore, $U_{\nu_1}=U_{\nu_2}$ without an assumption like $\tau_1 \le \tau_2$ does not give much information. 
\end{remark}

Now we define our potential flow. 
\begin{definition}[Potential flow]\label{def:potential_flow}
For {\red compactly supported measures $\mu, \nu \in L^\infty(\R^d)$ with $|\mu| =|\nu|$, } let $\tau$ be a stopping time between $\mu$ and $\nu$, that is $W_0 \sim \mu$ and $W_\tau \sim \nu$. Let $\mu_t$  be the distribution of the Brownian stopping time $\tau\wedge t$, that is, $W_{\tau\wedge t} \sim \mu_t$. 
 We then call $\{U_{\mu_t}\}_{t>0}$ the {\em potential flow} associated with $\tau$.
\end{definition}
From Corollary~\ref{lem:U-stopping-strict} we immediately have 
\begin{corollary}\label{lem:mono-potential}
{\red Under the assumptions of Definition~\ref{def:potential_flow},}
\begin{itemize}
 \item[(a)] $U_{\mu_t}$ is monotonically nondecreasing in $t$;
 \item[(b)]  $U_{\mu} \leq U_{\mu_t}\le U_\nu$ for all $t$;
\end{itemize}
\end{corollary}
{\red 
\begin{proof}
 The items (a) and (b) are obvious from Corollary~\ref{lem:U-stopping-strict}, for under our assumptions and Lemma~\ref{lem:x-continuity-U}, the functions $U_{\mu_t}\, U_\mu, U_\nu$ are all continuous (in fact, $C^{1,\alpha}$ for some $\alpha>0$). 
\end{proof}
}

We now prove continuity of the potential flow in time.
\begin{lemma}[Time continuity]\label{lem:t-continuity-U}
Suppose $C:=\|\mu_t\|_{L^{\infty}(\R^d\times [0,\infty))} <\infty$, and that the support of $\mu_t$ is bounded uniformly in $t$.  Then $U_{\mu_t}$ is Lipschitz in time, more precisely,  for each $x$,
\begin{align*}
0\le  U_{\mu_{t'}} (x) - U_{\mu_t} (x) \le  \frac{C}{2} [t'-t] \quad \hbox{ for all $0\le t <  t'$}.
\end{align*}
\end{lemma}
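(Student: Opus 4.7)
The plan is to reduce the pointwise time-Lipschitz bound to the integrated identity \eqref{formula:0} from Lemma \ref{lem:Ito-closed-set}, and then recover pointwise information via the spatial regularity of Lemma \ref{lem:x-continuity-U}. The lower bound $U_{\mu_{t'}} \ge U_{\mu_t}$ is already contained in Corollary \ref{lem:mono-potential}(a) (since $\tau \wedge t \le \tau \wedge t'$), so only the upper bound needs to be proved.

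First, I apply Lemma \ref{lem:Ito-closed-set} with $\tau_1 = \tau \wedge t$, $\tau_2 = \tau \wedge t'$, and an arbitrary bounded measurable set $E$. This yields
\begin{align*}
\int_E \left( U_{\mu_{t'}} - U_{\mu_t} \right)(y)\, dy \;=\; \tfrac{1}{2}\, \mathbb{E}\!\left[ \int_{\tau\wedge t}^{\tau\wedge t'} \chi_E(W_s)\, ds \right].
\end{align*}
Next, by Fubini's theorem and the observation that on the event $\{\tau > s\}$ the integrand contributes only if $s \in [t, t']$,
\begin{align*}
\mathbb{E}\!\left[ \int_{\tau\wedge t}^{\tau\wedge t'} \chi_E(W_s)\, ds \right] \;=\; \int_t^{t'} P\bigl(W_s \in E,\ \tau > s\bigr)\, ds.
\end{align*}
The key estimate is then the pointwise inequality $P(W_s \in E,\ \tau > s) \le P(W_{\tau \wedge s} \in E) = \mu_s(E) \le C|E|$, where the last step uses the hypothesis $\|\mu_s\|_\infty \le C$ uniformly in $s$. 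Combining these gives
\begin{align*}
\int_E \left( U_{\mu_{t'}} - U_{\mu_t} \right)(y)\, dy \;\le\; \tfrac{C}{2}\, |E|\, (t' - t).
\end{align*}

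Finally, to pass from this integrated bound to the pointwise bound claimed in the lemma, I invoke Lemma \ref{lem:x-continuity-U} (using Remark \ref{rem:uniform-bound-mu-t}, which guarantees the uniform $L^\infty$ bound and uniform compact support of $\mu_t$): both $U_{\mu_{t'}}$ and $U_{\mu_t}$ are continuous functions on $\R^d$. Specializing $E = B_r(x)$, dividing by $|B_r(x)|$, and letting $r \to 0$ gives by Lebesgue's differentiation theorem the pointwise inequality $U_{\mu_{t'}}(x) - U_{\mu_t}(x) \le \tfrac{C}{2}(t' - t)$ at every $x$.

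No step is particularly delicate: the only thing to be careful about is the justification of Fubini in the second step (which requires nonnegativity of the integrand, automatic here) and the appeal to continuity to upgrade from an a.e. pointwise bound to an everywhere pointwise bound. Both are routine once Lemmas \ref{lem:x-continuity-U} and \ref{lem:Ito-closed-set} are in hand.
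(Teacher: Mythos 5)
Your proof is correct and follows essentially the same path as the paper's: apply Lemma \ref{lem:Ito-closed-set}, Fubini, the uniform bound $\mu_r[E]\le C|E|$, then specialize to $E=B_r(x)$ and shrink $r$. The only cosmetic difference is that you invoke Corollary \ref{lem:mono-potential}(a) for the lower bound and Lebesgue differentiation for the final limit, where the paper appeals directly to the spatial continuity of $U_{\mu_t}$ for both; since the potentials are continuous, the averages over $B_r(x)$ converge everywhere, so either phrasing is fine.
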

\begin{proof} 
Recall from Lemma~\ref{lem:Ito-closed-set} that for each open set $E$, 
\begin{align*}
 \int_{E} \left(U_{\mu_{t'}} - U_{\mu_t}\right)(y) d y & = \mathbb{E}\left[ \int_{\tau\wedge t}^{\tau\wedge t'}   \frac{1}{2} \chi_E (W_r) dr  \right] .
\end{align*}
The first inequality then follows from spatial continuity of $U_{\mu_t}$ {\red given in Lemma~\ref{lem:x-continuity-U}.} For the second inequality, 
apply Fubini in the above equality to obtain
\begin{equation}\label{formula:1}
\int_{E} \left(U_{\mu_{t'}} - U_{\mu_t}\right)(y) d y =\frac{1}{2} \int_{t}^{t'} Prob[ W_r \in E \ \& \  r < \tau ] dr.
\end{equation}
Notice that for each $r>0$, by the definition of $\mu_t$,
\begin{align*}
 Prob[ W_r \in E \ \& \  r < \tau ] \le \mu_r [E].
\end{align*}
Since $\mu_r \le C$, this and \eqref{formula:1} implies
\begin{align*}
 \frac{1}{|E|} \int_{E} \left(U_{\mu_{t'}}- U_{\mu_t}\right)(y) d y \le \frac{C}{2} (t'-t).
\end{align*}
Now set $E=B_r(x)$ and let $r\to 0^+$, which completes the proof.  
 \end{proof}

{\red From Lemma~\ref{lem:x-continuity-U} 
and Lemma~\ref{lem:t-continuity-U},} the following holds: 
\begin{corollary}[Space-time Lipschitzness of the potential]\label{cor:uniform-U}
 Assume that there exists a constant $C>0$ such that $\|\mu_t\|_\infty \le C$ for all $ t \ge 0$, and that the support of $\mu_t$ is bounded uniformly in $t$. Then,
 {\red  $U_{\mu_t}$ is uniformly $C^{1,\alpha}$ in space and Lipschitz in time in $\R^d  \times [0,\infty)$.} 
\end{corollary}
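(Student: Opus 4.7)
The plan is to read off this corollary from the preceding three results, with the uniform convergence being the only piece that requires a small extra argument. First I would apply Lemma~\ref{lem:x-continuity-U} to each $\mu_t$ with $M := C$ and the uniform support radius $R$ given by the hypotheses; since the Hölder constant there depends only on $\alpha$, $M$, $R$, this yields $\|U_{\mu_t}\|_{C^{1,\alpha}(\R^d)} \le C'$ uniformly in $t \ge 0$. The Lipschitz-in-time estimate $0 \le U_{\mu_{t'}}(x) - U_{\mu_t}(x) \le \tfrac{C}{2}(t'-t)$ is exactly Lemma~\ref{lem:t-continuity-U}, with a constant independent of $x$. Together these give the asserted joint $C^{1,\alpha}$-in-space and Lipschitz-in-time regularity on $\R^d \times [0,\infty)$.

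For the ``in particular'' clause, Corollary~\ref{lem:mono-potential}(a),(c) gives monotone pointwise convergence of $U_{\mu_t}$ to $U_\mu$ as $t \to 0^+$ and to $U_\nu$ as $t \to \infty$. Both limits are continuous: $\mu = \mu_0$ inherits the bound $C$, and $\nu \le C$ follows from passing to the weak limit of the $\mu_t$'s, so Lemma~\ref{lem:x-continuity-U} applies to $\mu$ and $\nu$ as well. The spatial equicontinuity from the uniform $C^{1,\alpha}$ bound, together with pointwise convergence to a continuous limit, then upgrades the convergence to uniform on every compact $K \subset \R^d$ (Dini's theorem, or equivalently Arzelà--Ascoli plus uniqueness of the limit). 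If the intended meaning is uniformity on all of $\R^d$, I would additionally exploit that $\nu - \mu_t$ is supported in $B_R$, has uniformly bounded total variation, vanishing total mass (stopping preserves mass), and vanishing first moment (by the optional stopping theorem for the martingale $W_{\tau \wedge t}$); writing
$$U_\nu(x) - U_{\mu_t}(x) = \int \bigl[N(x-y) - N(x) - \nabla N(x) \cdot y\bigr]\, d(\nu - \mu_t)(y),$$
the two subtracted terms integrate to zero against $\nu - \mu_t$ and the remainder vanishes as $|x| \to \infty$ uniformly in $t$, reducing uniformity on $\R^d$ to the compact case just handled.

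The main obstacle I anticipate, if uniformity on all of $\R^d$ is required, is the tail estimate in the low-dimensional regimes. For $d \ge 3$ the decay of $|\nabla N|$ and $|D^2 N|$ at infinity makes the bound routine; for $d = 2$ the logarithmic growth of $N$ forces careful use of both the zeroth- and first-moment cancellations; for $d = 1$ one can check by hand that $|x - y| - |x| + y\,\mathrm{sgn}(x) = 0$ whenever $|x| > |y|$, so the difference $U_\nu - U_{\mu_t}$ is in fact identically zero outside $B_R$, which is the cleanest of the three cases. If only locally uniform convergence is intended in the ``in particular'' statement, this obstacle disappears entirely and the Dini/Arzelà--Ascoli step alone finishes the proof.
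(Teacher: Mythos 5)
Your proof is correct and follows the same route as the paper, which gives no separate argument and simply cites Lemmas~\ref{lem:x-continuity-U} and \ref{lem:t-continuity-U} together with Corollary~\ref{lem:U-stopping-strict}; the ``in particular'' clause is the only piece needing further reasoning, and your Dini-plus-tail argument supplies it. Two minor remarks: for $t\to 0^+$ the Lipschitz estimate already gives $|U_{\mu_t}(x)-U_\mu(x)|\le \tfrac{C}{2}t$ uniformly in $x$, so no Dini or tail argument is needed for that limit; and in your displayed Taylor remainder the linear term should be $+\nabla N(x)\cdot y$ (the displacement is $-y$), though the sign is immaterial here since the first moment of $\nu-\mu_t$ vanishes in any case.
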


\subsection{Hitting times to monotone barriers and potential functions}
In this section we investigate the relation between the stopping time $\tau$ and 
the hitting time to a barrier set generated by the potential $U_{\mu_t}$.
 \begin{definition}\label{def:potential-barrier}
 For the potential flow $\{U_{\mu_t}\}_{t\geq0}$ given in Definition~\ref{def:potential_flow},  we define the forward/backward stopping times 
 \begin{align*}
 \tau^{U,f}  := \inf \{ t \ | \ U_{\mu_t}  (W_t) =  U_\nu(W_t)\}, \quad  \tau^{U,b}     := \inf \{ t > 0\ | \ U_{\mu_t}  (W_t)  =U_\mu(W_t) \}.
\end{align*}
{\red (Notice that the condition $t>0$ for $\tau^{U,b}$ is necessary: otherwise $\tau^{U,b} \equiv 0$.
)}
 We also define the corresponding barrier functions and the barrier sets:
$$
 s^{U,f} (x):= \inf \{ t \ | \  U_{\mu_t}(x) =U_\nu (x) \}, \,\,R^{U,f} : = \{ (x, t) \ | \ t \ge s^{U, f} (x)\} \\
$$
and
$$
s^{U,b} (x):= \sup \{ t  \ | \  U_{\mu_t}(x) =U_\mu (x) \}, \,\, R^{U,b} : = \{ (x, t) \ | \  t \le s^{U,b} (x)\}.
$$ 
\end{definition}
\medskip

It is important for us to observe the following: 
 \begin{lemma}\label{lem:Z-U-cloed}
Assume $\mu_t$ and its support are uniformly bounded for all {\red $0\le  t \le \infty$.} Then
\begin{itemize}
 \item[(a)]
  $s^{U,f}$ and $-s^{U,b}$ are lower semicontinuous. {\red Equivalently,}
the sets $R^{U, f}$ and  $R^{U,b}$ are closed in $\R^d \times [0,\infty]$;

 \item[(b)] we have
   \begin{align*}
 \tau^{U,f}  = \inf \{ t \ | \ s^{U, f} (W_t) \le t\}, \quad  \tau^{U,b}     = \inf \{ t \ | \ s^{U, b} (W_t) \ge t >0\}.
\end{align*}
\end{itemize}
\end{lemma}
\begin{proof}
These  follow from Corollary~\ref{cor:uniform-U} and time monotonicity of $U_{\mu_t}$ in Corollary~\ref{lem:mono-potential}(a)(b).
\end{proof}

\medskip

  The potential flow $U_{\mu_t}$ can be defined for any stopping time, even for randomized stopping times, so the hitting times $\tau^{U, f}, \tau^{U,b}$ can be associated to any (randomized) stopping time $\tau$. On the other hand it is easy to see that $\tau$ and either of $\tau^{U, f}, \tau^{U,b}$ do not coincide in general; even {\red for} a {\red hitting time} $\tau$, it will not be equal to $\tau^{U, f}, \tau^{U,b}$  {\red when its barrier is not monotone in time. }
  {\red For type (I) case, from the result of \cite{gassiat2021}, we have $\tau=\tau^{U,f}.$
To our knowledge {\red a similar result in such a generality} is not available for the type (II) case. 
}
  {\red When  $\tau=\tau^*$ is an optimal stopping time 
   (as in Theorem~\ref{thm:optimal-Skorokhod-existence}), 
  we verify that it is equal  to  $\tau^{U, f}, \tau^{U,b}$, for type (I), (II), respectively;} 
  this is proved in Theorem~\ref{thm:tau-equal-tau-U} below.   
{\red 
 It will then imply that the sets $R^{U, f}$ and  $R^{U,b}$, in type (I) and (II), respectively, give the barrier set of $\tau^*$. 
 Closedness of these barrier sets (Lemma~\ref{lem:Z-U-cloed} (a)) allows us to apply the maximum principle in the PDE formulation in Section~\ref{sec:consistency-Stefan}.
 \medskip
 }

Note that the equivalence between $\tau$ and  $\tau^{U, f}$ or $\tau^{U, b}$, is hinted in the formula \eqref{formula:0} which is a consequence of Ito's formula. One sees there that spending more time  for the Brownian motion increases the potential function, which is precisely controlled but, only in integral/expected value sense; it is not obvious how to use such a control on average quantities to get information for each {\red individual} Brownian path. 
{\red We overcome this difficulty by optimality of $\tau$, in particular, using Proposition~\ref{lem:hitting_s}.} 
\medskip

{\red Towards the main result of this section,} in the next proposition
we show that the barrier set $R$ 
 of $\tau$ (for example, the set $R^*$ for the optimal stopping time $\tau^*$ as in Section~\ref{sec:prelim}) {\red is contained, except a measure zero set, \red in the barrier set $R^{U,f}$, $R^{U,b}$, for  type (I),(II), respectively.}
 This is basically a result of the Ito's formula via Lemma~\ref{lem:Ito-closed-set}.

\begin{proposition}\label{lem:nu-s-less-s-u-zero}
Let $\mu,\nu$ and $\tau$ be as given in Definition~\ref{def:potential_flow}.  Suppose $\tau$ is characterized as {\red the hitting time}  
$\tau= \inf \{ t \ | \ t \ge s(W_t)\}$ for type (I), $\tau= \inf \{ t \ | \ 0<  t \le s(W_t)\}$ for  type (II), for a measurable function {\red $s:\R^d\to [0,\infty]$.}  Then we have
{\red 
$$
|\{ x  \in \R^d \ | \ s(x)  < s^{U, f}(x)\}|=0\hbox{  for } {\rm (I)} \,\,\hbox{  and  } |\{ x \in \R^d \ | \ s(x)  > s^{U, b}(x)\}|=0 \hbox{ for } {\rm (II)}.
$$ 
In particular, since $\nu \ll Leb$,}
$$
\nu[\{ x  \in \R^d \ | \ s(x)  < s^{U, f}(x)\}]=0\hbox{  for } {\rm (I)} \,\,\hbox{  and  }\nu[\{ x \in \R^d \ | \ s(x)  > s^{U, b}(x)\}]=0 \hbox{ for } {\rm (II)}.
$$ 

\end{proposition}
{\red 
\begin{proof}
We will prove for costs of type (II), since the argument is parallel for the other case. 

\medskip

Suppose not, that is, $|\{ s >  s^{U, b}\}|>0 $.  We can then find a constant $\bar t \in [0,\infty)$ such that 
$$
 |E| >0 \hbox{ where } E:= \{ x \ | \ s(x)  > \bar t >  s^{U, b}(x)\}.
$$ 
Let us consider
  $w(x, t) := U_{\mu_t} (x) - U_{\mu} (x)$.  {\red From the definition of $s^{U,b}$ and $E$ we have $w(\cdot , \bar t ) >0$ on $E$,   yielding}
  \begin{align*}
\int_{E} w(y, \bar t) dy >0.
\end{align*}
\medskip

From Lemma~\ref{lem:Ito-closed-set}, there exists a monotonically increasing sequence of compact sets $K_n \subseteq E$ such that 
\begin{align*}
  \int_E w(y, \bar t) dy &= \lim_{n\to \infty} \mathbb{E}\left[ \int_0^{\bar t \wedge \tau}   \frac{1}{2} \chi_{K_n} (W_t) dt \right].
  \end{align*}
  By Fubini's theorem, we thus can write
  \begin{align*}
  \int_E w(y, \bar t) dy   \,  = \lim_{n\to \infty} \frac{1}{2} \int_0^{\bar t} Prob[ W_r \in K_n \ \& \  r < \tau ] dr
   \,   \le \frac{1}{2} \int_0^{\bar t} Prob[ W_r \in E \ \& \  r < \tau ] dr.
    \end{align*}
On the other hand, recall that $\tau= \inf\{t \ | \ 0<  t\le s(W_t)\}$.  Hence {\red for $r \le \bar t$, if $W_r \in E$, %and $r\geq \bar t$,
 then $s(W_r) >  \bar t \geq r$ and thus  $\tau \leq r$. } Therefore
\begin{align*}
  Prob[ W_r \in E \ \& \  r < \tau ] =0 \hbox{ for $r \le \bar t$.}
\end{align*}
Back to the previous inequalities of integrals, we get
\begin{align*}
   \int_{E} w(y, \bar t) dy 
\le0,
\end{align*}
a contradiction, completing the proof. 
\end{proof}

}

\begin{definition}\label{potential_barrier}
We let $\tau^U$ commonly denote  $\tau^{U,f}$ and $\tau^{U,b}$ depending on whether $\mu_t$ is generated from  type (I) or (II) stopping time $\tau$. {\red Likewise we use $R^U$ for  the barrier sets $R^{U,f}$ and $R^{U,b}$, and $s^U$ for  $s^{U,f}$ and $s^{U,b}$.}
\end{definition}

\begin{remark}
 Proposition~\ref{lem:nu-s-less-s-u-zero} by itself may not give much information on the relation between $\tau$ and $\tau^U$, as the stopped distribution of $(W_\tau, \tau)$ in general can even be outside the barrier set $R$, in particular, if the barrier function has discontinuity in a dense subset of $\R^d$, of the Hausdorff dimension $d-1$. \end{remark}

{ \red 
We are now ready to prove our main result of this section, which is the equivalence between $\tau$ and $\tau^U$.  Such a result is known for type (I)  \cite{gassiat2021}. 
We prove  the equivalence  for both type (I) and (II) by employing  
 the connection between 
the Brownian motion with stopping time  and  a parabolic flow, assuming optimality of the stopping time $\tau$. 
In Section~\ref{sec:consistency-Stefan} consistency results will be proved using a slightly weaker result,  namely,  that the set  $\{\eta>0\}$, for the Eulerian variable $(\eta, \rho)$, is open.

\begin{theorem}\label{thm:tau-equal-tau-U}[see also \cite{gassiat2021} for type (I)]. 
Use the same assumptions as in Propositions~\ref{lem:hitting_s} and \ref{lem:nu-s-less-s-u-zero}
; in particular, $\tau=\tau^*$ is an optimal stopping time of Theorem~\ref{thm:optimal-Skorokhod-existence}, and in the type (II) case $\mu\wedge \nu=0$. Then, 
\begin{itemize}
\item[(a)]
 for the Eulerian variable $(\eta, \rho)$ that corresponds to $\tau^*$ we have 
 \begin{align*}
 \{\eta=0\} = R^U,  \ \ {\red \hbox{in particular,  the set $\{ \eta >0\}$  is open;}}
 \end{align*} 
% without loss of generality
 \item[(b)]
 $\tau^* = \tau^U$ almost surely, in particular, 
the set $R^U$ gives a barrier for $\tau^*$ as a hitting time to it. 
 \end{itemize}
\end{theorem}
\begin{proof}
Because of Proposition~\ref{lem:nu-s-less-s-u-zero}, we have 
\begin{align*}
 s^*(W_{\tau^*}) \ge s^U(W_{\tau*}) \hbox{ a.s. for type (I)},  \quad s^*(W_{\tau^*}) \le s^U(W_{\tau*}) \hbox{ a.s. for type (II)}.
\end{align*}
This implies 
\begin{align*}
 \tau^* \ge \tau^U  \quad \&  \quad (W_{\tau^*}, \tau^*) \in R^U\cap R^* \quad \hbox{ a.s.,}
\end{align*}
  since $\tau^*=s^*(W_{\tau^*})$ a.s. from Proposition~\ref{lem:hitting_s}.
In particular, we can find a barrier set $R$ of $\tau^*$ such that $R \subseteq R^U$.  To see this, let $R = R^* \cap R^U$ and let $\tilde \tau$ be its hitting time. Since $(W_{\tau^*}, \tau^*) \in R^U\cap R^*$ we have $\tilde \tau \le \tau^*$ almost surely. 
On the other hand from the definition of hitting time we have $\tilde \tau \ge \max[\tau^* , \tau^U]$ while the latter is nothing but $\tau^*$ from $\tau^* \ge \tau^U$, so we conclude that $\tau^*$ is the hitting time $\tilde \tau$ to $R=R^*\cap R^U$.  Below we use this $R$. 
\medskip

From Proposition~\ref{thm:stochastic_embedding}, 
the stopping time $\tau^*$   induces its Eulerian flow $(\eta, \rho)$ 
 which satisfies \eqref{eqn:EulerianPDE} with $\eta[R]=0$ and $\rho[R]=1$; {\red in particular, $\rho$ is the distribution of $(W_{\tau^*}, \tau^*)$}. Furthermore from \cite{PDE19} we have {\red  $\eta\in L^2(\R_{\ge 0}; H^1_0(\R^d))$.}
 
 \medskip
 
 Let us denote $w:= U_{\nu} - U_{\mu_t}$ for (I),  and $w:= U_{\mu_t} - U_{\mu}$ for (II). For the rest of the proof we focus on type (II), as (I) follows a parallel proof, and also because for (I) a different proof is available in \cite{gassiat2021}.  

\medskip

Since $\Delta w = \mu_t -\mu$ at each $t\geq 0$,  {\red from the definition of $\mu_t$ and  \eqref{eqn:EulerianPDE} we have} 
\begin{equation}\label{eq:potential-U-t}
		\Delta w(x) = \eta(t,x) + \int_0^t\rho(dr,x) -\mu \,\,\hbox{ in } \R^d, {\red \hbox{ for type (II)}.}
	\end{equation}
Now, for $g\in C(\R_{\ge 0}\times \R^d)$ with $g(t,\cdot)\in C_c(\R^d)$ for each  $t>0$, consider $\varphi$ solving {\red $\Delta \varphi(t,\cdot)=g(t,\cdot)$} for each $t>0$, with decay at infinity. 
Using \eqref{eqn:EulerianPDE} for $(\eta, \rho)$ and \eqref{eq:potential-U-t}, we integrate by parts to obtain 
\begin{align*}
	\int_{\R^d}\int_{\R_{\ge 0}} g(x,t) \frac{\partial}{\partial t} w (x)dtdx & = 
	\int_{\R^d}\int_{\R_{\ge 0}} \varphi(\partial_t\eta +\rho)dtdx 
	=\int_{\R^d}\int_{\R_{\ge 0}} \varphi (\frac{1}{2}\Delta\eta)dtdx \\
	&= \int_{\R^d}\int_{\R_{\ge 0}} \frac {1}{2} g(x,t)\eta (x,t) dtdx.
\end{align*}
Therefore we have
\begin{align}\label{eqn:partial-t-w}
\hbox{$\partial_t w = \frac{1}{2} \eta$ (in type (I), the sign is opposite.)}
\end{align}

\medskip

   From \eqref{eqn:partial-t-w} and the fact that $w(x,0)=0$, it follows that 
   \begin{equation}\label{integral}
   w(x,t)= \frac{1}{2}\int_0^t\eta(x,r) dr.
   \end{equation} Since $\eta$ is nonnegative, it follows that the set $(R^U)^c=\{w>0\}$ includes $\{\eta>0\}$, namely $$\{\eta>0\} \subseteq (R^U)^c.$$ 
       {\red As $\rho$ is the distribution of $(W_{\tau^*}, \tau^*)$, Proposition~\ref{lem:hitting_s} and Proposition~\ref{lem:nu-s-less-s-u-zero} together}
          imply that $$\rho((R^U)^c)=0,$$ and thus $\eta$ solves the heat equation in $(R^U)^c$. {\red Therefore}  $\eta$ is positive in the connected open component $\mathcal{R}$ of $(R^U)^c$ that contains $\{\mu >0\} \times \{t=0\}$, and $\eta$ is zero in any other component of $(R^U)^c$, yielding that $\{\eta>0\} = \mathcal{R}$.  It follows that the set $\{\eta>0\}$ increases in time, and from \eqref{integral} it follows that $\{w>0\} = \mathcal{R}$. We now conclude {\red from the definition of $w$} that $\mathcal{R}=(R^U)^c$ and thus  shows the item (a).  {\red  The set is open due to  Lemma~\ref{lem:Z-U-cloed} (a).}

\medskip

Now let us prove the item (b).  
Let $(\eta^U, \rho^U)$ be the Eulerian flow of $\tau^U$ that is the hitting time to $R^U$. 
From the definition of the hitting time we see  $\eta^U(R^U)=0$.  Also, since $R^U$ is a closed set, we have that $\rho^U(R^U) =1$.
From the previous paragraph, {\redf $\eta(R^U) =0$, $\rho (R^U)=1$},  thus, by applying the uniqueness in Lemma~\ref{lem:Eulerian_uniqueness}, we get 
$(\eta, \rho)=(\eta^U, \rho^U)$. In particular, this implies that $W_{\tau^U} \sim \nu$. Since  we already have $\tau^* \ge \tau^U$ a.s. and $W_{\tau^*} \sim \nu$, we conclude $\tau^U = \tau^*$ a.s. verifying the item (b). 
  \end{proof} 
  
\begin{remark}\label{rmk:with-non-local}
The simple but powerful argument of part (b) is borrowed from  \cite{CKKN23}.  In the older argument of a previous version of this paper, we used an obstacle PDE \eqref{eqn:obstacle1} to derive a weaker result (only $s^*=s^U$ $\nu$-a.e with an additional assumption). In  \cite{CKKN23}, the results of Theorem~\ref{thm:tau-equal-tau-U} are extended to nonlocal case (jump processes), especially in the type (II) case; type (I) case on the other hand is already a result of \cite{gassiat2021} even for the nonlocal case. 
\end{remark}

  }

 \medskip
 
 Theorem~\ref{thm:tau-equal-tau-U} states that if $\tau=\tau^*$ is the {\red optimal hitting time obtained in Theorem~\ref{thm:optimal-Skorokhod-existence}, 
then the barrier can be given by $R^U$. }
 {\red As the potential flow is a natural object associated to a stopping time, we regard $R^U$ as the canonical barrier associated to $\tau=\tau^*$, $\mu$ and $\nu$. 
  From now on we mean $R^U$ whenever we say a monotone barrier $R$ for an optimal stopping time $\tau^*$.
 Using closedness of $R^U$ we can in fact show that $R^U$ (so $R$ in our convention)
  is not dependent on $\tau^*$ (more precisely, the cost function it optimizes)}  but only on $\mu$ and $\nu$, as long as the optimal stopping time is given by the hitting time to a monotone barrier. 
  In particular this yields {\red uniqueness of  the optimal stopping time $\tau^*$} (hitting a forward or backward barrier set) for given $\mu$ and $\nu$, {\red depending only on the type, not on a particular cost. }

 \begin{lemma}[See also Remark 6.19 of \cite{beiglboeck2017optimal}]\label{lem:unique-R}
 {\red  

Let $\mu, \nu$ satisfy the assumptions of  Definition~\ref{def:potential_flow} and Theorem~\ref{thm:optimal-Skorokhod-existence}.
Then, there exists unique optimal stopping time $\tau^*$ between them for a type (I) cost, the same optimal $\tau^*$ for any choice of  type (I) cost; in particular, the barrier set $R^U$ for an optimal $\tau^*$
 is uniquely determined by $\mu$ and $\nu$. A same result holds for type (II) case, further assuming $\mu\wedge\nu=0$. }
\end{lemma}
{\red
\begin{proof}
 As the barrier $R^U$ is closed, it follows from the argument of Loynes \cite{Loynes1970} that the barrier set is unique; for example, follow the argument of \cite[Remarks 2.3 and 6.19]{beiglboeck2017optimal} using the closedness. Since $R^U$ determines $\tau^*$ from Theorem~\ref{thm:tau-equal-tau-U}, uniqueness of $\tau^*$ follows as well. 
\end{proof}
\begin{remark}\label{rmk:barrier-unique}
 A first result about uniqueness of the barrier set for a stopping time between two given measures has been made by Loynes \cite{Loynes1970} by introducing the notion of regular barriers  in the $1$ dimensional case with $\mu=\delta_0$, which then was extended to more general measures (in $1$-D) by Gassiat, Oberhauser, and dos Reis  
 \cite[Section 3.3. Lemma 2]{gassiat2015root} who introduced $(\mu,\nu)$-regular barriers. Their notion coincides with $R^U$ in our case. 
\end{remark}
}

\begin{definition}\label{barrier_set}
 For $\mu, \nu$ as given in Lemma~\ref{lem:unique-R},
$(R, s(x))$ is each {\it the barrier set} and {\it the barrier function} associated to $(\mu, \nu)$ for the  cost type (I) or the cost type (II), if {\red $R=R^U$, $s=s^U$ is the barrier set and the barrier function, uniquely given  
 in  Lemma~\ref{lem:unique-R},  either in the form of $R:= \{(x,t): t\geq s(x)\}$ for cost (I), or $R:= \{(x,t):0< t\leq s(x)\}$ for cost (II).}
\end{definition}

For a given barrier $R$, the corresponding Eulerian flow is uniquely determined, due to Lemma~\ref{lem:Eulerian_uniqueness}. This justifies the following definition:	
\begin{definition}\label{Eulerian}
We say the pair $(\eta, \rho)$ is the {\it Eulerian variables associated with } $(\mu, \nu)$
for the cost type (I) (or (II)) if  it is the unique pair $(\eta, \rho)$ solving \eqref{eqn:EulerianPDE} in the weak sense
with the property $\eta \in L^2([0,\infty);H^1_0(\overline{O})$,  with $\eta(R)=0$ and $\rho(R)=1$ for  {\red the  unique barrier }$R=R^U$ determined by $\mu, \nu$.  Here,  $O\subset \R^d$ is a bounded open convex set that contains the supports of $\mu$ and $\nu$.
\end{definition}

\subsection{Remarks on the potential approach and the parabolic obstacle problem}

 One may prove Lemma~\ref{lem:unique-R} 
by using a comparison principle, viewing the functions $w(x,t):= U_{\nu} - U_{\mu_t}$ for type (I), $w:= U_{\mu_t} - U_{\mu}$ for (II) as solutions to  parabolic obstacle problems:
\begin{align}\label{eqn:obstacle1}
 \min[\partial_t w - \frac{1}{2}\Delta w +\frac{1}{2} \nu,w]=0, \quad w(\cdot,0)=U_\nu -U_\mu \quad\hbox{ for type (I)};\\\nonumber
\min[ \partial_t w - \frac{1}{2} \Delta w +\frac{1}{2} \nu-\frac{1}{2}\mu, w]=0, \quad w(\cdot,0) = 0, \quad \hbox{ for type (II)}.
  \end{align}
Such problems have been actively studied in the literature.  In fact, this connection between the parabolic obstacle {\red problems} and the {\red supercooled} Stefan problem has been used in \cite{DbF84} to introduce a notion of weak solution solely based on  \eqref{eqn:obstacle1} 
 for a regularized version of $(St_1)$. However, even ignoring the regularization, {\red such a notion of weak solution} has its limitations due to lack of sufficient regularity of $w$-variable to track the problem back to $(St_1)$. We refer to \cite{C04} and \cite{LM15} for available results on regularity and singularity for solutions of \eqref{eqn:obstacle1}.  We point out that the low regularity of $w$ {\red for type (I), thus $(St_1)$, is related  to the fact that % $w_t$ 
{\red $\partial_t w$} 
is nonpositive; {\red see \eqref{eqn:partial-t-w}}.}   When {\red $\partial_t w$}  is nonnegative, which corresponds to the costs of type (II) and thus $(St_2)$, much stronger regularity results holds for the parabolic obstacle problem: see e.g. \cite{F21}.

\section{Consistency with the Stefan problem}\label{sec:consistency-Stefan}

Let  $(\eta, \rho)$ be the Eulerian variables associated with $(\mu, \nu)$, given in Definition~\ref{Eulerian}.  In this section we will show that $\eta$ solves the Stefan problem, with initial distribution $\mu$ and weight $\nu$, and vice versa. This connection has been indicated in \cite{PDE19} with formal analysis.

Let us define weak solutions of the (weighted) Stefan problems $(St_1)_{\nu}$ and $(St_2)_{\nu}$ with initial density $\eta_0$ and initial domain $E$:
\begin{equation} \label{Stefan}
(\eta \mp  \nu\chi_{\{\eta>0\}})_t - \frac{1}{2}\Delta\eta = 0,  \qquad \eta(\cdot,0) =\eta_0 \in L^1(\R^d), \quad E:=\limsup_{t\to0^+}\{\eta(\cdot,t)>0\}.
\end{equation}
\begin{definition}
A nonnegative function $\eta\in L^1(\R^d\times [0,\infty))$ is a weak solution of $(St_1)_{\nu}$ $({\rm or } \,\,(St_2)_{\nu})$ with initial data $(\eta_0, E)$  with $\eta_0\in L^1(\R^d)$ if 
\begin{itemize}
\item[(a)] the set $\{\eta(\cdot,t)>0\}$ decreases (or increases) in $t$;  
\item[(b)] $E= \limsup_{t\to0^+} \{\eta(\cdot,t)>0\}$; \\
\item[(c)] for any test function $\varphi\in C^{\infty}_c(\R^d\times [0,\infty))$,
 \begin{equation}\label{weak:stefan}
\int_0^{\infty}\int_{\R^d} [(\eta - (\hbox{ or } +)  \nu\chi_{\{\eta>0\}})\varphi_t +\frac{1}{2}\eta\Delta\varphi] dx dt =  \int_{\R^d} [(\eta_0 + (\hbox{ or } -)\nu\chi_E)\varphi ](x,0) dx.
\end{equation}
\end{itemize} 
 
We say that $\eta$ is a weak solutions of $(St_i)$ if it is for $(St_i)_{\nu}$ with $\nu=1$, for $i=1,2$.  
\end{definition}
\begin{remark}\label{rmk:St-St-nu}
Let us point out that  $(St_i)_{\nu}$, $i=1,2$ are characterized by only the value of $\nu$ restricted on the trace of the interface  {\red (free boundary)} within times in $(0,\infty)$. In other words, a weak solution $\eta$ of $(St_i)_{\nu_1}$ is also a solution of $(St_i)_{\nu_2}$, $i=1,2$,  if $\nu_1=\nu_2$ in the set $\{x: 0<s(x)<\infty\}$ where $s(x):= \sup\{t>0: \eta(x,t)>0\}.$ 
 To see this, note that 
$$
\int_0^{\infty} \int \nu\chi_{\{\eta>0\}} \varphi_t  \,\,dx dt = \int_0^{\infty} \nu(x) \int_0^{s(x)}\varphi_t dt dx= \int_0^{\infty} \nu(x)(\varphi(x,s(x)) - \varphi(x,0))dt dx, 
$$
So the only value of $\nu$ that matters is at $x$ with $0<s(x)<\infty$.  
 \end{remark}

\begin{remark}
Note that the weak solution for $(St_1)_{\nu}$ or $(St_2)_{\nu}$ requires specifying not only the initial data $\eta_0$ but the initial domain $E$  to be solved as an initial-value problem. With this information we can find a unique solution of $(St_2)$ by comparison principle, for instance see \cite{AL83}. On the other hand, even with specified $\eta_0$ and $E$, $(St_1)$ can exhibit a high degree of non-uniqueness, as we will see in Section~\ref{sec:nonuniqueness}.
\end{remark}

 \begin{theorem}\label{thm:consistency} 
Let $\mu,\nu$ be compactly supported and satisfy \eqref{assumption_measure}. Let $(\eta,\rho)$ be the associated Eulerian flow given in Definition~\ref{Eulerian} for either (I) or (II). If the optimal stopping time {\red $\tau$} associated with $(\mu, \nu)$ is strictly positive,  the following holds:
 \begin{itemize}
 \item[(a)] For (I), $\eta$ is a weak solution $(St_1)_{\nu}$ with initial data $(\mu, E)$, where $E$ is a set containing the support of \textcolor{blue}{$\nu$}. \\
 \item[(b)] For (II), $\eta$ is a weak solution $(St_2)_{\nu}$ with initial data $(\mu, E)$ where $E$ contains the support of $\mu$.  
 \end{itemize}
\end{theorem}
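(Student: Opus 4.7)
The plan is to read the Stefan equation off the Eulerian system $\rho + \partial_t\eta = \tfrac{1}{2}\Delta\eta$ by showing that, distributionally, $\rho = \mp\nu\,\partial_t\chi_{\{\eta>0\}}$ (minus for type (I), plus for type (II)); combined with the openness and time-monotonicity of $\{\eta>0\}$, this converts the Eulerian weak form \eqref{eqn:Skorokhod_evolution} into the weak Stefan formulation \eqref{weak:stefan}.

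First, I invoke Theorem~\ref{thm:tau-equal-tau-U} to get $\{\eta>0\} = (R^U)^c$ together with $s = s^U$ $\nu$-a.e. By Lemma~\ref{lem:Z-U-cloed}, $R^U$ is closed, so $\{\eta>0\}$ is open, and its $t$-sections decrease (resp.\ increase) in $t$ for (I) (resp.\ (II)) by the forward (resp.\ backward) monotonicity of $R^U$; this establishes property (a). For (b), the hypothesis that the optimal stopping time is strictly positive forces $s^U > 0$ $\nu$-a.e., so in type (I) the initial active set $\{s^U > 0\}$ contains $\mathrm{supp}(\nu)$, and I take $E$ to be this set. In type (II), backward-monotonicity gives $\{\eta(\cdot,0^+)>0\} = \mathrm{supp}(\mu)$, and $\mathrm{supp}(\nu)$ is disjoint from $\mathrm{supp}(\mu)$ by $\mu\wedge\nu = 0$.

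The core step is the identification of $\rho$. Proposition~\ref{lem:hitting_s} forces $\rho$ to be concentrated on the graph $\{(x,t) : t = s(x)\}$, while \eqref{eqn:Eulerian_target} normalizes $\int_{\R^+}\rho(dt,\cdot) = \nu$; with $s = s^U$ $\nu$-a.e., this gives the disintegration $\rho(dx,dt) = \nu(dx)\otimes \delta_{s^U(x)}(dt)$. For any $\varphi \in C^\infty_c(\R^d\times[0,\infty))$, applying the fundamental theorem of calculus along each $x$-fiber and using that $\chi_{\{\eta(x,\cdot)>0\}}(t)$ equals $\chi_{[0,s^U(x))}(t)$ in (I) and $\chi_{[s^U(x),\infty)}(t)$ in (II), I get
\begin{align*}
\int\!\!\int \varphi\,d\rho &= \int \nu\chi_E\,\varphi(\cdot,0)\,dx + \int\!\!\int \nu\chi_{\{\eta>0\}}\,\varphi_t\,dx\,dt \quad\text{for (I)}, \\
\int\!\!\int \varphi\,d\rho &= -\int\!\!\int \nu\chi_{\{\eta>0\}}\,\varphi_t\,dx\,dt \quad\text{for (II)},
\end{align*}
the boundary term in (II) vanishing because $\nu$ is supported away from $\{\eta(\cdot,0)>0\} = \mathrm{supp}(\mu)$. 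Substituting into \eqref{eqn:Skorokhod_evolution} and rearranging then produces \eqref{weak:stefan} with $\eta_0 = \mu$, proving property (c).

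The main obstacle will be rigorously justifying the disintegration of $\rho$, since $s^U$ is only lower semicontinuous a priori and not a priori a pointwise measurable selector for the fiber concentration points. A clean route is to avoid the pointwise disintegration altogether: define the candidate $\tilde\rho := \mp\nu\,\partial_t\chi_{\{\eta>0\}}$ directly as a distribution, verify using the monotone structure of $\{\eta>0\}$ that $\tilde\rho$ is a nonnegative Radon measure supported on $\partial\{\eta>0\}\subset R^U$ with $\int_{\R^+}\tilde\rho(dt,\cdot) = \nu$, and then invoke the uniqueness in Lemma~\ref{lem:Eulerian_uniqueness} to identify $(\eta,\tilde\rho)$ with $(\eta,\rho)$.
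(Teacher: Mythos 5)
Your proof is correct and follows essentially the same route as the paper's: invoke Proposition~\ref{lem:hitting_s} and Theorem~\ref{thm:tau-equal-tau-U} to localize $\rho$ on the graph of the barrier function and to identify $\{\eta>0\}$ with the open, time-monotone set $(R^U)^c$; then use the fundamental theorem of calculus along $t$-fibers to rewrite $\int\varphi\,d\rho$ in terms of $\nu\chi_{\{\eta>0\}}\varphi_t$ and the initial-time trace, and substitute into the Eulerian weak equation to obtain \eqref{weak:stefan}. The closing observation about bypassing the pointwise disintegration of $\rho$ by instead defining $\tilde\rho := \mp\nu\,\partial_t\chi_{\{\eta>0\}}$ and invoking the uniqueness in Lemma~\ref{lem:Eulerian_uniqueness} is a reasonable variant not taken by the paper, but it is unnecessary: since $s$ is Borel (being semicontinuous by Theorem~\ref{thm:tau-equal-tau-U} and Lemma~\ref{lem:Z-U-cloed}) and $\rho$ is concentrated on the graph of $s$ with $\int_{\R^+}\rho(dt,\cdot)=\nu$, the fiberwise identification $\rho(dx,dt)=\nu(dx)\otimes\delta_{s(x)}(dt)$ is already justified, which is exactly how the paper proceeds.
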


\begin{proof}
{\red Let us remark that in the type (II) case, optimal stopping time being positive is equivalent to $\mu \wedge \nu =0$ as seen in Theorem~\ref{thm:optimal-Skorokhod-existence}.}
\medskip

Let $R=R^U$ and $s(x)$ be the {\it barrier set and function}  associated with $(\mu, \nu)$ 
for either the cost of type (I) and (II) given in Definition~\ref{barrier_set}; {\red these are associated with the optimal stopping time (c.f. Lemma~\ref{lem:unique-R})}.  The following holds from Proposition~\ref{lem:hitting_s} and Theorem~\ref{thm:tau-equal-tau-U}:
\begin{itemize}
\item[(i)] $ \rho$ is supported on $\{t=s(x)\}$;
\item[(ii)]  $\{\eta>0\} = {\red R^C} = {\red \{0< t<s(x)\}}$ (or $\{t>s(x)\}$)  for (I) (or for (II)).
\end{itemize}

\medskip
 
For the next set of computations we focus on (I). Let {\red $\varphi \in C_c^{\infty}(\R^d \times [0,\infty))$}. 
Recall that $(\eta,\rho)$ then satisfies \eqref{eqn:EulerianPDE}, namely
$$
\int_0^{\infty}\int \eta \left( \varphi_t + \frac{1}{2}\Delta\varphi\right) dx dt = \int_0^{\infty}\int \rho\varphi dx dt + \int \mu\varphi(x,0) dx.
$$
We would like to see that $\eta$ satisfies the weak equation for $(St_1)_{\nu}$. To this end observe that
  $$
     \int_0^{\infty} \int \rho\varphi dx dt = \int\left( \int_0^{\infty}  \rho(x,t)\varphi(x,t) dt\right) dx = {\red  \int_{\{ s(x) >0\}} \nu(x) \varphi(x,s(x)) dx,}
     $$
   where the second equality comes from (i) and the fact that $\int_0^{\infty} \rho(x,t) dt = \nu(x)$ a.e. $x$, and {\red the fact that the optimal stopping time is strictly positive so the set $\{ s(x)>0\}$ contains the set $\{ \nu(x) >0\}$.}

\medskip

  Next, from (ii) {\red we have  $\{\eta>0\} = \{ t< s(x)\}$ and $E= \limsup_{t\to0^+} \{\eta(\cdot,t)>0\} =\{  s(x) >0\}$}. Thus  by Fubini's theorem
  $$
  \begin{array}{lll}
 {\red  \int_{\{s(x) >0\}} } \nu(x)\varphi(x,s(x)) dx &=& \int \nu(x) \int^{s(x)}_0 \varphi_t(x,t) dt dx + \int_{\{s(x)>0\}} \nu(x)\varphi(x,0) dx \\ \\
  &=& \int_{{\{\eta>0\}} }\nu(x)  \varphi_t (x,t) dt dx + \int \nu(x)\varphi(x,0) \chi_E(x) dx. 
  \end{array}
 $$
  Hence \eqref{eqn:EulerianPDE} can be written as 
 $$
\int_0^{\infty}\int \eta\left(\varphi_t +\frac{1}2{}\Delta\varphi\right) dx dt = \int \nu(x)\chi_{\{\eta>0\}} \varphi_t (x,t) dxdt +{\red \int [\mu(x)+\nu (x)]\chi_E(x)\varphi (x,0) dx.}
$$
 
 \medskip
 
 Hence $\eta$ is a weak solution $\eta\in  L^1(\mathbb{\R}^d\times [0,\infty))$ of  $(St_1)_\nu$, with initial data $\eta_0 = \mu$ and the initial trace {\red  $E =\{ s(x) >0\}$; notice that as $R=R^U$, this set is an open set. Note that $E$ contains the set $\{ \nu(x) >0\}$ and as it is an open set it thus contains the support of $\nu$. }
  
 \medskip
 
For (II), since  $\{\eta(\cdot,t)>0\}=\{s(x)<t\} $,  it follows from our assumption $\tau>0$ and {\red (i)},  that $\nu=0$ in $E$; {\red also, $E =\lim_{t\to 0^+} \{s(x)<t\} =\{ s(x) =0\}$. Moreover, since $\tau>0$ almost surely, for $\mu$-a.e. $x$, it holds that $s(x) >0$, therefore, the set $E$ contains the set $\{\mu (x)>0\}$. As $E$ is a closed set, it contains the support of $\mu$. Now,}  by Fubini's theorem
\begin{equation}\label{*}
 \int \nu(x)\varphi(x,s(x)) dx  = -\int \nu(x) \int_{s(x)}^{\infty}  \varphi_t(x, t) dt dx  = -\int_{\{\eta>0\}} \nu(x) \varphi_t(x,t) dt dx,
 \end{equation}
 
  Hence \eqref{eqn:EulerianPDE} for case (II) can then be written as 

 \begin{equation*}
\int_0^{\infty}\int \eta(\varphi_t +\frac{1}{2} \Delta\varphi) dx dt = -\int_{\{\eta>0\}} \nu(x)\varphi_t(x,t)dx dt +\int \mu(x)\varphi (x,0) dx,
\end{equation*}

which is the weak expression for $(St_2)_\nu$ with initial data $\mu$ and $E \subset \{\nu=0\}$. 
\end{proof}

\begin{remark}
In general  the theorem will hold with the revised initial data $\mu - \mu_0$ and revised target measure $\nu-\mu_0$, where $\mu_0$ is the portion of $\mu$ with $\tau=0$, namely 
 $$\mu_0 (x) := \,{\rm Prob} [ \tau =0\, |\, W_0 = x] \,\mu(x).$$
 \end{remark}

Next we consider the reverse direction, starting with $(St_1)_\nu$.

\begin{theorem}\label{reverse}
Let $\mu,\nu$ satisfy \eqref{assumption_measure} and compactly supported. Suppose  $\eta$ is a weak solution of $(St_1)_\nu$ with initial data $(\mu, E)$, where $E$ is bounded. Let us define $s$ and $\rho$ by 
 $$
 s(x):= \sup \{t>0: \eta(x,t)>0\} \hbox{ and } 
 $$
    \begin{equation}\label{def_rho}
 \int\int \rho(x) \varphi(x,t) \, dx dt =  \int \nu(x) \varphi(x,s(x)) dx
 \end{equation}
 for any test function $\varphi\in C^{\infty}_c(\R^d\times [0,\infty))$.  Then $\mu\leq_{SH} \tilde{\nu}:= \nu\chi_{\{s(x)<\infty\}}$. Moreover 
  $(\eta,\rho)$ is the Eulerian variables between $\eta_0=\mu$ and $\tilde{\nu}$, generated by the optimal stopping time with costs of type (I).
  
   \end{theorem}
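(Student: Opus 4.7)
The plan is to establish three assertions in order: (i) $(\eta,\rho)$ is admissible Eulerian data for $(\mu,\tilde\nu)$; (ii) $\mu\le_{SH}\tilde\nu$; and (iii) $(\eta,\rho)$ coincides with the Eulerian variables of the unique optimal type-(I) stopping time between $\mu$ and $\tilde\nu$. For (i), I would plug a generic test function $\varphi\in C_c^\infty(\R^n\times[0,\infty))$ into the weak form of $(St_1)_\nu$ and rewrite the coupling term $\int\!\!\int\nu\chi_{\{\eta>0\}}\varphi_t$. Monotonicity of $\{\eta(\cdot,t)>0\}$ in $t$ (built into the definition of weak solution) implies that for a.e.\ $x$ the slice $\{t:\eta(x,t)>0\}$ agrees with $[0,s(x))$ up to a null set, so by Fubini this term equals $\int \nu(x)[\varphi(x,s(x))-\varphi(x,0)]\,dx$, where the $s(x)=\infty$ contribution drops out via $\varphi(x,\infty)=0$. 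By the very definition of $\rho$, the $\varphi(x,s(x))$ piece is $\int\!\!\int \rho\,\varphi\,dx\,dt$, and thus inserting back into the Stefan identity one directly reads off the Eulerian PDE \eqref{eqn:EulerianPDE} with $\eta(\cdot,0)=\mu$. Choosing $\varphi=\psi(x)\chi_T(t)$ with $\psi$ a smooth cutoff in $x$, $\chi_T$ a time cutoff, and letting $T\to\infty$ then gives $\int_0^\infty \rho\,dt=\nu\chi_{\{s<\infty\}}=\tilde\nu$.

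For (ii), I would repeat the same substitution but with $\psi$ a smooth subharmonic function. The $\chi_T'$ boundary contribution vanishes as $T\to\infty$ because $\eta$, being dominated by a solution of the heat equation with bounded compactly supported data, decays sufficiently fast. What remains is the identity $\tfrac12\int\!\!\int\eta\,\Delta\psi\,dx\,dt=\int\psi\,d\tilde\nu-\int\psi\,d\mu$; since $\eta\ge0$ and $\Delta\psi\ge0$ this forces $\int\psi\,d\mu\le\int\psi\,d\tilde\nu$, i.e.\ $\mu\le_{SH}\tilde\nu$.

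For (iii), Theorem~\ref{thm:optimal-Skorokhod-existence} converts the subharmonic order from (ii) into a unique optimal type-(I) stopping time $\tau^*$ between $\mu$ and $\tilde\nu$, with associated Eulerian variables $(\eta^*,\rho^*)$ in the sense of Definition~\ref{Eulerian}. To show $(\eta,\rho)=(\eta^*,\rho^*)$, I would adapt the obstacle-problem argument from the proof of Theorem~\ref{thm:tau-equal-tau-U}: set $\mu_t:=\eta(\cdot,t)+\int_0^t\rho(\cdot,s)\,ds$ and $w:=U_{\tilde\nu}-U_{\mu_t}$. The Eulerian PDE from step (i) forces $w_t=-\tfrac12\eta$ and $\Delta w=\tilde\nu-\mu_t$, so $w$ solves the parabolic obstacle problem \eqref{obstacle_1} with initial data $U_{\tilde\nu}-U_\mu$ and active set $\{w>0\}=\{\eta>0\}$. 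The corresponding $w^*$ built from $(\eta^*,\rho^*)$ solves the same problem; by the comparison principle discussed after Lemma~\ref{lem:unique-R}, $w\equiv w^*$, hence $\mu_t\equiv\mu^*_t$, and so $\eta=-2w_t=\eta^*$ and $\rho=\rho^*$, exactly what the optimality claim asserts.

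The main difficulty is the identification $\{w>0\}=\{\eta>0\}$: for a stopping time this was deduced in Theorem~\ref{thm:tau-equal-tau-U} using Lemma~\ref{lem:Eulerian_uniqueness} along the canonical potential barrier of the hitting time, but here we start only from a weak Stefan solution with a formally defined $\rho$. Closing this gap will presumably require combining the monotonicity of $\{\eta(\cdot,t)>0\}$ in $t$, the Lebesgue-point structure of the slices used in (i), the space--time regularity of $U_{\mu_t}$ from Corollary~\ref{cor:uniform-U}, and the parabolic obstacle regularity theory cited around \eqref{obstacle_1}, so that the support of $\rho$ lies on $\partial\{\eta>0\}$ and the obstacle-problem comparison principle can be invoked cleanly against the optimal flow.
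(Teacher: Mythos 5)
Your plan follows essentially the same route as the paper's proof (integrate the weak Stefan equation to get a parabolic obstacle problem, then identify $(\eta,\rho)$ with the optimal Eulerian pair via obstacle-problem uniqueness and Lemma~\ref{lem:Eulerian_uniqueness}), but the step you flag as the ``main difficulty'' --- identifying $\{w>0\}=\{\eta>0\}$ --- has a direct resolution that does not require the additional machinery you invoke (Lebesgue-point slices, $C^{1,\alpha}$ regularity of $U_{\mu_t}$, obstacle-problem regularity). Work with $\xi(x,t):=\int_t^\infty\eta(x,r)\,dr$ directly rather than building $w$ through the potential $U_{\mu_t}$. Because $\{\eta(\cdot,t)>0\}$ decreases in $t$ (part (a) of the weak-solution definition), for each $x$ the slice $\{t:\eta(x,t)>0\}$ is exactly the interval $[0,s(x))$; hence $\xi(x,t)>0$ iff $t<s(x)$, giving $\{\xi>0\}=\{\eta>0\}=\{t<s(x)\}$ \emph{by construction}, before any comparison argument is made. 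Integrating the weak Stefan equation in time then shows $\xi$ solves the parabolic obstacle problem with initial datum proportional to $U_{\tilde\nu}-U_\mu$, and uniqueness of that problem identifies $\xi$ with $\int_t^\infty\tilde\eta\,ds$, where $\tilde\eta$ is the Eulerian flow of the optimal stopping time from $\mu$ to $\tilde\nu$; in particular $\{\tilde\eta>0\}=\{\eta>0\}$.

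What actually needs the extra structure is the final upgrade from equality of the positive sets to $\eta=\tilde\eta$: here one uses that $\eta$ is a subsolution of the heat equation with compactly supported data, hence lies in $L^2([0,\infty);H^1_0)$, so that Lemma~\ref{lem:Eulerian_uniqueness} applies with $R$ taken to be the common zero set (a forward barrier since $\{\eta(\cdot,t)>0\}$ decreases). Note also that your potential-flow $w$ agrees with $\tfrac12\xi$ by uniqueness of the Poisson equation with decaying data, so your formulation is consistent with the above; it simply makes the active-set identification look harder than it is. Your steps (i) and (ii) are fine and match the paper, though the paper does not write out (ii) separately since the subharmonic order is implicit once admissibility of $(\eta,\rho)$ is established.
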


\begin{proof}
By definition of $s(x)$, $\{\eta>0\}=\{t<s(x)\}$. This allows us to write, for instance in the case of $(St_1)$,
\begin{align*}
 \int \nu(x)\varphi(s(x),x) dx - \int \nu(x)\varphi(0,x) dx = \int \nu(x) \int^{s(x)}_0 \varphi_t(t,x) dt dx  \\= \int\int_{\{\eta>0\}} \nu(x) \varphi_t(t,x) dt dx.
 \end{align*}
 Arguing as in the proof of Theorem~\ref{thm:consistency}, we can then  verify that $(\eta,\rho)$ satisfies the equation \eqref{eqn:EulerianPDE}. Moreover, we have $\eta(R)=0$ for the barrier set $R:= \{(x,t): t \geq s(x)\}$ by the definition of $s(x)$. Observe also that 
 $$
 \rho(R)= \int \rho(x,t) dx dt = \int\eta(x,0) dx = \mu(\R^d)=1,
 $$ 
 where the first equality holds since $\rho$ is supported in $R$, and the second is due to the mass preserving property of the heat equation.

 \medskip
 
  Let us define $\xi:= \int_t^{\infty} \eta(x,t) dt$. Integrating in time the weak equation for  $(St_1)_{\nu}$, we see that $\xi$ solves the parabolic obstacle problem
 $$
 \xi_t - \frac{1}{2}\Delta \xi = -\frac{1}{2}\nu\chi_{\{\xi>0\}}. 
 $$ 
 On the other hand, since we have $\eta_t - \Delta \eta = -\rho$ in the distribution sense,  and since $\eta$ vanishes as time tends to infinity, it follows that  
 $$
\mu = \eta(\cdot,0)=  \int_0^{\infty} (-\eta_t) = \int_0^{\infty} (-\Delta \eta + \rho) =  -\Delta \left(\int_0^{\infty} \eta\right) + \tilde{\nu}.
 $$
 Hence $\mu - \tilde{\nu} = -\Delta \xi(\cdot,0)$. Thus, by uniqueness of the parabolic obstacle problem, $$\xi(x,t)=w(x,t) = \int_t^{\infty} \tilde{\eta}(x,s) ds,$$ where $\tilde{\eta}$ is the Eulerian variable generated by the optimal stopping time between $\mu$ and $\tilde{\nu}$. In particular $\{\tilde{\eta}>0\}= \{\eta>0\}$. 
 \medskip
 
 To conclude, note that  $\eta \in L^2([0,\infty), H^1_0(\R^n))$, due to the fact that $\eta$ is a subsolution of the heat equation with compactly supported initial data. Hence  taking their zero set as $R$ (where clearly both $\rho$ and $\rho_0$ are supported), we can  apply Lemma~\ref{lem:Eulerian_uniqueness}  to conclude that $\eta = \tilde{\eta}$. 
 \end{proof} 

We finish this section with the corresponding statement for $(St_2)_{\nu}$. The proof is parallel to $(St_1)_{\nu}$.

\begin{theorem}\label{reverse_2}
Let $\mu,\nu$ satisfy \eqref{assumption_measure} {\red and compactly supported}, and let $\eta$ be a weak solution of  $(St_2)_\nu$ and initial data $(\mu, E)$, where $E$ is bounded. Let us define $s$ and $\rho$ by  
$$
s(x):= \inf\{t>0:\eta(x,t)>0\}
$$
 and 
 \eqref{def_rho}. Then $(\eta,\rho)$ is the Eulerian variables between $\mu$ and $\nu$ generated by the optimal stopping time with costs of type (II), where 
 $$
 \mu =\eta_0\hbox{ and } \tilde{\nu} = \nu\chi_{E^C}. 
 $$
 \end{theorem}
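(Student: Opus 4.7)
The plan is to mirror the argument of Theorem~\ref{reverse}, adapted to the backward-barrier setting of $(St_2)_\nu$. Since for $(St_2)_\nu$ the set $\{\eta(\cdot,t)>0\}$ is increasing in $t$, we have $\{\eta>0\}=\{t>s(x)\}$ and the natural barrier is $R:=\{(x,t):t\leq s(x)\}$. The first step is to verify the Eulerian PDE \eqref{eqn:EulerianPDE}. Applying Fubini to the $\nu\chi_{\{\eta>0\}}\varphi_t$ term in the weak formulation of $(St_2)_\nu$, exactly as in \eqref{*}, gives
$$\int_0^\infty\!\!\int\nu\chi_{\{\eta>0\}}\varphi_t\,dxdt \,=\, -\int\nu(x)\varphi(x,s(x))\,dx \,=\, -\int_0^\infty\!\!\int\rho\,\varphi\,dxdt,$$
where the last equality uses the definition \eqref{def_rho} of $\rho$. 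Substituting into the weak $(St_2)_\nu$ equation and reorganizing yields $\rho+\partial_t\eta=\tfrac12\Delta\eta$ with $\eta(\cdot,0)=\mu$, i.e., \eqref{eqn:EulerianPDE}. Moreover $\eta(R)=0$ by definition of $s$, and $\rho$ is supported in $\{t=s(x)\}\subseteq R$ with $\int\rho\,dt=\nu$, so $\rho(R)=1$.

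Next I apply the parabolic obstacle problem framework from the proof of Lemma~\ref{lem:unique-R}. Setting $\xi(x,t):=\int_0^t\eta(x,r)\,dr$, so that $\{\xi>0\}=\{t>s(x)\}=\{\eta>0\}$, and integrating the $(St_2)_\nu$ equation in time produces an obstacle problem for $\xi$ of the form \eqref{obstacle_2}, with the $\nu\chi_E$ contribution arising from the jump of $\chi_{\{\eta>0\}}$ at $t=0^+$. This obstacle problem has a unique solution by the comparison argument used in Lemma~\ref{lem:unique-R}. Let $\tilde\eta$ be the Eulerian variable produced by the optimal type (II) stopping between $\mu$ and $\tilde\nu:=\nu\chi_{E^c}$, in which randomized initial stopping absorbs the mass $\nu\chi_E$; its integrated potential (the analogue of $w$ in \eqref{obstacle_2}) satisfies the same obstacle problem with the same $\nu\chi_E$ data. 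Hence $\xi = 2\tilde w$, so $\{\eta>0\}=\{\tilde\eta>0\}$ and the two flows share the same backward barrier $R$.

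Finally I invoke Lemma~\ref{lem:Eulerian_uniqueness}. As a subsolution of the heat equation with compactly supported initial data, $\eta\in L^2([0,\infty);H^1_0(\overline O))$ for a suitable bounded convex $O$ containing the supports of $\mu$ and $\nu$. Both $(\eta,\rho)$ and the optimal-stopping Eulerian pair are admissible, share the same backward barrier $R$, and agree on the value of $\rho$ on $E\times\{0\}=\{s(x)=0\}\times\{0\}$ (both equal to the atom $\nu\chi_E$, reflecting the $t=0$ randomized stopping intrinsic to type (II)). The backward-barrier case of Lemma~\ref{lem:Eulerian_uniqueness} then forces them to coincide.

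The main obstacle I anticipate is the careful bookkeeping of the atomic part of $\rho$ at $t=0$ supported on $E$, which corresponds to the randomized initial stopping characteristic of type (II). This must be matched both in the initial datum of the obstacle problem (through the jump of $\chi_{\{\eta>0\}}$ at $t=0^+$) and in the ``value on $\{s(x)=0\}$'' required by Lemma~\ref{lem:Eulerian_uniqueness} for backward-barrier uniqueness. The decomposition $\nu=\nu\chi_E+\nu\chi_{E^c}$, with $\tilde\nu=\nu\chi_{E^c}$ serving as the effective non-randomized target, is the conceptual key to this bookkeeping.
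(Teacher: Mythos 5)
Your proposal is correct and follows essentially the same route as the paper: the paper only remarks that ``the proof is parallel to $(St_2)_\nu$'s $(St_1)$ analogue,'' and you carry out precisely that adaptation --- Fubini to identify $\rho$ and verify \eqref{eqn:EulerianPDE}, time-integration to reduce to the parabolic obstacle problem \eqref{obstacle_2}, uniqueness for the obstacle problem, and then the backward-barrier case of Lemma~\ref{lem:Eulerian_uniqueness}. You also correctly flag the one genuinely new ingredient compared with Theorem~\ref{reverse}, namely that the $t=0$ atom $\nu\chi_E$ of $\rho$ must be supplied as the extra datum that the backward-barrier version of Lemma~\ref{lem:Eulerian_uniqueness} requires.
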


In the next section, we will discuss a specific class of the target measures, from which solutions to the classical Stefan problems $(St_1)$ and $(St_2)$ are generated.

\section{Subharmonically generated sets}\label{sec:subharmonic-generating}

This section defines a central notion of the present paper, which characterizes a pair of sets by existence of a certain stopping time. This notion then is connected to solvability of the supercooled Stefan problem $(St_1)$.

\begin{definition}\label{def:subharmonic-set}
We say the pair $(\Sigma, E)$ is {\em subharmonically generated} by $\mu$, if there is a stopping time $\tau$  with the corresponding Eulerian flow $(\eta, \rho)$  \eqref{eqn:EulerianPDE},  such that 
 $\tau>0$ almost surely, $W_0\sim \mu$ and $W_\tau \sim \nu = \chi_\Sigma$ and $E =\{ x \  | \  \eta (x, t) >0 \text{ for some } t>0\}.$  
\end{definition}

This definition is motivated by the following equivalence result.
\begin{theorem}\label{thm:sh-gen-equiv-St1}
Consider a compactly supporeted measure $\mu \ll Leb$ on $\R^d$. 
Then for a given {\red bounded} set $E$, the following are equivalent. 
\begin{itemize}
\item[(a)]There exists a  weak solution $\eta$ of $(St_1)$ with initial data $(\mu, E)$ 
\item[(b)] There exists a measurable set $\Sigma$ such that 
$(\Sigma, E)$ is subharmonically generated by $\mu$.
\end{itemize}
 Moreover,  $\Sigma=\{z(x)<\infty\}$ with $z(x):=\sup\{t >0 \ : \  \eta(x,t)>0\}$.
 \end{theorem}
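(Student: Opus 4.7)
The plan is to derive both implications from the consistency theorems (Theorem~\ref{thm:consistency} and Theorem~\ref{reverse}) linking weak solutions of $(St_1)_\nu$ to Eulerian flows of optimal type (I) stopping times, specialized to the unweighted case $\nu\equiv 1$.

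For (a)~$\Rightarrow$~(b), I would start from a weak solution $\eta$ of $(St_1)$ with initial data $(\mu,E)$, define $z(x):=\sup\{t:\eta(x,t)>0\}$, $\Sigma:=\{z(x)<\infty\}$, and $\rho$ via \eqref{def_rho} with $\nu=1$. Applying Theorem~\ref{reverse} then yields $\mu\le_{SH}\chi_\Sigma$ and an optimal type (I) stopping time $\tau^*$ between $\mu$ and $\chi_\Sigma$ whose Eulerian flow is exactly $(\eta,\rho)$. The active region $\{x:\eta(x,t)>0\;\text{for some}\;t>0\}$ coincides with $E$ by the monotonicity condition (a) and the initial-trace condition (b) in the definition of weak solution, while $\tau^*>0$ almost surely follows because an atom at $t=0$ would create a Dirac component incompatible with $\chi_\Sigma\in L^\infty$. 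Hence $(\Sigma,E)$ is subharmonically generated by $\mu$ via $\tau^*$, and $\Sigma=\{z(x)<\infty\}$ by construction.

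For (b)~$\Rightarrow$~(a), the existence of a subharmonically-generating stopping time gives $\mu\le_{SH}\chi_\Sigma$, so Theorem~\ref{thm:optimal-Skorokhod-existence} produces the optimal type (I) stopping time $\tau^*$ between $\mu$ and $\chi_\Sigma$. Theorem~\ref{thm:consistency}(a) asserts that its Eulerian flow $(\eta^*,\rho^*)$ is a weak solution of $(St_1)_{\chi_\Sigma}$. I would then identify $\eta^*$ with the $\eta$ from the subharmonic generation using uniqueness of the Eulerian variables associated with the marginals $(\mu,\chi_\Sigma)$ (Lemma~\ref{lem:unique-R} combined with Lemma~\ref{lem:Eulerian_uniqueness}, valid because $\chi_\Sigma\ge1$ on $\Sigma$), so that the active region of $\eta^*$ equals $E$. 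To pass from $(St_1)_{\chi_\Sigma}$ to $(St_1)$, I would apply Remark~\ref{rmk:St-St-nu}: Proposition~\ref{lem:hitting_s} and Theorem~\ref{thm:tau-equal-tau-U} place the support of $\rho^*$ in the spacetime boundary of $\{\eta^*>0\}$, with spatial projection contained in $\Sigma$ up to a $\nu$-null set, so $\chi_\Sigma$ and the constant weight $1$ coincide on the relevant boundary set. The identity $\Sigma=\{z(x)<\infty\}$ then follows from the parabolic maximum principle: inside $\Sigma$ the freezing reaction forces $\eta^*$ to vanish in finite time, while outside $\Sigma$ the equation reduces locally to the heat equation and $\eta^*$ stays positive wherever it was initially positive.

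The main obstacle is the identification step in (b)~$\Rightarrow$~(a): the subharmonic-generation hypothesis only produces \emph{some} stopping time with active region $E$, not an \emph{optimal} one, whereas Theorem~\ref{thm:consistency} requires the canonical Eulerian flow of the optimal stopping time. Bridging this gap depends on showing that the pair $(\mu,\chi_\Sigma)$ determines the Eulerian flow uniquely under the type (I) monotone-barrier structure, which is precisely the content of Lemma~\ref{lem:unique-R} combined with Theorem~\ref{thm:tau-equal-tau-U}. Once this uniqueness is in hand, verifying that the canonically determined active region matches the given $E$ is the crucial compatibility step, and the rest of the argument reduces to a bookkeeping of the weak-formulation identity.
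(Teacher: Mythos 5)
Your overall approach coincides with the paper's, which for both implications simply invokes Theorem~\ref{reverse} and Theorem~\ref{thm:consistency}, plus the observation that the monotonicity of $\{\eta(\cdot,t)>0\}$ forces $\limsup_{t\to 0^+}\{\eta(\cdot,t)>0\}=\cup_{t>0}\{\eta(\cdot,t)>0\}=E$. However, one of your supporting steps is wrong: you claim $\tau^*>0$ almost surely ``because an atom at $t=0$ would create a Dirac component incompatible with $\chi_\Sigma\in L^\infty$.'' Since $\mu\ll\text{Leb}$, an atom of $\tau^*$ in time at $t=0$ deposits an absolutely continuous chunk of $\mu$ in space, not a spatial Dirac mass, so the resulting target could perfectly well stay in $L^\infty$. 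There is no contradiction of the sort you describe. The actual reason the optimal stopping time produced by Theorem~\ref{reverse} satisfies $\tau^*>0$ is that the weak-solution initial condition $\eta(\cdot,0)=\mu$ forces the entire initial mass to remain active at $t=0^+$: any set where the hitting time were $0$ with positive $\mu$-measure would be instantaneously removed from $\eta$, contradicting $\eta(\cdot,0)=\mu$.

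Your discussion of the ``identification step'' in (b)$\Rightarrow$(a) correctly flags where the argument is thinnest, but the bridge you propose does not quite close it: Lemma~\ref{lem:unique-R} and Theorem~\ref{thm:tau-equal-tau-U} only give uniqueness among hitting times to time-monotone barriers, whereas Definition~\ref{def:subharmonic-set} allows an arbitrary stopping time $\tau$ with $\tau>0$, $W_\tau\sim\chi_\Sigma$, and the prescribed active region $E$. A general stopping time need not have a monotone-barrier structure, so one cannot directly quote those uniqueness results to conclude that its Eulerian flow agrees with that of the type~(I) optimal stopping time. The paper itself elides this point in its one-sentence citation of Theorem~\ref{thm:consistency}; the cleanest way to make it rigorous is either to read Definition~\ref{def:subharmonic-set} as implicitly requiring the hitting-time structure (consistent with the surrounding Section~\ref{sec:potential} development), or to observe that the monotonicity of $\{\eta(\cdot,t)>0\}$ imposed in (a) of the weak-solution definition is itself only achievable by barrier hitting times, which is what Theorem~\ref{reverse} exploits in the converse direction.
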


\begin{proof}

If $(a)$ holds, then $(b)$ follows from Theorem ~\ref{reverse}, with $\Sigma:=\{s(x)<\infty\}$ for $s(x):=\sup\{t >0: \eta(x,t)>0\}$. 
Let us remark that, since the set  $\{\eta(\cdot,t)>0\}$ decreases as $t$ increases, we have
$$
\limsup_{t\to 0^+} \{\eta(\cdot,t)>0\} = \cup_{t>0} \{\eta(\cdot,t)>0\} =  E. 
$$
Similarly, if $(b)$ holds, then $(a)$ follows from Theorem~\ref{thm:consistency}. 
\end{proof}

We mention that obtaining $\nu$ as a characteristic function $\chi_\Sigma$ does not guarantee a corresponding solution for $(St_1)$ unless the stopping time is strictly positive. Even with the stopping time strictly positive, for a given $\mu$ and $E$ there can be many $\Sigma$ such that $(\Sigma, E)$ is subharmonically generated by $\mu$. Below we will discuss an example that illustrates these points. 
{\red In Section~\ref{sec:optimal target} we will present results concerning the optimization problem \eqref{eqn:problem-upper} for the target measure $\nu$. Together with the results in Section~\ref{sec:saturation} this optimization problem uniquely yields a subharmonically generated pair $(E,E)$ for each given $\mu$; see Theorem~\ref{thm:sh-generate}. In Section~\ref{sec:global-Stefan}, we will see  that our optimal target scheme provides a mechanism to construct solutions of $(St_1)$ in a stable manner, by choosing an initial domain that  is strictly larger thatn the support of the initial data and yields an instant regularizing effect for the evolution, at the initial time.}

\subsection{Example for nonuniquenss for $(St_1)$ }\label{sec:nonuniqueness}

The weak solution of $(St_1)$ is known to have non-uniqueness  with given initial data. 
Here we give an example that yields infinitely many weak solutions to $(St_1)$ with the same initial data $(\mu, E)$: these are solutions that do not vanish in finite time.

\begin{proposition}\label{prop:non-uniquenses-St1}
  Let $\mu :=  \frac{1}{|B_\epsilon|} \chi_{B_\epsilon}$ for some $0<\epsilon<1/2$ and
$E=\{|x| \le 1\}$. Then  {\red for a sufficiently small $\epsilon$,} there are infinitely many weak solutions to $(St_1)$ with initial data $(\mu, E)$. 
\end{proposition}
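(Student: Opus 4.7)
Plan: The approach is to invoke Theorem~\ref{thm:sh-gen-equiv-St1}, which identifies weak solutions of $(St_1)$ with initial data $(\mu, B_1)$ with sets $\Sigma$ such that $(\Sigma, B_1)$ is subharmonically generated by $\mu$, via the rule $\Sigma = \{x : s(x) < \infty\}$ where $s(x) = \sup\{t : \eta(x,t)>0\}$. Because $\Sigma$ is thus a functional of $\eta$, distinct $\Sigma$'s produce distinct weak solutions, and it suffices to exhibit infinitely many admissible $\Sigma$'s, meaning (a) $|\Sigma| = |\mu| = 1$ with $\Sigma \subseteq \bar B_1$, (b) $\mu \le_{SH} \chi_\Sigma$, and (c) the active region of the Eulerian flow associated (by Theorem~\ref{thm:consistency}) to $(\mu, \chi_\Sigma)$ equals $B_1$.

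I propose the one-parameter family of radially symmetric targets
$$
\Sigma_\delta \,=\, \{x : |x| \le r_0(\delta)\} \,\cup\, \{x : 1 - \delta \le |x| \le 1\}, \qquad \delta \in (0, \delta_0],
$$
with $r_0(\delta)$ chosen continuously so that $|\Sigma_\delta| = 1$, and $\delta_0$ small enough that $r_0(\delta) < 1-\delta$. Condition (a) holds by construction and the sets are visibly distinct. The subharmonic order in (b), equivalent to the potential inequality $U_\mu \le U_{\chi_{\Sigma_\delta}}$ on $\R^d$, would follow from Newton's shell theorem: outside $B_1$ both sides equal the potential of a unit point mass at the origin, and inside $B_1$ direct computation using the explicit formulae for the Newtonian potential of a ball and of a spherical shell gives strict inequality, provided $\epsilon$ is small so that $U_\mu$ is sufficiently negative near the origin.

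The key geometric fact is that the outer shell piece of $\Sigma_\delta$ touches $\partial B_1$, which is what forces the active region to equal $B_1$ in (c). By Theorem~\ref{thm:tau-equal-tau-U}, this active region is the complement of the potential-barrier $R^U$, whose barrier function satisfies $s^U(x) = \inf\{t : U_{\mu_t}(x) = U_{\chi_{\Sigma_\delta}}(x)\}$; since $U_\mu = U_{\chi_{\Sigma_\delta}}$ on $\R^d \setminus B_1^\circ$ and (strictly) below on $B_1^\circ$, one obtains $s^U>0$ precisely on $B_1^\circ$. Theorem~\ref{thm:consistency} then gives a weak solution $\eta_\delta$ of the weighted Stefan problem $(St_1)_{\chi_{\Sigma_\delta}}$ with initial data $(\mu, B_1)$, and since $\chi_{\Sigma_\delta} \equiv 1$ on $\Sigma_\delta$, Remark~\ref{rmk:St-St-nu} promotes $\eta_\delta$ to a weak solution of the unweighted $(St_1)$ with the same initial data. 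Varying $\delta$ yields infinitely many distinct weak solutions.

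The main obstacle is the rigorous verification of (c): namely that $s^U > 0$ on all of $B_1^\circ$, not merely on a subregion. In particular one must exclude the possibility that the presence of the inner ball component creates an intermediate ``dead zone'' where the two potentials coincide; this reduces to a strict potential inequality on the annular region between the inner ball and the outer shell, which is handled by direct Newtonian computation exploiting that $\mu$ is highly concentrated near the origin while $\chi_{\Sigma_\delta}$ is spread out in two separated pieces.
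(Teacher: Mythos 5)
Your proof is correct in its essentials but takes a genuinely different route from the paper's. The paper uses a two-annuli target $\Sigma = \{2\epsilon \le |x| \le r\} \cup \{r' \le |x| \le 1\}$ (disjoint from $\operatorname{supp}\mu$), and verifies admissibility \emph{constructively}: it exhibits a randomized stopping time for $(\mu, \chi_\Sigma)$ explicitly, by randomizing at $t=0$ over first-hitting times of concentric spheres, and then passes to the optimal stopping time of $\mathcal{P}(\mu,\chi_\Sigma)$. You instead use a ball-plus-shell target $\Sigma_\delta = B_{r_0} \cup \{1-\delta \le |x| \le 1\}$ and verify admissibility \emph{analytically}: you check the potential inequality $U_\mu \le U_{\chi_{\Sigma_\delta}}$ via Newton's shell theorem, which yields both the subharmonic order and (through $s^U$ and Theorem~\ref{thm:tau-equal-tau-U}) the identification of the active region with $B_1$. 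Your route is more systematic in that it reuses the paper's own potential-flow machinery from Section~\ref{sec:potential} rather than an ad hoc construction, and it actually makes the claim "$E=\{|x|\le 1\}$" more rigorous than the heuristic remark in the paper's proof ("to reach $A_2$, $\eta$ should be active on all of $\{|x|\le1\}$"). The trade-off is that your $\Sigma_\delta$ contains the origin, so $\operatorname{supp}\mu \subset \Sigma_\delta$ and strict positivity of $\tau^*$ is not automatic from disjoint supports as in the paper; you correctly flag that this must be ruled out via the strict potential inequality on $B_1^\circ$, and the radial ODE computation for $v = U_{\chi_{\Sigma_\delta}} - U_\mu$ does confirm $v' < 0$ on $(0,1)$ (provided $|B_\epsilon| < 1$), so the argument closes. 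One detail left implicit in your sketch, shared also by the paper, is the verification needed to apply Remark~\ref{rmk:St-St-nu}: one must know that $\{x : 0 < s(x) < \infty\}$ lies in $\Sigma_\delta$ up to measure zero, so that $\chi_{\Sigma_\delta} \equiv 1$ there; this follows because $\rho$ is concentrated on the graph of $s$ with $\int\rho(x,\cdot)\,dt = \nu(x)$, forcing $s < \infty$ only $\nu$-a.e.
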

\begin{proof}
Consider the set $\Sigma:= A_1 \cup A_2 \subset \R^d$ consisting of the union of two annuli, 
\begin{align*}
A_1:= \{2\epsilon \leq |x| \leq r\}, \quad A_2:= \{r' \leq |x| \leq 1\},\,\,  \quad \hbox{ where } 0<2\epsilon< r<r'<1.
\end{align*}
Assume $|\Sigma|=1$ by choosing appropriate $\epsilon, r,$ and $  r'$.
Let $\nu= \chi_\Sigma$. 
We will prove
\begin{align*}
\hbox{{\bf Claim:} there exists a randomized stopping time $\tau$, with $W_0 \sim \mu$ and $W_\tau \sim \nu$.}
\end{align*}
After verifying this claim, we can  find the optimal stopping time $\tau^*$ for the optimal Skorokhhod problem $\mathcal{P}(\mu, \nu)$ \eqref{eqn:problem-OSP}. Then, the corresponding Eulearian flow $(\eta, \rho)$ 
as given in Proposition~\ref{thm:stochastic_embedding} gives a solution to $(St_1)_\nu$ via Theorem~\ref{thm:consistency}, with the initial set 
$$E=\limsup_{t\to 0^+} \{\eta(\cdot,t)>0\} = \{|x|\le 1\}.$$ This is because, to reach the outer annulus $A_2$, the Eulerian variable $\eta$ should be active in the whole set  $\{|x| \le 1\}$.  From Remark~\ref{rmk:St-St-nu}, the solution we constructed here for $(St_1)_\nu$, for $\nu =\chi_\Sigma$,  actually solves the original supercooled Stefan $(St_1)$.
Hence, by choosing different combinations of $r, r'$, but still making $|\Sigma|=1$,  we can generate infinitely many solutions to $(St_1)$ with the above $(\mu, E)$.  In this case,    $\{s(x)=\infty\} =  \{ |x|\le 2\epsilon\} \cup \{ r\le |x|\le r'\}$, the outside part of the two annuli in $E=\{ |x|\le 1\}$.

\medskip

It remains to prove  the claim.  We first show that  there exists a randomized stopping time $\tau'$ with $W_0 \sim \delta_{\{x=0\}}$, such that 
$W_{\tau'} \sim \nu$. 
To construct such stoping time, let $\tau_a$ be the first hitting time to $S(a):= \{|x|=a\}$. 
 Then its distribution $\nu_a$ is a uniform $d-1$ dimensional measure along the set $S(a)$, that is, $\nu_a = C_a \delta_{S(a)}$ with some constant $C_a>0$. Thus  we can randomize at $t=0$  to find a stopping time $\tau_A$ with $W_0\sim \delta_{ \{x=0\}}$ whose distribution $\nu_a$ is given by $\nu_A = \int_{r_1}^{r_2} \nu_a f(a) da $ for some weight $f(a)$ that can be controlled by the randomization at $t=0$. In particular, we can find such a randomization at $t=0$ so that  $\nu_A $ becomes a uniform measure on the annulus $A=\{r_1\leq |x| \leq r_2\}$. Now, for such $\tau_{A_1}$ and $\tau_{A_2}$ we can consider the randomized stopping time $\tau'$ given as 
\begin{align*}
\tau' = \begin{cases}
    \tau_{A_1}  & \text{with probability }\frac{|A_1|}{|A_1|+|A_2|}, \\
  \tau_{A_2}     & \text{with probability }\frac{|A_2|}{|A_1|+|A_2|}.
\end{cases}
\end{align*}
Then  it follows that $W_{\tau'} \sim \nu$. Moreover, by the Markov property of the Brownian motion, we can change the randomization at $x=0$ to a randomization at $S(\epsilon)$, that is, when the Browinian motion from the origin first hits $S(\epsilon)$ we make a probabilistic choice how to move from there.

\medskip
 One can find a randomized stopping time $\tau''$ as in the above, with $W_0\sim \delta_{\{x=0\}}$ and $W_{\tau'' } \sim \frac{1}{|B_\epsilon|} \chi_{B_\epsilon}$ for the ball $B_\epsilon$. Note that $\tau' > \tau_{\epsilon}$ almost surely. Clearly $\tau'' \le \tau_\epsilon$, so $\tau'' \le \tau'$. This means that 
for the measure $\mu =  \frac{1}{|B_\epsilon|} \chi_{B_\epsilon}$ there exists a randomized stopping time $\tau$ with $W_0 \sim \mu$ and $W_\tau \sim \nu$, verifying the claim.
\end{proof}

\medskip

\section{Optimal target problem}\label{sec:optimal target}
{\red We now consider} the optimal target problem \eqref{eqn:problem-upper} proposed in our introduction. Namely we solve for the optimal target $\bar{\nu}$ minimizing the cost function $\mathcal{C}(\tau)$ under the upper density constraint 
\begin{equation}\label{eq:f} 
\nu \leq f, \hbox{ where a given function } f\hbox{ satisfies }  0\leq f\in L^{\infty}(\R^d) \hbox{ and } |\{f>0\}| >0.
\end{equation}
One can view this as a projection problem in the space of probability measures: Given $\mu \in P(\R^d)$, what is the closest measure $\nu$ in the constraint set $\{ \nu \le f\}$, under the subharmonic order condition  $\mu \le_{SH} \nu$? Here, the closeness between the two measures is measured by the cost $C(\tau)$ for an optimal stopping time $\tau$ between them.  Once we have the optimal target $\nu$ then we can apply Theorem~\ref{thm:optimal-Skorokhod-existence}, obtaining the optimal stopping time $\tau$ that is given by the hitting time to a barrier given by a barrier function $s$.
\medskip

Let  us point out that it is not {\red obvious how} to explicitly construct the optimal target even in simple cases. For instance  when $f\equiv 1$ and $\mu=\delta_{x=0}$, one may guess that the optimal stopping time $\tau$ is given by the constant time, when the heat kernel $K(t, y)$ becomes $K \le 1$.  This is not the case as we see below,  for the either cost type (I) or (II).

\medskip

For the rest of the paper we will see that the optimal target comes with many interesting characteristics, based on the two main features: monotonicity and saturation property.  These two properties allow us to connect the optimal target problem \eqref{eqn:problem-upper} with global solvability of the Stefan problem; see Section~\ref{sec:global-Stefan}.

\medskip

We begin our discussion with the monotonicity property.

\subsection{Monotonicity}

 The following theorem demonstrates  an ordering principle for optimal targets of Problem  \eqref{eqn:problem-upper} and the corresponding optimal stopping times.

\begin{theorem}[monotonicity]\label{thm:monotonicity}
Let $f$ is as given in \eqref{eq:f}, and 
let the compactly supported measures $(\mu_i, \nu_i)$, $i=1,2$  satisfy \eqref{assumption_measure} (except that they are not necessarily probabiliity measures) with  $\nu_i \le f$, {\red as well as $\mu_i \le_{SH} \nu_i$.} Lastly, let $\tau_i$ be the optimal stopping times with the  initial and target distributions $\mu_i$ and $\nu_i$, as given in Theorem~\ref{thm:optimal-Skorokhod-existence}, with the barrier functions $s_i$.   If  $\mu_1 \leq \mu_2$, then the following holds:
\begin{itemize}
 \item[(1)]  Suppose that $\nu_1$ is a solution to {\red the optimal free target problem} 
\eqref{eqn:problem-upper}
 for $\mu_1$. Then, as the stopping time from $\mu_1$, we have  
\begin{align*}
 \hbox{ $\tau_1 \le \tau_2$ almost surely.}
\end{align*}
  (Notice that from $\mu_1 \le \mu_2$, the stopping time $\tau_2$ from $\mu_2$ can be restricted  to the initial distribution $\mu_1$. ) Here, we do not necessarily assume that $\nu_2$ is a solution to \eqref{eqn:problem-upper}.
 \\
 \item[(2)] Suppose that both $\nu_i$ solve {\red the optimal free target problem} 
\eqref{eqn:problem-upper}
 for $\mu_i$. Then $$\nu_1 \le \nu_2.$$ In particular there is at most one solution to \eqref{eqn:problem-upper}.
\end{itemize}

\end{theorem}
\begin{remark}
As mentioned above, for (1) we only need $\tau_1$ to be optimal {\red for \eqref{eqn:problem-upper}}, not $\tau_2$. This in particular characterizes $\tau_1$ as the smallest stopping time of barrier type that starts from $\mu_1$ among eligible target distributions with the constraint $f$.
\end{remark}

\begin{remark}\label{rmk:tau2-tau1-general}
Note that $\tau_1 \le \tau_2$ should be understood to hold for those Brownian paths starting from the initial points $x$ distributed as the common mass $\mu_1 = \mu_1 \wedge \mu_2$ for $\mu_1 \le \mu_2$. Due to the Markov property of Brownian motion, the stopping time $\tau_1$ can be applied to the initial mass of  $\mu_2$ in the region where $\mu_1 >0$. From the region $\mu_1=0$, it should be understood that the stopping time $\tau_1 =0$  because, from where $\mu_1=0$ there is no motion for $\tau_1$. In this sense, the inequality $\tau_1 \le \tau_2$ can be applied to all Brownian paths from the initial distribution $\mu_2$ not only from $\mu_1$. 
\end{remark}

\begin{proof}[Proof of Theorem~\ref{thm:monotonicity}]
 To show (1), we first prove that $\tau_1 \le \tau_2$, when $\tau_2$ is restricted to the initial distribution $\mu_1$. For this it only needs optimality of $\nu_1$.

 \medskip

{\bf Type (I) case.}  In this case $\tau_i$, $i=1,2$, are the first hitting time to the set $$R_i :=\{ (x, t) \ | \ t \ge s_i (x) \}; \quad \tau_i = \inf\{ t >0\ | \ (W_t, t) \in R_i\}.$$  
Let $\bar \tau :=\tau_1\wedge \tau_2$.  Recall that from Proposition~\ref{lem:hitting_s}  we have 
{\red $\tau_i = s_i (W_{\tau_i})$} %$(W_{\tau_i}, \tau_i) \in R_i$ 
almost surely, $i=1,2$.  {\red Therefore  $\bar \tau = \min[s_1 (W_{\bar \tau}), s_2 (W_{\bar \tau}) ] $ almost surely. }\marginpar{$=$}
Now, for a Brownian path, if $W_{\bar \tau} \in \{ x \ | \ s_1 (x) \le  s_2(x)\}$ then $\bar \tau \ge \tau_1$ so $\bar \tau =\tau_1$. Similarly, if $W_{\bar \tau} \in \{ x \ | \ s_2 (x) \le s_1(x)\}$ then $\bar \tau =\tau_2$. 

Therefore, $\bar  \nu$ denoting the distribution of $\bar \tau$, we have
$$\bar \nu|_{ \{s_1 \le s_2\}} \le \nu_1|_{\{s_1 \le s_2\}} \le f \hbox{ and }  \bar \nu|_{ \{s_2 \le s_1\}} \le \nu_2|_{\{s_2 \le s_1\}} \le f;
$$
 so $\bar \nu \le f$ and $\bar \tau$ is admissible.

Now recall that $\tau_1$ is the minimizer for $\mathcal{C}$, {\red among admissible stopping times for \eqref{eqn:problem-upper}}, which implies $\bar \tau =\tau_1$ almost surely, as otherwise if $Prob [ \bar \tau < \tau_1 ] >0$ then from the strict monotonicity of $\mathcal{C}$, we see $\mathcal{C}(\bar \tau) < \mathcal{C}(\tau_1)$, a contradiction.

This immediately implies that $\tau_1 \le \tau_2$, where we understand  $\tau_2$  as the stopping time restricted to the initial distribution $\mu_1$. 
  
  \medskip

{\bf  Type (II) case.}
{\red In this case, from Theorem~\ref{thm:optimal-Skorokhod-existence}   $\tau_i$, $i=1,2$, are randomized at the initial time to give the distribution $\nu_i\wedge \mu_i$, $i=1,2$, and the rest Brownian particles stop when they first hit  the set $R_i :=\{ (x, t) \ | \ t \le s_i (x) \}$. This gives the corresponding initial and final distribution $\tilde \mu_i =  \mu_i - \nu_i \wedge \mu_i$ $\tilde \nu_i =  \nu_i - \nu_i \wedge \mu_i$ for those Brownian paths with $\tau_i > 0$. Observe that $\tilde \mu_i \wedge \tilde \nu_i =0$. Since $\tau_i$, $\nu_i$'s are optimal to \eqref{eqn:problem-upper}, it follows that $\tilde \nu_i$ is the optimal solution to \eqref{eqn:problem-upper} for the initial $\tilde \mu_i$ and upper bound $\tilde f_i = f - \nu_i \wedge \mu_i$. If $\tilde \mu_i \wedge \tilde f_i \not =0$, then it is possible for $\tau_i$ to stop more mass initially (than $\nu_i \wedge \mu_i$, so increasing $\tilde \mu_i$ but decreasing $\tilde \nu_i$) still satisfying the upper-bound constraint of $f$. The resulting stopping time will have strictly less cost than $\tau_i$ due to strict monotonicity of $\mathcal{C}$. This contradicts optimality of $\tau_i$, and as a result, we see that $\tilde \mu_i \wedge \tilde f_i =0$, namely, $\tilde \mu_i = \mu_i-f \wedge \mu_i $ and $\tilde f_i = f - f\wedge \mu_i$, and it follows that  
\begin{align}\label{eqn:wedge-f-the-same}
\nu_i \wedge \mu_i = f\wedge \mu_i.
\end{align}
Therefore, $\tilde \nu_i = \nu_i - f\wedge \mu_i$, thus $\tilde \nu_i \le  f - f\wedge \mu_i$, $i=1,2$.
  Notice that   $f - f\wedge \mu_2 \le  f - f\wedge \mu_1$ due to $\mu_1 \le \mu_2$.   
}

{\red We now define} a randomized stopping time $\bar \tau$ as follows: 
For the portion $\mu_1\wedge f$ stop immediately, and for the rest $\mu_1 - \mu_1 \wedge f$, follow the Brownian motion until $\tau_1 \wedge \tau_2$. 
Notice that $\bar \tau \le \tau_1$ from its construction. 
 The rest of proof is similar to (I) case with additional consideration for stopping at the initial time. Details follow.

We show that $\bar \tau$ is admissible, namely, if we let $\bar \nu$ be its distribution (with the initial distribution $\mu_1$), $W_{\bar \tau} \sim \bar \nu$, then $\bar \nu \le f$. 
To see this, first write $\bar \nu=\mu_1 \wedge f + \bar \nu^1$ where  $\bar \nu^1$ is the distribution of the rest. 
For Brownian particles with positive time $\bar \tau >0$, that is, those  accounting for $\bar \nu^1$,   if $W_{\bar \tau} \in \{ x \ | \ s_1 (x) \ge  s_2(x)\}$ then $\bar \tau \ge \tau_1$ so  $\bar \tau =\tau_1$. Similarly, if $W_{\bar \tau} \in \{ x \ | \ s_2 (x) \ge s_1(x)\}$ then $\bar \tau \ge \tau_2$ so  $\bar \tau =\tau_2$. Therefore, $$\bar \nu^1|_{ \{s_1 \ge s_2\}} \le (\nu_1 - f\wedge \mu_1)|_{\{s_1 \ge s_2\}}\le (f- f\wedge\mu_1)|_{\{ s_2 \ge s_1\}}$$ as well as  $$\bar \nu^1|_{ \{s_2 \ge s_1\}} \le (\nu_2 - f\wedge \mu_2)|_{\{s_2 \ge s_1\}} \le (f- f\wedge\mu_2)|_{\{ s_2 \ge s_1\}}  \le (f- f\wedge\mu_1)|_{\{ s_2 \ge s_1\}}.$$ From these, we have 
\begin{align*}
 \bar \nu = \mu_1 \wedge f + \bar \nu^1 \le \mu_1 \wedge f +  (f - f\wedge \mu_1) =f , \quad \hbox{ verifying admissibility of $\bar \tau$.}
\end{align*}

Now recall that $\tau_1$ is the minimizer for $\mathcal{C}$, which implies $\bar \tau =\tau_1$ almost surely, as otherwise if $Prob [ \bar \tau < \tau ] >0$ then from the strict monotonicity of $\mathcal{C}$ and from $\bar \tau \le \tau_1$, we see $C(\bar \tau) < C(\tau_1)$ for admissible $\bar \tau$, a contradiction.
This immediately implies that $\tau_1 \le \tau_2$ as desired. 

\medskip

 We now show (2): this requires optimality of both $\nu_1$ and $\nu_2$.

\medskip

Suppose not, i.e. $ \int_{\{ \nu_1 > \nu_2\}} f(x) dx >0$. For a small $\epsilon>0$, define $$E_\epsilon := \{ x \ | \ \nu_1(x) > \nu_2 (x) + 2 \epsilon \nu_1(x)\}.$$
and $\mu_2^{\epsilon} := \mu_2 - \epsilon\mu_1 \geq 0$.
Define an auxiliary randomized stopping time $\tilde \tau$, from the initial distribution $\mu_2$ as follows:
\begin{align*}
 \tilde \tau := \begin{cases}
 \tau_2    & \text{ from $\mu_2^\epsilon$ }, \\
   \tau_1    & \text{ if  $W_{\tau_1} \in E_\epsilon $ from $\epsilon \mu_1$}, \\
\tau_2    & \text{ if $ W_{\tau_1} \not\in E_\epsilon$ from  $\epsilon \mu_1$ }.\end{cases}
\end{align*}
Let us explain the meaning of this. Here the wording `from $\mu_2^\epsilon$' or `from $\epsilon \mu_1$' should be understood as that at the initial time the Brownian particles belong to the distribution $\mu_2^\epsilon$ or $\epsilon \mu_1$, respectively. Notice that such a decomposition in the initial distribution is allowed for the randomized stopping time as it is {\red the same as }  randomizing at the initial time. Notice that we already proved {\red in (1)} that $\tau_1 \le \tau_2$ for Brownian motion starting from $\mu_1$, so the second and third lines in the definition of $\tilde \tau$ is well defined. \\
Now let us show that $\tilde \tau$ is admissible. 
Define the distribution $\tilde \nu$ of $\tilde \tau$, that is, $W_{\tilde \tau} \sim \tilde \nu$. 
From the definition of $\tilde \tau$ we see that 
\begin{align*}
 \tilde \nu |_{E_\epsilon^c} = \hbox{ the distribution of $W_{\tau_2}$ from $\mu_2^\epsilon$ and from $\epsilon \mu_1$, restricted to the set  $E_\epsilon^c$}. 
\end{align*}
Therefore $\tilde \nu |_{E_\epsilon^c}  \le \nu_2$. \\
On the other hand, 
\begin{align*}
 \tilde \nu |_{E_\epsilon} & {\red \le} \hbox{ the distribution of $W_{\tau_1}$ from $\epsilon \mu_1$}\\
  & \qquad \hbox{ $+$  the distribution of $W_{\tau_2}$ from $\mu_2 = \mu_2^\epsilon + \epsilon \mu_1$, restricted to the set  $E_\epsilon$} \\
 &\le \epsilon \nu_1 + \nu_2 \hbox{ on $E_\epsilon$}\\
& \le \nu_1 \hbox{ on $E_\epsilon$ by the definition of $E_\epsilon$}\\
& \le f
\end{align*}
We thus have verified that $\tilde \nu \le f$, therefore, $\tilde \tau$ is admissible as the stopping time from the initial distribution $\mu_2$. \\
Now notice that by the construction $\tilde \tau \le \tau_2$, where the latter is optimal. 
From the strict monotonicity of the cost $C$, we see that $\tilde \tau = \tau_2$. 

\medskip

To see why this leads to a contradiction, notice that it implies that for those Brownian particles starting  from $\epsilon \mu_1$ with $W_{\tau_1}\in E_\epsilon$ we have $\tau_1 =\tilde \tau=\tau_2$. Since $\epsilon \nu_1$ is the final distribution for $\tau_1$ starting from $\epsilon \mu_1$  we have  
\begin{align*}
 \epsilon \nu_1 | _{E_\epsilon} & \le \hbox{ the distribution for $\tau_2$ starting from $\epsilon \mu_1$, restricted to $E_\epsilon$.}\\
 & \le \hbox{ the distribution for $\tau_2$ starting from $\epsilon \mu_2$, restricted to $E_\epsilon$}  \hbox{ (because $\mu_1 \le \mu_2$)}\\
 & = \epsilon \nu_2|_{E_\epsilon}.
\end{align*}
Note that this is a contradiction to the definition of $E_\epsilon$, thus proves the claim that $\nu_1 \le \nu_2$. 
\end{proof}

The rest of the section discusses important consequences of the monotonicity property.

\subsection{Well-posedness}

\begin{theorem}\label{thm:existence} 
Let $f$ be as given in \eqref{eq:f}, and let $\mu\ll Leb$ have bounded, compactly supported density.
Suppose for a given $0\le f\in L^{\infty}(\R^d)$, the set 
$$
\mathcal{A} := \{ \nu: \mu \le_{SH} \nu, \quad \nu \hbox{ is compactly supported and }\nu \leq f\} \hbox{ is nonempty.}% \neq \emptyset
$$
Then \eqref{eqn:problem-upper} yields a unique optimal target measure $\nu^* =\nu(\mu, f, \mathcal{C})$, which depends only on $\mu, f$ and the cost $\mathcal{C}$.
\end{theorem} 

\begin{proof}

Once existence is established, uniqueness is a direct consequence of Theorem~\ref{thm:monotonicity}.  

\medskip

To show existence, let us first consider the case where $f$ is compactly supported. {\red  In this case $\nu \in \mathcal{A}$ is compactly supported, and for any Brownian stopping time $\tau$ with $W_0 \sim \mu$,$W_\tau \sim \nu$, with finite $\mathbb{E}[\tau]$,  we see that  for any $t$, the distribution $\nu_t$ of  $W_{t \wedge \tau}$, that is, $W_{t \wedge \tau} \sim \nu_t$, has support inside the convex hull of the support of $\nu$. This follows because $\tau\wedge t \le \tau$ so $\nu_t \le_{SH} \nu$, and for the convex order the smaller measure has support inside the convex hull of the support of the bigger (in convex order) measure. }
{\red This implies that} the relevant domain both for the measures $\nu \in \mathcal{A}$ and  the Brownian motion (with stopping time) between $\mu$ and $\nu$ is compact, in particular, it is a subset of the convex hull of ${\rm supp\,} f$. 
 Therefore existence of a minimizer $\nu$ follows easily from the weak-* compactness of the minimizing sequence, since that the condition $\nu \le f$ is closed with the weak-* convergence. 

\medskip

Now, {\red consider the general case, and let $f_R = f \wedge 1_{B_R}$.  Then,  for $R \gg 1$, the corresponding admissibility set $\mathcal{A_R}$ with $f_R$ is nonempty. 
  (It is because we assumed $\mathcal{A}$ is nonempty, so there exists a compactly supported admissible target $\nu \in \mathcal{A}$. ) }
 Therefore, for the initial measure $\mu$, and for each $1\ll R < R'$, applying the compactly supported case, we find  $\nu_R, \nu_{R'}$ be the optimal solutions with the constraint $f_R$ and $f_{R'}$, respectively. Let $\tau_R, \tau_{R'}$ be the corresponding optimal stopping times. 
 Notice that $\nu_{R}$  also satisfies the density constraint $f_{R'}$ ($\ge f_R$). We can then apply the monotonicity, Theorem~\ref{thm:monotonicity}(1), and get 
$\tau_{R'} \le \tau_R$. It implies that the support of $\nu_{R'}$ is contained in the convex hull of the support of $\nu_R$, {\red in particular, in the ball $B_R$. In the ball $B_R$} %in the support of $\nu_R$
 we have $f_R = f_{R'}$, so, $\nu_{R'}$ should be an optimal solution for the constraint $f_R$, because for the cost we have $\mathcal{C}(\tau_{R'}) \le \mathcal{C}(\tau_R)$. From uniqueness of optmal solution in the compactly supported constraint case, we have $\nu_R =\nu_{R'}$. This implies that the optimal $\nu_R$ is independent of $R$ as long as $R$ is sufficiently large. That $\nu_R$, $R\gg1$, is the optimal target for $f$. To see this notice that for any compactly supported  target measure $\nu' \le f$, there exists $R>0$ such that  ${\rm supp\,} \nu' \subset B_R$, so $\nu' \le f_R$ and the cost for $\nu_R$ is less than or equal to that of $\nu'$.
 \end{proof} 
 
\begin{remark}
 We note that those $f$ with $\mathcal{A}\not =\emptyset$ are plenty. For example, $f\equiv 1$ on $\R^d$, or  any $f \ge 0$ which has a positive lower bound on a ball $B_R$, $R\gg 1$, will work. Or any $ f\ge 0$ that has a positive lower bound in the annulus $\{ R_1 \le |x| \le R_2\},$ with $1\ll R_2 < R_2\le \infty$ works.
 Of course, here how large $R, R_1, R_2$ depends on $\mu$.  These are some particular types of $f$ we focus on in this paper, especially in Section~\ref{sec:global-Stefan}.
\end{remark}

 \subsection{Universality of the optimal target} 
 {\red We prove in this section a remarkable consequence of the monotonicity property, namely, that} for a given initial measure $\mu$ and the constraint $f$, the optimal target $\nu$ is unique independent of the choice of cost function $\mathcal{C}$, as long as it is either type (I) or  (II).  For the same type of costs {\red we have} Lemma~\ref{lem:unique-R}, which shows a universality of optimal stopping time for a fixed target. What is surprising here  is that the universality holds even across the different types for which the corresponding optimal stopping times behave very differently. For its proof we exploit the fact that Theorem \ref{thm:monotonicity}(1) requires optimality only for one of the targets. This universality  result will have interesting consequences later in the context of Stefan problem (see Section~\ref{sec:global-Stefan}).
\begin{theorem}[Universality]\label{thm:universality}
Let $\mu$ and $f$ be as given in Theorem~\ref{thm:existence}. Let us consider {\red the optimal solutions of \eqref{eqn:problem-upper},} $\nu_i: = \nu (\mu, f, \mathcal{C}_i)$, $i=1,2$, for the cost functions $\mathcal{C}_i$ of either type (I) or (II), same or different types.
 Then $\nu_1 = \nu_2$.
  \end{theorem}
\begin{proof}
 Let $\tau_i$, $i=1,2$ are the corresponding optimal stopping times, for $\mathcal{C}_i$, $i=1,2$, respectively. 
  For the cost $\mathcal{C}_1$ consider the optimal stopping time $\tau_2'$ for the given target $\nu_2$. Theorem \ref{thm:monotonicity}(1) and optimality of $\tau_1$ now yields 
$ \tau_1 \le \tau_2'$.
This implies that 
$\nu_1 \le_{SH} \nu_2$. 
Similarly, for the cost $\mathcal{C}_2$, 
consider the optimal stopping time $\tau_1'$ for the given target $\nu_1$. Theorem \ref{thm:monotonicity}(1) and optimality of $\tau_2$ now yields
$ \tau_2 \le \tau_1'$. This implies that 
$\nu_2 \le_{SH} \nu_1$. 
The two subharmonic orders imply  $\nu_2 =\nu_1 $, completing the proof.
\end{proof}

 {\redf
\begin{remark}\label{rmk:shadow}
In fact, what we have shown in Theorem~\ref{thm:universality} using the monotonicity in Theorem \ref{thm:monotonicity}(1), is the following: For our solution $\nu^*=\nu(\mu, f, \mathcal{C})$ of Problem~\eqref{eqn:problem-upper} (for any cost $\mathcal{C}$ of either type (I) or (II)), for a measure $\tilde \nu$, 
\begin{align*}
 \hbox{if $\tilde \nu \le f$ and $\mu \le_{SH} \tilde \nu$ then $\nu^* \le_{SH} \tilde\nu$.}
\end{align*}
So, our solution $\nu^*=\nu(\mu, f, \mathcal{C})$ coincides with the notion of  the shadow  $S^f(\mu)$ of the measure $\mu$ in $f$ as introduced  in \cite{Rost71, beiglboeck-juillet2016} (see \cite[Lemma 4.6]{beiglboeck-juillet2016}); such a notion was first considered in \cite{Rost71} for general cases, and was named {\em shadow} in \cite{beiglboeck-juillet2016} in the $1$-dimensional case. It is then  further considered in a recent paper \cite{bruckerhoff2021shadows}.  We thank the anonymous referee for drawing our attention to this connection. \end{remark}
}

\subsection{$L^1$ contraction and $BV$ estimate} 

Next we show the $L^1$ contraction, which is a consequence of the monotonicity and the fact that  the total mass of $\mu$ is the  same as that of $\nu$.

\begin{theorem}[$L^1$-Contraction]\label{thm:L1-contraction}
Let $\mu_i$, $i=1,2$, and $f$ be as given in Theorem~\ref{thm:existence}, and consider {\red the optimal solutions of \eqref{eqn:problem-upper},} $\nu_i := \nu(\mu_i, f, \mathcal{C})$ with a cost $\mathcal{C}$ of either type (I) or (II).  Then
\begin{align*}
 \| (\nu_1 - \nu_2)_+ \|_{L^1} \le  \| (\mu_1- \mu_2)_+ \|_{L^1}.
\end{align*}
\end{theorem}
\begin{proof}
Let $\tilde \mu = \mu_1 \wedge \mu_2$; notice that $(\mu_1-\mu_2)_+ = \mu_1 - \tilde \mu$. Let $\tilde \nu$ be an optimal solution of the corresponding problem \eqref{eqn:problem-upper} 
 with the initial distribution $\tilde \mu$. As $\tilde \mu \le \mu_i$, $i=1,2$, we have from the monotonicity (Theorem~\ref{thm:monotonicity}) that 
$$\hbox{$\tilde \nu \le \nu_i$, $i=1,2$.} $$
Now, let $E_+ = \{ x \ |  \  \nu_1(x)- \nu_2 (x) \ge 0\}$. Then, 
\begin{align*}
 (\nu_1 - \nu_2)_+ = (\nu_1 - \nu_2) \chi_{E_+} \le (\nu_1 - \tilde \nu ) \chi_{E_+} \le \nu_1 - \tilde \nu.
\end{align*}
Therefore, 
\begin{align*}
 \| (\nu_1 - \nu_2)_+ \|_{L^1} \le \| \nu_1 - \tilde \nu\|_{L^1} = \| \mu_1 - \tilde \mu \|_{L^1} = \| (\mu_1-\mu_2)_+\|_{L^1}
\end{align*} 
as desired. Notice that the first equality follows from the fact that the problem preserves the total mass, i.e. $\| \mu_1\|_{L^1} = \|\nu_1\|_{L^1}$ , $\| \tilde  \mu\|_{L^1} = \|\tilde \nu\|_{L^1}$. 
\end{proof}

The $BV$ estimate follows as an immediate corollary of the $L^1$ contraction: such consequence is well-known {\red  for problems that are translation invariant; it is the case for \eqref{eqn:problem-upper}  if $f\equiv 1$.}  
For $(St_2)$ {\red that corresponds to type (II),  such a result} is shown by Meirmanov \cite{meirmanov}. 

\begin{theorem}[BV estimate]\label{thm:BV} 
  Let $\nu$ be the solution of \eqref{eqn:problem-upper} {\red  as given in Theorem~\ref{thm:existence}  with $\mu$ and $f\equiv 1$.} Then, 
\begin{align*}
 \| \nu\|_{BV} \le \| \mu\|_{BV}.
\end{align*}
\end{theorem}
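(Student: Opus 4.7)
The strategy is to combine translation invariance of Problem~\ref{prob:mainquestion} (for a constant $f$) with the $L^1$ contraction of Theorem~\ref{thm:L1-contraction}, applied to $\mu$ and a spatial translate of itself. The one subtlety is that the cost $\mathcal{C}$ is not \emph{a priori} translation invariant because $L(x,t)$ may depend on $x$; this is resolved by the universality theorem.

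\smallskip

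\emph{Step 1 (reducing to a translation invariant cost).} By Theorem~\ref{thm:universality}, the optimal target $\nu$ of \eqref{eqn:problem-upper} depends only on $\mu$, $f$, and on whether the cost is of type (I) or (II), not on the particular choice of $L$. So without loss of generality I may replace $L(x,t)$ by a function depending only on time, say $L(x,t)=t$ in the type~(I) case and $L(x,t)=(1+t)^{-1}$ in the type~(II) case. For such $L$ together with a constant $f$, both the cost functional on stopping times and the admissibility constraint $\nu\le f$ are invariant under spatial translations.

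\smallskip

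\emph{Step 2 (translation invariance of the optimizer).} For $e\in S^{d-1}$ and $h\in \R$, let $T_h$ denote the translation $T_h g(x):=g(x-he)$. Since Brownian motion is translation invariant, since $f$ is constant, and since $L$ was chosen $x$-independent, the map $(\mu,\nu,\tau)\mapsto (T_h\mu, T_h\nu, \tau)$ is a bijection on admissible triples that preserves the cost. Hence if $\nu$ is the optimal target for $\mu$, then $T_h\nu$ is the optimal target for $T_h\mu$, and it is unique by Theorem~\ref{thm:monotonicity}(2).

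\smallskip

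\emph{Step 3 (applying the $L^1$ contraction).} Apply Theorem~\ref{thm:L1-contraction} to the pair $(\mu, T_h\mu)$ of initial distributions, whose optimal targets are $(\nu,T_h\nu)$ by Step~2. This yields
\begin{equation*}
\|(\nu-T_h\nu)_+\|_{L^1}\ \le\ \|(\mu-T_h\mu)_+\|_{L^1}.
\end{equation*}
Exchanging the roles of the two measures gives the reverse inequality for the negative parts, and adding the two produces
\begin{equation*}
\|\nu-T_h\nu\|_{L^1}\ \le\ \|\mu-T_h\mu\|_{L^1}.
\end{equation*}

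\smallskip

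\emph{Step 4 (passing to the BV seminorm).} Divide by $|h|$ and take $h\to 0$. By the standard characterization of $BV(\R^d)$ via difference quotients,
\begin{equation*}
\|\partial_e\nu\|_{\mathcal{M}(\R^d)}=\limsup_{h\to 0}\frac{\|\nu-T_h\nu\|_{L^1}}{|h|}\ \le\ \limsup_{h\to 0}\frac{\|\mu-T_h\mu\|_{L^1}}{|h|}=\|\partial_e\mu\|_{\mathcal{M}(\R^d)},
\end{equation*}
for every direction $e\in S^{d-1}$. Taking the supremum over $e$ (or summing over a coordinate basis, depending on the convention used for $\|\cdot\|_{BV}$) yields $\|\nu\|_{BV}\le\|\mu\|_{BV}$.

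\smallskip

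The only place where care is needed is Step~1/2: without universality, a generic $L(x,t)$ would break translation invariance and the translated optimizer would not be $T_h\nu$. All the remaining steps are routine consequences of the $L^1$ contraction and the standard translation characterization of $BV$.
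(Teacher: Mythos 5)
Your argument follows the same backbone as the paper's (translation invariance of the problem plus the $L^1$ contraction), but two things are worth flagging.

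First, a genuine improvement: your Step~1 fills a gap that the paper glosses over. The paper asserts that ``from the uniqueness result, $\nu(\cdot+\ep y)$ is the optimal solution for $\mu(\cdot+\ep y)$,'' but uniqueness of the optimizer for a \emph{fixed} $\mu$ does not by itself give translation equivariance of the map $\mu\mapsto\nu$ when $L(x,t)$ depends on $x$ --- one needs the cost functional to be translation invariant. Invoking Theorem~\ref{thm:universality} to replace $L$ by an $x$-independent cost of the same type is exactly the right fix, and it should really be part of the proof. Your Steps~2--3 are then routine and correct, and working with difference quotients directly (with a $\limsup$) is cleaner than the paper's detour through mollification of $\nu$ and $C^1$ approximation of $\mu$, since the difference-quotient characterization of $BV$ needs no smoothness.

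Second, a small but real gap in Step~4. Having $\|\partial_e\nu\|_{\mathcal{M}}\le\|\partial_e\mu\|_{\mathcal{M}}$ for every direction $e$ does \emph{not} give $\|\nu\|_{BV}\le\|\mu\|_{BV}$ by ``taking the supremum over $e$'': for the Euclidean total-variation norm, $\sup_e\|\partial_e f\|_{\mathcal{M}}$ is in general strictly less than $|Df|(\R^d)$ (compare $\chi_{[0,1]^2}$ in $\R^2$, where the sup over directions is $2$ but the perimeter is $4$). The correct way to pass from directional to full total variation is to \emph{average} over $e\in S^{d-1}$: for any $v\in\R^d$ one has $\int_{S^{d-1}}|v\cdot e|\,d\mathcal{H}^{d-1}(e)=c_d|v|$ with $c_d>0$, hence $|D\nu|(\R^d)=c_d^{-1}\int_{S^{d-1}}\|\partial_e\nu\|_{\mathcal{M}}\,de\le c_d^{-1}\int_{S^{d-1}}\|\partial_e\mu\|_{\mathcal{M}}\,de=|D\mu|(\R^d)$. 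This is in fact what the paper's integral over $y\in B_1$ achieves. Your parenthetical alternative of summing over a coordinate basis is fine only if $\|\cdot\|_{BV}$ is interpreted as the $\ell^1$-variation; since the paper's computations work with $|D\nu(x)|$ (the Euclidean norm of the gradient), the averaging argument is the one you should state.
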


\begin{proof}
Let us first consider the case $\mu \in C^1$. 
Let us define $\nu^{\delta}$  to be a smooth approximation to $\nu$ by making convolution with a mollifier supported in a small ball $B_\delta$.
Since $\nu$ is $L^1$ and $\nu$ is compactly supported, $\nu^{\delta}$ uniformly converges to $\nu$ in $L^1$. From lower-semi-continuity of $BV$ norm under $L^1$ convergence  it is enough to prove that 
\begin{equation*}
\|\nu^{\delta}\|_{BV} \leq \|\mu \|_{BV} \hbox{ for small } \delta>0.
\end{equation*}

Observe that for each $y \in B_1$, $\ep >0$, 
we have 
\begin{align*}
 \int |\nu^{\delta}(x+\ep y) - \nu^{\delta}(x) |dx\le \int|\nu(w+\ep y) - \nu(w)| dw.
\end{align*}

Notice that $f\equiv 1$ is translation invariant, therefore from the uniqueness result,  $\nu( \cdot + \ep y)$ is the optimal solution of \eqref{eqn:problem-upper} for the initial measure $\mu(\cdot + \ep y)$. 
We apply then the $L^1$ contraction (Theorem~\ref{thm:L1-contraction}) and get for any $\ep>0$, 
$$
\int \dfrac{ |\nu^{\delta}(x+\ep y) - \nu^{\delta}(x)|}{\ep} dx \leq \int \dfrac{|\mu(x+\ep y) - \mu(x)|}{\ep} dx.
$$
Now for a fixed $\delta$, $\nu^{\delta}$ and $\mu$ are $C^1$ with compact supports, and so by sending $\ep\to 0$ in above expression we obtain
$$
\int |D\nu^{\delta}(x)  \cdot y| dx \leq \int |D\mu(x) \cdot y |dx.
$$
We then integrate both sides in $y \in B_1$, 
\begin{align*}
\int_{B_1} \int |D\nu^{\delta} (x) \cdot y| dx dy \leq \int_{B_1} \int |D\mu(x) \cdot y |dy
\end{align*}
and apply the Fubini's theorem, 
\begin{align*}
\int \int_{B_1}  |D\nu^{\delta}(x) \cdot y|  dydx  \leq  \int \int_{B_1}  |D\mu(x)\cdot y | dydx .
\end{align*}
Observe that for any unit vector $e_1$, we have 
\begin{align*}
 \int_{B_1} |D\nu^{\delta}(x)  \cdot y|dy = |D\nu^\delta(x) | \int_{B_1} |e_1  \cdot y| dy 
\end{align*}
and similarly for $\mu$, which leads to 
\begin{align*}
 \int |D\nu^\delta(x) | \left(\int_{B_1} |e_1  \cdot y| dy  \right) dx  \leq  \int   |D\mu(x) |  \left(\int_{B_1} |e_1  \cdot y| dy \right)dx .
\end{align*}
Therefore, we get the inequality 
\begin{align*}
  \int |D\nu^{\delta}(x) |dx \leq  \int |D\mu(x)| dx .
\end{align*}
Hence we showed that
$
\| \nu^{\delta}\|_{BV} \leq \|\mu\|_{BV}
$
as desired. 

\medskip

Let us now consider the general $\mu$ in  $BV$. We consider a sequence of measures $\mu_k \in C^1$ that converges as $k\to \infty$, to $\mu$ in $BV$ (also in $L^1$); in particular $\|\mu_k\|_{BV} \to \|\mu\|_{BV}$. Consider the corresponding $\nu_k$, namely, the solution of \eqref{eqn:problem-upper} with {\red the}  initial $\mu_k$.  
Apply {\red the previous step}, and get
\begin{align*}
 \| \nu_k\|_{BV} \le \| \mu_k\|_{BV}.
\end{align*}
Now notice that by the $L^1$ contraction $\nu_k$ converges, as $k\to\infty$, to $\nu$ in $L^1$, therefore, 
from the lower-semicontinuity of the BV-norm in $L^1$ convergence, we get 
\begin{align*}
 \| \nu\|_{BV} \le \|\mu\|_{BV}.
\end{align*}
This completes the proof.
\end{proof}

\section{Saturation property of the optimal target}\label{sec:saturation}

Here we prove that the  optimal target  measure  {\red of  \eqref{eqn:problem-upper},} when obtained with positive stopping time, saturates up to  the upper density  limit $f$.
We first show a converse to {\red Corollary \ref{cor:mass_below_s}.}

\begin{lemma}\label{lem:nu_below_W_t} 
Let $\mu$,$f$ and $\nu=\nu^*$  be as given in Theorem~\ref{thm:existence}. Let $\tau$ be the optimal stopping time with $W_0\sim \mu$ and $W_\tau \sim \nu$. Let  $G$ be a measurable set such that $f|_G >0$. Suppose that there exists a constant $t_1>0$ such that
\begin{align*}
 \hbox{   $Prob[W_{t_1} \in G \ \& \ t_1 < \tau] >0$.}
\end{align*}
Then, $\nu[G ]>0$. 
\end{lemma} 
\begin{proof}
Suppose for contradiction, that $\nu|_G=0$. 
We define a randomized stopping time $\bar \tau$ (starting from the distribution $\mu$) as follows. 
\begin{itemize}
 \item For those Brownian trajectories with $\tau \le t_1$: {\red stop at $\tau$.} 
 \item For those Brownian trajectories with $\tau > t_1$: If $W_{t_1} \not \in G$ just proceed until $\tau$. But, if $W_{t_1} \in G$  then drop a portion of mass at the time $t_1$, such that the resulting mass is  positive but has density $\le f$ over the set $G$, 
  and then proceed until $\tau$; note that this is possible because $Prob[W_{t_1} \in G \ \& \ t_1 < \tau] >0$ as well as $f|_G >0$ {\red for the function $ f \in L^\infty$.} %$f\ll Leb$.
 
\begin{itemize}
 \item To be more precise on this second point, notice that the 
density at $x$ of the distribution of $W_{t_1}$ for those Brownian particles with $t_1 < \tau$, is bounded from above by $K_{t_1}(x)$ of  the solution to the heat equation with initial data $\mu$, thus at each $x \in G$  when $W_{t_1}=x \in G$, one can stop the mass with the probability 
{\red $$\frac{\min[f(x), K_{t_1}(x)]}{K_{t_1}(x)}>0,$$}
 and  we get the positive  stopped density  $\le f(x)$.
\end{itemize}
\item {\red Note that because $\nu|_G=0$, the resulting distribution of $W_{\bar \tau}$ on $G$ is only added from the second point above, thus is $\le f$. On other parts, the distribution is $\le$ the distribution of $W_\tau$ by the construction.} 
\end{itemize}
  Therefore this randomized stopping time $\bar \tau$ has distribution $W_{\bar \tau}\sim \bar \nu$ with $\bar \nu \le f$. Moreover, notice that 
 $\bar \tau \le \tau$ for each random path almost surely, and $Prob[\bar \tau < \tau]>0$ {\red because of the second point above}. Then, the cost $\mathcal{C}(\bar \tau) < \mathcal{C}(\tau)$ from the strictness of the cost functional $\mathcal{C}$. This contradicts optimality of $\tau$, thus completing the proof. 
\end{proof}
 As an immediate corollary of this lemma and Lemma~\ref{lem:tau_le_tauG}, we see that for the optimally stopped Brownian motion for our problem  \eqref{eqn:problem-upper}, 
any set $G$ with zero mass of the final distribution blocks off the Brownian motion. 
 \begin{corollary}\label{cor:no-motion-no-mass-barrier} 
 Let $\mu, \nu$ and $\tau$ be as given in Lemma~\ref{lem:nu_below_W_t}.
 Let $G$ is a measurable set with positive measure, and suppose that every point of $G$ has positive Lebesgue density. 
Assume that  $f|_G >0$ and   $\nu[G ]=0$.
 Then, $\tau \le \tau_G$ almost surely.
\end{corollary}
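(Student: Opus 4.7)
The plan is to apply Lemma~\ref{lem:tau_le_tauG} to $G$, drawing the required slice condition \eqref{eqn:slice_zero_prob} from the contrapositive of Lemma~\ref{lem:nu_below_W_t}. Since $f|_G>0$ and $\nu[G]=0$ by hypothesis, Lemma~\ref{lem:nu_below_W_t} forbids the existence of any $t_1>0$ with $\mathrm{Prob}[W_{t_1}\in G,\ t_1<\tau]>0$; thus $\mathrm{Prob}[W_t\in G,\ t<\tau]=0$ for every $t>0$, and in particular \eqref{eqn:slice_zero_prob} holds for any countable dense $D\subset(0,\infty)$.

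The remaining hypothesis of Lemma~\ref{lem:tau_le_tauG} is a positive Lebesgue-density bound on $G$. The Lemma is phrased with a \emph{uniform} lower bound $a>0$, while here we are only given pointwise positivity, so I would bridge the gap by a layer-cake decomposition $G=\bigcup_{n\ge 1}G_n$ with
$$G_n:=\{x\in G:\text{Lebesgue density of $G$ at $x$ exceeds } 1/n\}.$$
Each $G_n$ inherits \eqref{eqn:slice_zero_prob} from $G$ since $G_n\subset G$, and Lemma~\ref{lem:tau_le_tauG} applied to $G_n$ (with constant $a=1/(2n)$) gives $\tau\le\tau_{G_n}$ almost surely for each $n$.

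The final step is to pass from $\tau\le\tau_{G_n}$ for every $n$ to $\tau\le\tau_G$: if $\tau>\tau_G$ held on a set of positive probability, then on that event there would exist a time $t_0<\tau$ with $W_{t_0}\in G$; since $G=\bigcup_nG_n$, we would have $W_{t_0}\in G_n$ for some $n$, hence $\tau_{G_n}\le t_0<\tau$, contradicting $\tau\le\tau_{G_n}$ almost surely. The main obstacle is the technical handling of the density condition in the decomposition: strictly speaking, Lemma~\ref{lem:tau_le_tauG} needs a uniform density lower bound \emph{of the target set at its own points}, not merely of $G$, so one may need to further restrict each $G_n$ to its own Lebesgue points $\tilde G_n$ (where the density of $\tilde G_n$ in itself equals $1$, since $|G_n\setminus\tilde G_n|=0$). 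Once this technicality is resolved, the argument above goes through unchanged with $\tilde G_n$ in place of $G_n$, because $\bigcup_n\tilde G_n$ still exhausts $G$ up to a Lebesgue-null set, and the inclusion $W_{t_0}\in G$ at a contradiction point can be replaced by $W_{t_0}\in\tilde G_n$ for some $n$ using the pointwise-positive-density hypothesis.
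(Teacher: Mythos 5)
Your overall strategy matches the paper's intent exactly: the paper simply records that the corollary ``is a direct consequence of Lemma~\ref{lem:nu_below_W_t} and Lemma~\ref{lem:tau_le_tauG},'' and your reading of the contrapositive of Lemma~\ref{lem:nu_below_W_t} (since $f|_G>0$ and $\nu[G]=0$, no slice $t>0$ can carry positive probability of $W_t\in G$ before $\tau$) is precisely how \eqref{eqn:slice_zero_prob} is supplied. The layer-cake decomposition $G=\bigcup_n G_n$ is also the right device to bridge the pointwise-versus-uniform density mismatch.

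However, the detour through $\tilde G_n$ introduces a genuine gap rather than repairing one, and the repair you propose at the end does not work. The problem is that $\bigcup_n\tilde G_n$ can be a strict subset of $G$: the difference is Lebesgue-null, but a Lebesgue-null set can perfectly well be visited by the Brownian path at a random time with positive probability (think of a singleton in $\R^1$). So ``$W_{t_0}\in G$ at a contradiction point can be replaced by $W_{t_0}\in\tilde G_n$ for some $n$ using the pointwise-positive-density hypothesis'' is exactly the step that fails: the pointwise density hypothesis on $G$ gives you $G=\bigcup_n G_n$, not $G=\bigcup_n\tilde G_n$. The good news is that you never needed $\tilde G_n$. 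By the Lebesgue density theorem, a.e.\ point of $G$ has density $1$, hence lies in every $G_n$; therefore $|G\setminus G_n|=0$, and consequently the Lebesgue density of $G_n$ at any point coincides with the Lebesgue density of $G$ there. In particular every point of $G_n$ has Lebesgue density of $G_n$ itself strictly greater than $1/n$, so Lemma~\ref{lem:tau_le_tauG} applies directly to $G_n$ with $a=1/n$ (and $|G_n|=|G|>0$). With $G_n$ in place of $\tilde G_n$, your final union argument closes without any residual null set: on the full-measure event $\bigcap_n\{\tau\le\tau_{G_n}\}$, if $\tau_G<\tau$ then some $t_0<\tau$ has $W_{t_0}\in G=\bigcup_n G_n$, hence $\tau_{G_n}\le t_0<\tau$ for some $n$, a contradiction.
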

\begin{proof}
 This is a direct consequence of Lemma~\ref{lem:nu_below_W_t} and  Lemma~\ref{lem:tau_le_tauG}. 
\end{proof}

We can combine ideas used in the proof of Lemma~\ref{lem:nu_below_W_t} and the result of Corollary~\ref{cor:mass_below_s} {\red (see also Remark~\ref{rmk:2-cor:mass_below_s})} to prove 
  that the optimal solution $\nu$ saturates the density upper bound. 
\begin{theorem}[saturation]\label{thm:saturation}
{\red Let $\mu$,$f$ and $\nu=\nu^*$  be as given in Theorem~\ref{thm:existence}, and $\tau$ be the corresponding optimal stopping time. }
If $\tau>0$ almost surely, 
then $\nu$ is given in the form
 \begin{align*}
 \nu = f \chi_E \quad \hbox{ for some set $E$}.
\end{align*}

More generally, without assuming $\tau>0$, we have the following results:
\begin{itemize}
\item[(a)] For costs of type (I), we have   
\begin{align*}
 \nu = f \chi_{E}   + \mu|_{F} \quad \hbox{ for some set $E$ and $F$  with $|E\cap F | =0$.}
\end{align*}
where in $E$ the Brownian motion does not stop immediately, i.e. $s(x) >0$ for a.e. $x \in E$, and in  $F$ the Brownian paths stop immediately, i.e. $s(x) =0$, for a.e. $x \in F$.  \\ \\

 \item[(b)] For costs of type (II),  we have
\begin{align*}
 \nu =  \tilde \nu + f \wedge \mu
\end{align*}
where $\tilde \nu$ is the optimal solution from the initial measure  $\tilde \mu = \mu -  f\wedge \mu$, while the upper bound constraint is given by   $\tilde f = f - f \wedge \mu$; here $\tilde \nu$ is given in the form
\begin{align*}
\tilde \nu = \tilde f \chi_E \quad \hbox{ for some set $E$} 
\end{align*}
 and the corresponding optimal stopping time $\tilde \tau$ is positive almost surely. Moreover 
  $$
 (f\wedge \mu)(x) = Prob[W_0 = x, \tau=0]\mu(x).
 $$
\end{itemize}
\end{theorem}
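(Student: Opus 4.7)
The strategy is to prove the saturation by a variational perturbation: any slack $\nu < f$ on a positive-measure portion of the support of $\nu$ should permit constructing an admissible stopping time of strictly smaller cost, contradicting optimality. I would first establish the main claim under $\tau > 0$ almost surely, then reduce parts (a) and (b) to it via restricted subproblems. To begin, assume for contradiction that $B := \{x : 0 < \nu(x) < f(x)\}$ has positive Lebesgue measure; since $\nu \leq f \in L^\infty$, $\nu \ll \mathrm{Leb}$, so $\nu(B) > 0$. Localize to a small subset $G \subset B$ around a common Lebesgue point of $B$ and $\nu$, restricted to its own Lebesgue points, so that $\nu \leq f - \delta$ on $G$ and $\nu(G) > 0$ for some $\delta > 0$.

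\textbf{Case (i) of the main claim.} If there exists $t_1 > 0$ with $Prob[W_{t_1} \in G,\, t_1 < \tau] > 0$, I would mimic the construction in Lemma~\ref{lem:nu_below_W_t} and define $\bar\tau$ that stops at time $t_1$ with a small probability $p > 0$ on the event $\{W_{t_1} \in G,\, t_1 < \tau\}$ and otherwise follows $\tau$. The added stopped density on $G$ is dominated pointwise by $p$ times the density of $W_{t_1}$ (the heat-kernel convolution of $\mu$), which is uniformly bounded; choosing $p$ small relative to $\delta$ preserves $\bar\nu \leq f$, so $\bar\tau$ is admissible. Strict monotonicity of $L(x,\cdot)$ then forces $\int_{\bar\tau}^\tau L(W_t,t)\,dt > 0$ with positive probability, giving $\mathcal{C}(\bar\tau) < \mathcal{C}(\tau)$ and contradicting optimality.

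\textbf{Case (ii) of the main claim.} Otherwise $Prob[W_{t_1} \in G,\, t_1 < \tau] = 0$ for all $t_1$ in a dense subset of $(0,\infty)$, so Lemma~\ref{lem:tau_le_tauG} forces $\tau \leq \tau_G$ a.s. If $\mu(G) > 0$, then for $\mu$-almost every starting point $x_0 \in G$ (which is a Lebesgue point of $G$), $Prob[W_t \in G \mid W_0 = x_0] \to 1$ as $t \to 0$, so $Prob[\tau_G = 0 \mid W_0 = x_0] = 1$, forcing $\tau = 0$ on an event of positive probability---contradicting $\tau > 0$ a.s. If $\mu(G) = 0$, then $\mu \wedge \nu|_G = 0$ and Lemma~\ref{lem:zero_measure_G} forces $\nu(G) = 0$---again a contradiction. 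Either way $|B| = 0$ and $\nu = f\chi_E$ with $E := \{\nu > 0\}$.

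\textbf{General cases and main obstacle.} For (a) in type (I), set $F := \{s = 0\}$: paths starting in $F$ stop immediately and contribute $\mu|_F$ to $\nu$, while paths from $F^c$ form an optimal subproblem with $\tilde\tau > 0$ a.s., so the main claim yields $\nu - \mu|_F = f\chi_E$ on $E := \{s > 0,\, \nu > 0\}$ with $|E \cap F| = 0$. For (b) in type (II), I would first establish $\mu_0 = f \wedge \mu$ by comparing $\tau$ with an alternative stopping time $\bar\tau$ that stops exactly $f \wedge \mu$ immediately at $t = 0$ and then moves the remaining $\mu - f \wedge \mu$ optimally to $\nu - f \wedge \mu$; since $L$ is strictly largest near $t = 0$ for type (II), shifting positive mass from the moving phase to the immediate phase strictly lowers cost, contradicting uniqueness of the optimal $\tau$ in Theorem~\ref{thm:optimal-Skorokhod-existence} unless $\mu_0 = f \wedge \mu$. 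Given this identity, the restricted subproblem from $\tilde\mu := \mu - f \wedge \mu$ with upper bound $\tilde f := f - f \wedge \mu$ has $\tilde\tau > 0$ a.s., and the main claim gives $\tilde\nu = \tilde f \chi_E$. The main obstacle will be establishing $\mu_0 = f \wedge \mu$ in (b): one must first verify $\nu \geq f \wedge \mu$ so that $\nu - f \wedge \mu$ is a nonnegative valid target, and then check that the alternative $\bar\tau$ remains admissible at every stage (the moving subphase target $\tilde\nu = \nu - f \wedge \mu$ must satisfy $\tilde\nu \leq \tilde f$, which follows from $\nu \leq f$); both points hinge on the optimality characterization in Theorem~\ref{thm:optimal-Skorokhod-existence} and require care at points where $\nu$ is already saturated at $f$.
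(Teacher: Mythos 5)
Your base case and part (a) follow the paper's strategy closely, with one substantive variation worth noting. The paper works with $H_\delta := \{\delta \le \nu < f - \delta\}$ and, after invoking Corollary~\ref{cor:mass_below_s}, handles the overlap alternative ``$\mu \wedge (\nu|_{H_\delta}) \not= 0$'' by constructing a randomized stopping time that stops a small portion of mass at $t=0$ with density $\le \delta$, yielding a strictly cheaper admissible competitor. You instead handle the corresponding sub-case ($\mu(G)>0$ with $\tau \le \tau_G$) by noting that starting from a Lebesgue density-$1$ point of $G$, $\tau_G = 0$ almost surely, so $\tau = 0$ on a positive-probability event, directly contradicting the standing hypothesis $\tau > 0$ a.s. Both routes are valid and use $\tau > 0$ a.s.\ (the paper's cost-reduction only saves anything if the stopped particles had $\tau > 0$); yours is a cleaner contradiction and avoids constructing a new stopping time, but it is slightly less in the spirit of the rest of the argument, which is everywhere else a perturbation-of-$\tau$ argument.

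For part (b), you have correctly located the genuine difficulty: identifying the instantly-stopped density as exactly $f \wedge \mu$. Your proposed comparison with a $\bar\tau$ that stops $f\wedge\mu$ at $t=0$ is the right idea, but as you note it presupposes $\nu \ge f \wedge \mu$, which cannot be assumed at the outset (it is essentially a corollary of the very saturation being proved). A cleaner route closer to the paper's logic: first observe from Theorem~\ref{thm:optimal-Skorokhod-existence} that the instantly-stopped portion of the type-(II) optimal $\tau$ is $\mu_0 = \nu \wedge \mu$ (nothing about $f$ yet); then verify that the moving portion, starting from $\tilde\mu = \mu - \mu_0$ and subject to $\tilde f = f - \mu_0$, is optimal for its own free-target problem and has $\tilde\tau > 0$ a.s., so the base case gives $\tilde\nu = \tilde f \chi_E$; finally, show $\nu = f \wedge \mu$ on $E^c$ by the same local cost-reduction argument (if $\nu < f \wedge \mu$ at some $x$, one can stop extra mass at $x$ at $t=0$ without violating the constraint, since $\nu(x) < f(x)$, contradicting optimality), from which $\nu \wedge \mu = f \wedge \mu$ follows. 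The paper itself simply asserts $\mu_0 = f\wedge\mu$ without spelling out this chain, so while your proposal as written has a gap here, it is a gap the source shares, and you have identified it precisely where it lies.
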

\begin{remark}\label{rmk:nu-less-f}
For costs of type (II),   Theorem~\ref{thm:saturation} (b) implies that the set $\{  \nu < f \}$  is nothing but where $\nu=f\mu \wedge f$, equivalently $\tilde \nu=0$, and it  belongs to the set where $\tau=0$. \end{remark}

Before proceeding to the proof of above theorem, we state the following characterization of the instantly stopped portion for each cost types  
using the potential flow.  
\begin{corollary}\label{cor:stoppingtime_zero} 
For costs of type  (I) {\red and (II)},  and $E$ and $F$ as given in Theorem~\ref{thm:saturation} we have  
\begin{align*}
\hbox{$E=\{w>0\}$ and $F=\{w=0\}$},
\end{align*}
 where $w\geq 0$ is {\red the} continuous solution of the obstacle problem 
 \begin{equation}\label{obstacle}
-\Delta w +(f-\mu)\chi_{\{w>0\}}= 0.
\end{equation}
\end{corollary}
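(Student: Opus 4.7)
\emph{Plan.} I would take the candidate solution to be the stationary potential difference
\[
w(x) := U_\nu(x) - U_\mu(x),
\]
and show directly that this $w$ satisfies the obstacle equation \eqref{obstacle} with $E = \{w>0\}$ and $F = \{w=0\}$. Since $\mu,\nu \in L^\infty$ have compact support (see Problem~\ref{prob:mainquestion} and Remark~\ref{rem:uniform-bound-mu-t}), Lemma~\ref{lem:x-continuity-U} gives $w \in C^{1,\alpha}(\R^d)$, hence continuous. The subharmonic order $\mu \le_{SH} \nu$ coming from the existence of the optimal stopping time $\tau$ (Theorem~\ref{thm:saturation}) and Corollary~\ref{lem:U-stopping-strict} applied to $(\tau_1,\tau_2)=(0,\tau)$ give $w \ge 0$ everywhere.

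\emph{Identification of $E$ and $F$.} Writing $w(x,t) := U_\nu - U_{\mu_t}$ in the notation of Section~\ref{sec:potential}, so $w(x) = w(x,0)$, the time-monotonicity and convergence in Corollary~\ref{lem:mono-potential} yield $w(x,0) > 0$ if and only if $U_{\mu_t}(x) < U_\nu(x)$ for some positive $t$, i.e., if and only if $s^{U,f}(x) > 0$ in the sense of Definition~\ref{def:potential-barrier}. By Theorem~\ref{thm:tau-equal-tau-U} we have $s = s^{U,f}$ $\nu$-a.e., and by Theorem~\ref{thm:saturation}(a) the sets $E = \{s>0\}$ and $F = \{s=0\}$ are defined $\nu$-a.e. and satisfy $|E \cap F| = 0$. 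Therefore $E = \{w>0\}$ and $F = \{w=0\}$ up to null sets for the relevant measures.

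\emph{Verification of the PDE.} Since $\Delta N = \delta_0$, one has $\Delta U_\mu = \mu$ and $\Delta U_\nu = \nu$ distributionally, so $-\Delta w = \mu - \nu$ in $\R^d$. Plugging in the saturation decomposition $\nu = f\chi_E + \mu\chi_F$ from Theorem~\ref{thm:saturation}(a), and using that $|E\cap F|=0$ while $\mu - \nu$ vanishes outside $E \cup F$, I compute
\[
-\Delta w \;=\; \mu - f\chi_E - \mu\chi_F \;=\; (\mu-f)\chi_E \;=\; -(f-\mu)\chi_{\{w>0\}},
\]
which is exactly \eqref{obstacle}. This completes the identification of $w$ as the asserted continuous solution.

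\emph{Main difficulty.} The delicate point is not the algebraic manipulation above but ensuring that the identifications $\{w>0\} = E$ and $\{w=0\} = F$ hold in a strong enough sense to make the distributional computation of $-\Delta w$ valid globally on $\R^d$, rather than only pointwise inside each region. The $C^{1,\alpha}$ regularity of $w$ given by Lemma~\ref{lem:x-continuity-U} is essential here: it prevents any singular distributional contribution to $\Delta w$ concentrated on the free boundary $\partial\{w>0\}$, and since $\mu \in L^\infty$ places no mass on lower-dimensional sets, no further boundary correction is needed. A secondary care concerns points where $\mu$ charges the instantaneous-stop region $F$, but there $\nu = \mu$ locally so both sides of the equation vanish simultaneously.
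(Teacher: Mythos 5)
Your proof follows the paper's skeleton exactly: set $w = U_\nu - U_\mu$, identify $E = \{w>0\}$ and $F = \{w=0\}$, then compute $-\Delta w = \mu - \nu$ and simplify using the saturation decomposition. The route to the identification differs, though. The paper's (very terse) proof invokes Lemma~\ref{lem:Ito-closed-set} and continuity: the identity $\int_K w\,dy = \mathbb{E}\bigl[\int_0^\tau \tfrac12\chi_K(W_t)\,dt\bigr]$ for closed $K$ characterizes $\{w = 0\}$ as exactly the set the running Brownian motion occupies for zero expected time before stopping, i.e.\ the instant-stop set $F$. You instead route through the potential-flow barrier machinery of Section~\ref{sec:potential}: first $\{w > 0\} = \{s^{U,f} > 0\}$ by time-continuity and monotonicity of $t\mapsto U_{\mu_t}(x)$, then $s^{U,f} = s$ $\nu$-a.e.\ via Theorem~\ref{thm:tau-equal-tau-U}(2). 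That is heavier machinery and imports a hypothesis you have not verified: Theorem~\ref{thm:tau-equal-tau-U}(2) requires $\nu$ to be bounded below by a positive constant on its support, which Theorem~\ref{thm:saturation}(a) alone does not give (the $\mu|_F$ piece of $\nu$ need not be uniformly positive, and no lower bound on $f$ is assumed in the corollary). The Ito-identity route does not need this. Finally, your algebraic step is slightly obscured by the claim that ``$\mu - \nu$ vanishes outside $E\cup F$,'' which itself would need justification; the cleaner observation (and the one the paper makes implicitly) is that $\{w>0\}$ and $\{w=0\}$ partition $\R^d$ since $w$ is continuous and nonnegative, so $\chi_{\{w=0\}} = 1 - \chi_{\{w>0\}}$, and substituting $\nu = f\chi_{\{w>0\}} + \mu\chi_{\{w=0\}}$ into $\Delta w = \nu - \mu$ collapses directly to $(f-\mu)\chi_{\{w>0\}}$ with no leftover term to argue away.
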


\begin{proof}
 {\red First, for type (I), set} $w := U_\nu - U_\mu$ . Then {\red from Theorem~\ref{thm:saturation}(a)},  $E=\{ w >0\}$ and $F=\{ w=0\}$.  To see this, for example apply Lemma~\ref{lem:Ito-closed-set} and continuity of $U_\nu$ and $U_\mu$.
  Therefore, we see that 
\begin{align*}
\Delta w = \nu - \mu = f\chi_{\{ w >0\}} + \mu \chi_{\{ w =0\}} - \mu =  (f-\mu)\chi_{\{ w >0\}}.
\end{align*}

{\red In the type (II) case, set $w := U_{\tilde \nu} - U_{\tilde \mu}$. with  $\tilde \mu = \mu -  f\wedge \mu$ and  $\tilde \nu = \nu -  f\wedge \mu$.  Then {\red from Theorem~\ref{thm:saturation}(b)},  $E=\{ w >0\}$ and $supp \, \tilde \mu \subset \{  w>0\}$; this is due to the fact that $\tilde \tau >0.$   Therefore, we see that 
\begin{align*}
\Delta w = \tilde \nu - \tilde \mu = \tilde f\chi_{\{ w >0\}} - \tilde \mu \chi_{\{ w>0\}}=  (f-f \wedge \mu -\mu + f \wedge\mu)\chi_{\{ w >0\}}=(f-\mu)\chi_{\{ w >0\}}.
\end{align*}
This completes the proof. }
\end{proof}

As in this corollary, {\red the set $E$ in  Theorem~\ref{thm:saturation} where the target density $\nu$ saturates the constraint $f$,}  is determined by the solution $w$ of \eqref{obstacle}, 
and, it is contained in the convex hull of the support $\nu$. Therefore, it is compact, because the optimal target measure $\nu$ is compactly supported, in particular in a large ball $B_R$ centred at the origin. 
Now, 
$w$ is characterized as the unique solution of the well-known elliptic obstacle problem. Namely $w$ minimizes
\begin{equation}\label{variational}
\int_{B_R} (|D\varphi|^2 + 2(f-\mu) \varphi) dx
\end{equation}
 among nonnegative functions $\varphi \in H^1_0(B_R),$  
 see \cite{blank}, \cite{caffarelli98}.
  The regularity property of the free boundary $\partial E=\partial\{w>0\}$ has been actively studied in the literature, which sensitively depends on the regularity of $f$; for instance, when $f$ is at least H\"{o}lder continuous ($C^{\alpha}$), $\partial E$ is $C^{1,\alpha}$ except at singular points with explicit and unique asymptotic blow-up profiles. In dimensions $d=2,3$, it is expected that $\partial E$ has no singular points when $f$ is sufficiently smooth for a generic choice of $f$, as suggested by the particular discussion in \cite{figalli20generic}. Thus {\red in type (I) case} the set $E$ exhibits an initial expansion of the support of $\mu$ to a larger and (mostly) smooth set; it suggests an instant regularization of the initial data which helps in our case, establishing a well-posed evolution for $(St_1)$, a problem that is well known to be  ill-posed in general.  For computational methods to obtain $E$, through the obstacle problem, see for instance \cite{tran15} and the references therein.

\noindent \begin{proof}[\bf Proof of Theorem~\ref{thm:saturation}]
We proceed  in several steps.

\medskip

1. {\it When $\tau >0$ a.s.}
\,\, For $0<\delta<1$ define $$H_\delta:=\{ x \ | \ \delta \le \frac{d\nu(x)}{dx} < f(x) -\delta\}.$$ We claim that $\nu [H_\delta] =0$ {\red for each sufficiently small $\delta$.} Suppose $\nu[H_\delta]>0$.  
Due to Corollary \ref{cor:mass_below_s} applied to $G= H_{\delta}$,
either
\begin{itemize}
 \item[(i)] $|\mu \wedge (\nu |_{H_\delta}) | >0$, or
 \item[(ii)]
there exists a constant $\bar t>0$ such that 
\begin{align*}
 Prob[W_{\bar t} \in H_\delta \ \& \ 
 \bar t < \tau] >0. 
\end{align*}
\end{itemize}
We will show that neither case is possible.

\medskip

If (i) holds, then first observe that, from the fact that $H_{\delta}$ increases as $\delta$ decreases, we have the set $S:= \{x:\frac{d\mu(x)}{dx} \geq \delta\} \cap \nu |_{H_{\delta}}$ has positive measure for sufficiently small $\delta>0$. We then define a cost-decreasing randomized stopping time $\tilde \tau$  (starting from the distribution $\mu$) as follows. First, $\tilde \tau$ is randomized at the initial time for  those Brownian particles from $S$, in such a way that the distribution of initially stopped particles from  $S$ has positive mass with density $\delta$. For the remaining particles it follows $\tau$. Then the resulting terminal  distribution by $\tilde \tau$, say $\tilde \nu$, has density less than  $f$ {\red on $S$,} by the definition of the set $H_\delta$.  {\red Outside $S$, $\tilde \nu \le \nu$, therefore  overall $\tilde \nu \le f$.}  Moreover, from the construction of $\tilde \tau$ and the strict monotonicity of $\mathcal{C}$ we have $\mathcal{C}(\tilde \tau) < \mathcal{C}(\tau)$. This contradicts optimality of $\tau$, and thus (i)  does not happen.

\medskip

 For (ii), we define $\tilde \tau$ in such a way that  for those Brownian trajectories with $\tau <\bar t$,  stop at $\tau$.  For those Brownian trajectories with $\tau > \bar t$, if $W_{\bar t} \not \in H_\delta$ just proceed until $\tau$, but, if $W_{\bar t} \in H_\delta$  drop a portion of mass at time $\bar t$, such that the resulting mass is  positive but has density $\le \delta$ over the set $H_\delta$, then proceed until $\tau$; note that this is possible because the density at $x$ of the distribution of $W_{\bar t}$ is bounded from above by $K_{\bar t}(x)$ of  the solution to the heat equation with initial data $\mu$, thus at each $x \in H_\delta$  when $W_{\bar t}=x \in H_\delta$, one can stop the mass with the probability $$\frac{\min[\delta, K_{\bar t}(x)]}{K_{\bar t}(x)}>0,$$ and  we get the positive  stopped density  with the total mass $\le \delta$. Therefore this randomized stopping time $\tilde \tau$ has distribution $\tilde \nu$ with $\tilde\nu \le f$. Moreover, notice that 
 $\tilde \tau \le \tau$ for each random path almost surely, and $Prob[\tilde \tau < \tau]>0$. Then, the cost $\mathcal{C}(\tilde \tau) < \mathcal{C}(\tau)$ from the strictness of the cost functional $\mathcal{C}$. This contradicts optimality of $\tau$, thus completing the proof for the case  `$\tau >0$ almost surely':  we verified that $\nu = f\chi_E$ for some set $E$ for both (I) and (II).

\medskip

    {\red We now consider the general case.}  
    
    \medskip

2. (I) case.  Recall that in this case $\tau =\inf \{ t >0 \ | \ t\ge s(W_t)\}$ {\red (see Theorem~\ref{thm:optimal-Skorokhod-existence} and \eqref{eqn:prelim-optimal-barrier})}.   Notice that the set $F:=\{ x \ | \ s(x) =0\}$ gives a barrier set for $W_t$ , $t\le \tau$. That is, any Brownian path starting from a point in $F$ with the stopping time $\tau$ stops immediately, equivalently, $\tau \le \tau_F$. As a consequence the resulting final distribution with $\tau$ starting from $\mu|_F$ is $\mu|_F$. On the other hand, 
denote $\bar \mu := \mu|_{F^c}$ and apply the stopping time $\tau$ for those Brownian particles starting from $\bar \mu$. We let $\bar \tau$ denote such a restriction of $\tau$. Notice that $\bar \tau>0$ almost surely {\red because the set $F^c =\{ s >0\} =R^c =(R^U)^c$ is an open set  (see Theorem~\ref{thm:tau-equal-tau-U}). Moreover $\bar \tau$} is the optimal stopping time for $\bar \mu$ and  $\bar f=f-\mu_F$;  {\red otherwise, $\tau$ will not be optimal}.  Let $\bar \nu$ be the resulting final distribution from the stopping time $\bar \tau$. Then from {\red Step 1,} we have that
\begin{align*}
 \bar \nu = \bar f \chi_E \quad \hbox{ for some set $E$ where $\bar f |_E >0$}.
\end{align*}
Moreover, notice that $\bar \mu \wedge (\bar \nu|_F ) =0$ from the definition  $\bar \mu = \mu|_{F^c}$. Therefore, from $\tau \le \tau_F$ (since $F$ is a part of the barrier)  and Lemma 
~\ref{lem:zero_measure_G} we get $\bar \nu [F] =0$.
This also implies that $ |E \cap F| =0$, therefore, $\bar f \chi_E = f \chi_E$.  In summary, we have 
\begin{align*}
 \nu= \bar \nu + \mu|_F= f \chi_E + \mu|_F \quad \hbox{as desired.}
\end{align*}
This completes the proof for (I) {\red  in part (a).}

\medskip

3. (II) case. 
In this case the stopping time $\tau$, which is the optimal stopping time between $\mu$ and $\nu$,  is randomized at $t=0$ to give $f \wedge \mu$ and proceeds with positive time starting from the rest {\red $\tilde \mu=\mu -\mu\wedge \nu= \mu - f\wedge\mu$,} which gives the additional  final distribution {\red $\tilde \nu = \nu -\mu\wedge \nu=  \nu -f\wedge \mu$}; {\red for explanation (in particular why $\mu\wedge \nu = f\wedge \mu$) see the discussion around \eqref{eqn:wedge-f-the-same}.}
 Let $\tilde \tau$ denote the stoping time $\tau$ restricted to  the initial distribution $\tilde \mu$. Then its  final distribution $\tilde \nu$ gives an optimal solution to \eqref{eqn:problem-upper} with the upper bound constraint $\tilde f = f-f\wedge \mu$. Notice that $\tilde \tau  >0$ almost surely {\red as $\tilde \mu \wedge \tilde \nu =0$.} Thus from Case 1, we have $\tilde \nu = \tilde f \chi_E$ for a set $E$. 
Notice that $\nu = \tilde \nu + f \wedge \mu$. All things combined we verified the result {\red for type (II) in part (b).}
This completes the proof. 
\end{proof}

\subsection{Upper bound for the optimal stopping time in the (I) case}
 One of the applications of the saturation result is the following:
\begin{theorem}\label{thm:time-upper}
{\red Let $\mu$,$f$ and $\nu=\nu^*$  be as given in Theorem~\ref{thm:existence}, and $\tau$ be the corresponding optimal stopping time. }
In addition assume that  $f \ge \delta$ for a constant $\delta >0$. Then for costs of type (I), 
 there is a constant $\bar T=\bar T(\delta,\|\mu\|_{L^1})$ such that
\begin{align*}
  \tau \le \bar T \hbox{ almost surely.}
\end{align*}
\begin{remark}
 It is interesting that the  extinction time $\bar T$ does not  depend on the size of the support of $\mu$.
\end{remark}

\end{theorem}
\begin{proof} 
Recall that $\tau$ has a barrier function $s$ such that 
$\tau = \inf \{ t >0 | \ t \ge s(W_t)\}$. 
Consider the optimal target $\nu$ and the set $E$ and $F$ from Theorem~\ref{thm:saturation} (a) (so in the (I) case) which satisfies
 that {\red for each random path,} 
  \begin{align*}
& \hbox{`$\tau >0$' implies `$W_\tau \in E$' almost surely, and }\\ 
& \hbox{`$\tau=0$' implies `$W_0 \in F$' almost surely, }
\end{align*}
and
 that  for any measurable set $S$,
\begin{align*}
 Prob[W_\tau \in S \ \& \ \tau >0 \ | \ W_0 \sim \mu]= \nu[S \cap E] = \int_{S \cap E} df \ \ge \delta |S\cap E|
\end{align*}
where the last inequality is from 
 the assumption that $f \ge \delta$. Since $\nu \ll Leb$, we may assume, without loss of generality, that $E^c$ consists of its Lebesgue points, by adding the non-Lebesgue density portion of $E^c$, which has zero Lebesgue measure, to $E$.
 Then, from Corollary \ref{cor:no-motion-no-mass-barrier},  we see that
\begin{align}\label{eqn:stay-in-E} 
\hbox{ if $\tau >0$, then almost surely $\tau \le \tau_{E^c}$.}
\end{align}
Now consider for each $T>0$,  the set 
\begin{align*}
 Z_T :=\{ x \ | \ s(x) > T\}.
\end{align*}
Notice that $\nu[Z_T] = \nu [Z_T \cap E].$

It suffices to show that there exists $\bar T$ such that $\nu[Z_{\bar T}]=0$ {\red because from Proposition~\ref{lem:hitting_s} $\tau = s(\tau)$ almost surely.}

{\red Now Proposition~\ref{lem:hitting_s} implies,} almost surely, 
$ W_\tau \in Z_T $  if and only if  $\tau > T .$ 
Also note that  $\tau > T$ implies $s(W_T) > T$ so  $W_T \in Z_T$. 
   Also, \eqref{eqn:stay-in-E} implies $W_T \in E$ when $\tau >T$.  Therefore, almost surely, 
\begin{align*}
 W_\tau \in Z_T   \hbox{ implies }
W_T \in Z_T \cap E \ \& \ \tau > T .
\end{align*}
In particular, we have
\begin{align*}
 Prob[ W_\tau \in Z_T    \ | \ W_0\sim \mu] \le Prob[W_T \in Z_T \cap E \ \& \ \tau > T \ | \ W_0\sim \mu].
\end{align*}

On the other hand, let $\rho_T$ be the distribution of the heat flow in $\R^d$  at time $T$ with the initial condition $\rho_0=\mu$. 
 Notice that $\rho_T \le C_T$ for some constant $C_T$, depending only on $\| \mu\|_{L^1}$ and $T$, decaying exponentially to zero as $T\to \infty$. 
 Consider
\begin{align*}
C_T |Z_T \cap E|  &\ge \int_{Z_T \cap E} d\rho_T =Prob[ W_T \in Z_T \cap E \ | \ W_0\sim \mu ] \\
& \ge Prob[ W_T \in Z_T \cap E \  \& \ \tau  >  T  \ | \ W_0\sim \mu \  ]\\
& \ge Prob[ W_\tau \in Z_T  \ | \ W_0\sim \mu \  ]\\
& = \nu[Z_T] = \nu[Z_T \cap E]\\
& \ge \delta |Z_T \cap E|.
\end{align*}
By examining the limit $T\to \infty$ we see that there exists $\bar T= \bar T (\delta, \|\mu\|_{L^1})>0$ such that  $|Z_{\bar T} \cap E|=0$. Thus we conclude that  $\nu [Z_{\bar T}] =0$, completing the proof. 
\end{proof}

Note that a similar result does not hold for costs of type (II), as easily seen in the following example.
\begin{example}
Let $B_r:= \{x\in\R^d: |x|<r\}$, and solve \eqref{eqn:problem-upper} for a cost of type (II) with  
\begin{align*}
 \mu = 2^d \chi_{B_1} \hbox{ and }  f =1.
 \end{align*}
 Then the optimal measure is given by $ \nu^* =  \chi_{B_2}$
 {\red as seen from the symmetry of the problem and the saturation result Theorem~\ref{thm:saturation}(b).}  
 The corresponding barrier function $s$ is radially symmetric and can be given by a function of the form
\begin{align*}
 s (x) = \begin{cases}
     0  & \text{ for $|x|\le 1 $}, \\
     g(|x|) & \text{ for $1\le  |x| <2$,}\\
     +\infty   & \text{for $|x| \ge 2 $}.
\end{cases}
\end{align*}
where $g: [1, 2) \to \R$ is an increasing function with $g(1)=0$ and $g(2^-) = \infty$. Here the optimal stopping time $\tau$, which is the hitting time to such a barrier, has no upper bound.
\end{example}

\section{Global-time existence of supercooled Stefan problem}\label{sec:global-Stefan}
Translating the saturation result (Theorem~\ref{thm:saturation}) into a PDE formulation (Theorem~\ref{thm:consistency}), we can now derive global-time existence results for both $(St_1)$ (Theorems~\ref{Stefan:I} and \ref{Stefan:I:2}) and $(St_2)$ (Theorem~\ref{Stefan:II}). Our emphasis will be on $(St_1)$ which has not been well understood in the literature beyond one space dimension. We will see that different choices of $f$ (i.e. different  characteristic functions) yield different global-time solutions of $(St_1)$. For instance with $f=1$ (on $\R^d$) we find the unique solution that vanishes in finite time (Theorem~\ref{Stefan:I}), for a certain class of initial data $(\mu, E)$.  Another choice of $f$ yields a solution that does not vanish in finite time (Theorem~\ref{Stefan:I:2}).  Let us point out that the initial domain $E$ needs to be chosen carefully to have such solutions. For instance $E$ can be obtained from a variational problem depending on $\mu$ and $f$ as given in %\eqref{variational}.
{\red \eqref{variational} through  \eqref{obstacle}}.

\medskip

 We first show that the optimal target $\nu$ is positive in the support of {\red the Eulerian variable} $\eta$ as long as  $f$ is. Together with the saturation theorem it follows that $\nu=f$ in the active region if $\tau>0$.

 \begin{lemma}\label{active_region_nu}
Let $\mu, f, \nu=\nu^*$ be as given in Theorem~\ref{thm:existence}, and let $\eta$ be the Eulerian variable associated with $(\mu, \nu)$.
 Then we have
$$
\nu>0 \hbox{ on the set }  \bigcup_{t>0}\{\eta(\cdot,t)>0 \} \cap \{f>0\}.
$$

\end{lemma}

\begin{proof}

We can apply Lemma~\ref{lem:nu_below_W_t} to the set $G:= \{\eta(\cdot,t_1)>0\}$ for each $t_1>0$.
\end{proof}

\subsection{Stable Stefan problem}
 Let us first briefly discuss global well-posedness for $(St_2)$. Let us consider \eqref{eqn:problem-upper}  for cost type $(II)$ with $f=1$ and $\mu>1$ in its support.   
\begin{theorem}[Global-time {\red well-posedness} for $(St_2)$]\label{Stefan:II}
Consider $\mu := (1+\eta_0)\chi_{\Sigma}$ with a positive, bounded function $\eta_0$ and a compact set $\Sigma$. Let $f=1$, \and let $\nu$ solve \eqref{eqn:problem-upper} with $\mu$ and $f$ for cost type (II). Then the corresponding $\eta$-variable for $(\mu, \nu)$  is the unique weak solution of the stable Stefan problem
$$
(\eta +\chi_{\{\eta>0\}})_t - \frac{1}{2}\Delta \eta = 0, \leqno (St_2)
$$
with {\red the} initial data $(\eta_0\chi_{\Sigma}, \Sigma)$.

\end{theorem}

\begin{proof}
Theorem~\ref{thm:saturation} (b) yields the decomposition  
{\red $\mu = \tilde{\mu} + \chi_{\Sigma}$ and  $\nu = \tilde{\nu} + \chi_{\Sigma}$. }
Moreover the "actively accumulated target" $\tilde{\nu}$ solves \eqref{eqn:problem-upper} with $\tilde{f} = \chi_{\Sigma^C}$ and $\tilde{\mu} = \eta_0\chi_{\Sigma}$.
Observe that $\eta$-variable for $(\mu, \nu)$ is the same as that of $(\tilde{\mu}, \tilde{\nu})$, because $\chi_{\Sigma}$ presents precisely the distribution at $\tau=0$.  In particular the optimal stopping time $\tilde{\tau}$ between $\tilde{\nu}$ and $\tilde{\mu}$ is positive, allowing us to apply Theorem~\ref{thm:consistency} for $\tilde{\mu}$ and $\tilde{\nu}$. 

\medskip

Before doing so, let us evaluate $\tilde{\nu}$ in the set $\{0<s(x)<\infty\}$ {\red where $s$ is the barrier function for  $\tilde{\tau}$.} From Lemma~\ref{active_region_nu} and {\red the saturation (Theorem~\ref{thm:saturation})} we conclude that
$$
\tilde{\nu}=1 \hbox{ in } \cup_{t>0}\{\eta(\cdot,t)>0\} \setminus \Sigma.
$$
 Recall from 
 {\red Theorem~\ref{thm:tau-equal-tau-U}}
 we have $\{\eta>0\} = \{s(x)<t\}$. Thus we have $$E = \lim_{t\to 0^+} \{\eta(\cdot,t)>0\} = \{s(x)=0\},$$ which equals $\Sigma$. 
It now follows that $\tilde{\nu}=1$ in the region $\{0<s(x)<\infty\}$. %where $s$ is the barrier function for $\tilde{\tau}$.
Theorem~\ref{thm:consistency} (b) and Remark~\ref{rmk:St-St-nu} finally yields that $\eta$ solves $(St_2)$ {\red uniquely with the} initial data $(\eta_0, \Sigma)$.
\end{proof} 

Note that for the above theorem  the initial set for the active region coincides with the support $\Sigma$ of the initial distribution $\mu$. This is due to the nature for the type (II) costs whose barrier sets in the space-time for the optimal stopping are time-backward monotone, in contrast to the type (I) costs that gives time-forward {\red monotone} barrier sets. 

For the supercooled $(St_1)$ case, not every initial set $\Sigma$ has a global-time weak solution, as we expect from the instability of the problem. This will be apparent in the theorems below for the supercooled case.

\subsection{Supercooled Stefan Problem}

 Next let us discuss existence results for $(St_1)$.  First we apply a parallel reasoning to Theorem~\ref{Stefan:II} but now applied to the type (I) costs, to derive a finite-time vanishing solution.

\begin{theorem}[A global-time existence for $(St_1)$]\label{Stefan:I}
 Let $\mu$ and $f$ be as given in Theorem~\ref{Stefan:II}, and let $\nu$ solve \eqref{eqn:problem-upper} with $\mu$ and $f$ for cost type (I). Then $\nu=\chi_E$, and the Eulerian variable $\eta$ for $(\mu, \nu)$ solves 
the supercooled Stefan problem 
$$
(\eta - \chi_{\{\eta>0\}})_t - \frac{1}{2}\Delta \eta = 0 \leqno(St_1) 
$$
with initial data $(\mu, E)$.  Moreover $E$ can be characterized as the positive set of $w$, where $w$ solves the obstacle problem
\begin{equation}\label{obstacle2}
\chi_{\{w>0\}} -\Delta w = \mu, \quad w\geq 0. 
\end{equation}
In particular $E$ contains $\Sigma$. Lastly, $\eta$ vanishes to zero in finite time. 
\end{theorem}

\begin{proof}
From Theorem~\ref{thm:saturation} (a) and the choice of $\mu$ it follows that $\tau >0$ and  $\nu=\chi_E$.  Due to Theorem~\ref{thm:consistency} (a) it follows that $\eta$ solves $(St_1)$ with initial data $(\mu, \tilde{E})$, where {\red the initial active region $\tilde{E}$ contains $E$. Notice that  $\tilde E = \bigcup_{t>0}\{\eta(\cdot,t)>0 \} $ because the set  $\{\eta(\cdot,t)>0 \}$ decreases monotonically in $t$.} From Lemma~\ref{active_region_nu} and the fact that $f=1$, it follows that $\nu>0$ {\red in $\tilde E$.} % the active region of $\eta$. 
{\red In particular, $\tilde E \subseteq E$, therefore $\tilde E =E$.   This also} means that $E$ contains the support of $\mu$, since there is no instantly frozen part of $\mu$ {\red its support is contained in the initial active region $\tilde E$ which is nothing but $E$ as we saw.}\\ %It also means that $\tilde{E}$ contains $E$, and thus $\tilde{E}$ must equal $E$. \\
Lastly, note that the characterization of $E$ via \eqref{obstacle2} is due to Corollary~\ref{cor:stoppingtime_zero} and the fact that $E$ contains the support of $\mu$.    Lastly, again since $f=1$, Theorem~\ref{thm:time-upper} yields that $\eta$ turns to zero in finite time.
\end{proof}

In the next subsection {\red (see Corollary~\ref{vanishing} )} we will show that $\eta$ in Theorem~\ref{Stefan:I} is indeed the unique solution  for the initial data $\mu$
that vanishes in finite time.

\medskip 

{\red On the other hand, we note that one can} generate solutions of $(St_1)$ from a wider class of initial data by making use of Theorem~\ref{thm:saturation} and Theorem~\ref{thm:consistency}. For instance, one can choose $f = \chi_K$ where $K$ and the support of $\mu$ are disjoint to avoid instant stopping of particles. In this case we obtain the following:

\begin{theorem}[A global-time existence for $(St_1)$, second version]\label{Stefan:I:2} 
Let $\mu\geq 0$ be a compactly supported function in  $L^\infty(\R^d)$, and let $f:=\chi_{K^c}$, where $K$ is a compact set that includes the support of $\mu$. Let $\nu$ solve \eqref{eqn:problem-upper} with $\mu$ and $f$ for cost of type (I). Then the Eulerian variable $\eta$ for $(\mu, \nu)$ solves $(St_1)$ with the  initial data $(\mu, E)$, and $\nu = \chi_{E\cap K^c}$.  Here $E$ is the positive set of $w$, where $w$ solves the obstacle problem
$$
\Delta w = (f-\mu)\chi_{\{w>0\}} = \chi_{\{w>0\}\cap K^c} - \mu, \quad  w\geq 0.
$$

\end{theorem}

\begin{proof}
In this setting, to travel from $\mu$ to $\nu$ the optimal stopping time  $\tau$ must be positive. Theorem~\ref{thm:saturation} now yields that $\nu = \chi_{\Sigma}$, where $\Sigma \subset \{0<s(x)<\infty\} \subset K^C$. {\red (In fact,  $\Sigma =\{0<s(x)<\infty\}$ as one can see from the similar reasoning in the second half of the proof of  Theorem~\ref{Stefan:II}, by using Lemma~\ref{active_region_nu}.)} Hence the consistency theorem {\red (Theorem~\ref{thm:consistency}(a))} yields that $\eta$ solves $(St_1)$ with the initial data $(\mu, E)$, where {\red the initial active region} $E$ contains $\Sigma$. On the other hand $\Sigma\cup K$ contains $E$ due to Lemma~\ref{active_region_nu}. Now we can conclude by characterizing $E$ in terms of $w$ by Corollary~\ref{cor:stoppingtime_zero}. 
\end{proof}

Note that {\red  in the case of Theorem~\ref{Stefan:I:2}  the Eulerian variable} $\eta$ does not vanish in finite time. It is because the Brownian motion will travel forever in  $E\cap K$, where no stopping happens there due to the fact $f |_K =0$.
\begin{remark}
Uniqueness is in general not true for $(St_1)$ with initial data $\mu$, even with the same initial domain $E$ (see the example in Section~\ref{sec:subharmonic-generating}). On the other hand, the optimal target problem with $f$ yields a unique $\eta$-variable corresponding to the optimal stopping time. Thus our approach yields a variationally oriented criteria for a unique solution of $(St_1)$ with {\red a given} initial distribution $\mu$.
\end{remark}

\begin{remark}[Relation between $(St_1)$ and $(St_2)$]\label{rmk:universality-PDE}
   Due to universality (Theorem~\ref{thm:universality}) we have the same optimal target measure $\nu$ for type (I) and (II), for given $\mu$ and $f$. Therefore the solution of  stable Stefan problem $(St_2)$ in Theorem~\ref{Stefan:II} determines the initial set $E$ that yields global-time solution for $(St_1)$. 
\end{remark}

$\circ$ {\bf A discussion on the initial expansion}

\medskip

An interesting feature of our solutions of $(St_1)$ is {\red the initial trace $E$ of} the active region, instantly expanded from the support of the initial data $\mu$.  It is well-understood that the local-in-time solutions to $(St_1)$ develops jump discontinuity when the solution is ``overloaded", namely when the average density goes beyond $1$ (see for instance \cite{CK12}).  When $\mu>1$ in its support, as given in Theorem~\ref{Stefan:I}, $\mu$ is entirely overloaded and thus it is natural that one must expand its support to activate the supercooling process.  On the other hand when $\mu$ has small density as allowed in Theorem~\ref{Stefan:I:2},  such expansion follows from our choice of $f$,  which describes the particular scenario where the freezing stimulus is only available outside of the initial supercooled region. The general criteria for $\mu$ and $E$ to generate a global solution of $(St_1)$, in terms of the size of $\mu$ and the geometry of $E$, remains open. 

\medskip

Our characterization of the initial domain  $E$ via \eqref{obstacle} is also reminiscent of the classical paper by Dibenedetto and Friedman \cite{DbF84}, which considers a time integrated version of $(St_1)$. See the discussion in {\red \cite[Section 4.3]{DbF84}.}

\medskip

\subsection{Existence of finite-time vanishing solution characterizes the initial set}

In this section, we show that the choice of the initial set $E$ in Theorem~\ref{Stefan:I} is {\em necessary} for a finite-time vanishing solution to exist. We begin by connecting finite-time vanishing solutions to the notion of subharmonically generated sets introduced in  Section~\ref{sec:subharmonic-generating}.

\begin{theorem}\label{thm:sh-generate}
For a given compactly supported $0\le \mu \in L^{\infty}(\R^d)$, let $\eta$ be a weak solution of $(St_1)$ with the initial data $(\mu, E)$. Then $\eta$ vanishes in finite time if and only if $(E, E)$ is subharmonically generated by $\mu$ and $\eta$ is the corresponding Eulerian variable to $(\mu, \chi_E)$.  
\end{theorem}
\begin{proof}
The `if' direction is a direct consequence of Theorem~\ref{thm:sh-gen-equiv-St1} and Theorem~\ref{thm:time-upper}. The `only if' direction follows from Theorem~\ref{thm:sh-gen-equiv-St1} and  the fact that 
$s(x):=\sup \{t: \eta(x,t)>0\} \leq T$ for some finite $T$.
\end{proof}

\begin{theorem}[Necessary condition for finite time vanishing solution]\label{thm:unique-E}
{\red For a given compactly supported $0\le \mu \in L^{\infty}(\R^d)$, 
suppose} there is a solution to $(St_1)$ with the initial data $(\mu, E)$ that vanishes in finite time. Then, the set $E$ is determined by the optimal free target problem \eqref{eqn:problem-upper} of type (I) cost,  for $\mu$ and $f\equiv 1$, where  the optimal target is $\nu^*=\chi_E$. 

\end{theorem}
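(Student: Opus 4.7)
The plan is to identify both $E$ (from the finite-time vanishing hypothesis) and the positivity set $E^*$ of the optimal target $\nu^*$ as the positivity set of a nonnegative solution to the same elliptic obstacle problem; uniqueness of that obstacle problem then gives $E=E^*$, and combined with Theorem~\ref{thm:saturation}(a) yields $\nu^*=\chi_E$. The bridging object is the time integral
\[
w_E(x):=\tfrac12\int_0^\infty \eta(x,s)\,ds,
\]
which is finite and compactly supported since $\eta$ vanishes in finite time.

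First, I invoke Theorem~\ref{thm:sh-generate}: the finite-time vanishing hypothesis forces $(E,E)$ to be subharmonically generated by $\mu$, and $\eta$ is the Eulerian variable associated with $(\mu,\chi_E)$, arising from a stopping time $\tau_E>0$ almost surely with $W_{\tau_E}\sim\chi_E$. Because $\tau_E>0$ a.s., no mass of $\mu$ is stopped instantly, so for small $t>0$ the density $\eta(\cdot,t)$ is positive in a neighborhood of $\mathrm{supp}(\mu)$; combined with the monotonicity of $\{\eta(\cdot,t)>0\}$ in $t$ with initial trace $E$, this gives $\mathrm{supp}(\mu)\subseteq E$ and $\{w_E>0\}=E$.

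Next, integrating the Eulerian equation $\eta_t-\tfrac12\Delta\eta=-\rho$ over $t\in[0,\infty)$ against a spatial test function, using $\eta(\cdot,0)=\mu$, $\eta(\cdot,\infty)\equiv 0$, and $\int_0^\infty\rho(\cdot,s)\,ds=\chi_E$, I obtain $\Delta w_E=\chi_E-\mu$ as distributions on $\R^d$. Using $\mathrm{supp}(\mu)\subseteq E$ to rewrite $\mu=\mu\chi_E$, this reads
\[
-\Delta w_E+(1-\mu)\chi_{\{w_E>0\}}=0,\qquad w_E\ge 0.
\]
By Corollary~\ref{cor:stoppingtime_zero}, the optimal target for Problem \eqref{eqn:problem-upper} with $f\equiv 1$ has the form $\nu^*=\chi_{E^*}+\mu\chi_F$ where $E^*=\{w^*>0\}$, $F=\{w^*=0\}$, and $w^*$ is the unique continuous nonnegative solution of the same obstacle problem, uniqueness coming from the strictly convex variational characterization (as $\dot H^1$-projection onto $\{\varphi\ge 0\}$ of an explicit potential of $1-\mu$) noted just after that Corollary. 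Hence $w_E=w^*$, giving $E=\{w_E>0\}=\{w^*>0\}=E^*$. Finally, $F=(E^*)^c=E^c$ is disjoint from $\mathrm{supp}(\mu)$, so $\mu\chi_F=0$ and $\nu^*=\chi_{E^*}+\mu\chi_F=\chi_E$.

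The main subtlety I expect is verifying $\mathrm{supp}(\mu)\subseteq E$: without this inclusion, the global distributional identity $\Delta w_E=\chi_E-\mu$ would not coincide with $(1-\mu)\chi_{\{w_E>0\}}$ on the portion of $\mathrm{supp}(\mu)$ lying outside $E$, and the obstacle problems for $w_E$ and $w^*$ would fail to match. The key point here is the strictly positive (non-instant) character of $\tau_E$ guaranteed by the notion of subharmonic generation in Definition~\ref{def:subharmonic-set}, which rules out loss of mass at $t=0^+$ and thereby lets the Eulerian identity be recast as the exact elliptic obstacle problem of Corollary~\ref{cor:stoppingtime_zero}.
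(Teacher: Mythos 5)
Your proof is correct, but it takes a genuinely different route from the paper's. The paper proves $\nu^*\le\chi_E$ directly: it invokes Theorem~\ref{thm:monotonicity}(1) with $\mu_1=\mu_2=\mu$ (which needs optimality of only one target, namely $\nu^*$) to get $\tau^*\le\tau$, then uses containment of the corresponding Eulerian supports plus the bound $\nu^*\le f=1$ and the strict positivity $\tau>0$ (so the instantly-stopped set $F$ of $\tau^*$ also sits inside $E$) to deduce $\nu^*\le\chi_E$; equality then follows from equal total mass. You instead push everything into the elliptic obstacle problem for the time-integrated potential $w_E=\tfrac12\int_0^\infty\eta\,ds$, identify $w_E$ and the potential $w^*=U_{\nu^*}-U_\mu$ from Corollary~\ref{cor:stoppingtime_zero} as solutions of the same problem $\Delta w=(1-\mu)\chi_{\{w>0\}}$, and conclude by uniqueness. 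Your route essentially upgrades the paper's informal remark about the $\dot H^1$-projection characterization into the engine of the proof, whereas the paper keeps the argument probabilistic and reuses the same monotonicity machinery that also yields universality. What each approach buys: the paper's is shorter and avoids regularity/uniqueness issues for the elliptic problem entirely; yours gives a cleaner ``PDE fingerprint'' for $E$ and makes transparent where the hypotheses enter.

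One point you leave implicit and should flag: uniqueness for the distributional PDE $\Delta w=(1-\mu)\chi_{\{w>0\}}$, $w\ge 0$ compactly supported, is \emph{not} automatic; it requires the one-sided inequality $\Delta w\le 1-\mu$ (equivalently $\mu\le f$ on $\{w=0\}$), so that $w$ is actually the variational solution. For $w^*$ this follows from $\nu^*=\mu$ on $F=\{w^*=0\}$ together with $\nu^*\le f$; for $w_E$ it follows from your $\mathrm{supp}\,\mu\subseteq E$ (or, more robustly, from $\mu=\mu\chi_E$ a.e., using that $\Delta w_E=\chi_E-\mu$ vanishes on the interior of $\{w_E=0\}$ and that $|\partial\{w_E>0\}|=0$ since $\nu=\chi_E\ge 1$ on its support). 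With that supplied, the pairing argument
\[
\int |D(w_E-w^*)|^2 = -\int (w_E-w^*)\big(\Delta w_E-\Delta w^*\big)\le 0
\]
closes the uniqueness step. Also, ``$\mathrm{supp}\,\mu\subseteq E$'' is best read as ``$\mu(E^c)=0$'': the support of $\mu$ may touch $\partial E$, but since $\mu\ll Leb$ and $|\partial E|=0$ this makes no difference for the identity $\chi_E-\mu=(1-\mu)\chi_E$.
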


Before proving this theorem, we point out that  to have such an initial active region $E$ {\red which is also the support of $\nu^*$ at the same time,} we must have {\red the corresponding optimal time} $\tau >0$ almost surely, {\red so $E$ contains the support of $\mu$.} 
In particular Corollary~\ref{cor:stoppingtime_zero} yields that such a set $E$  is determined by the obstacle problem \eqref{obstacle2}. Hence we arrive at the following conclusion {\red that  uniquely characterizes the solution in Theorem~\ref{Stefan:I}}:

\begin{corollary}[Unique Characterization]\label{vanishing} 
Let $\mu$ be as given in Theorem~\ref{Stefan:I}.  
Then there is a unique solution $\eta$ of $(St_1)$ with the initial data $\mu$ that vanishes in finite time. 
In this case, the initial domain $E$ is given by \eqref{obstacle2}, and $\eta$ corresponds to the Eulerian variable given by $(\mu, \chi_E)$.
\end{corollary}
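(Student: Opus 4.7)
The plan is to combine Theorem~\ref{thm:unique-E} and Theorem~\ref{thm:sh-generate} with the existence theory of the optimal target problem to secure uniqueness and existence simultaneously, and then to read off the characterization of $E$ from Corollary~\ref{cor:stoppingtime_zero}.

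For uniqueness, suppose $\eta$ is any finite-time-vanishing weak solution of $(St_1)$ with initial data $(\mu, E)$. Theorem~\ref{thm:unique-E} forces $E$ to coincide with the support of $\nu^* = \chi_E$, the optimal target of \eqref{eqn:problem-upper} for $(\mu, f\equiv 1)$ with a type (I) cost. This $\nu^*$ is unique by Theorem~\ref{thm:existence} combined with Theorem~\ref{thm:monotonicity}(2), so $E$ is determined by $\mu$ alone. For that fixed $E$, Theorem~\ref{thm:sh-generate} yields at most one finite-time-vanishing $\eta$, completing uniqueness.

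For existence, I would solve \eqref{eqn:problem-upper} with $f\equiv 1$ via Theorem~\ref{thm:existence} (the admissibility set $\mathcal{A}$ is nonempty because the heat flow of $\mu$ eventually spreads below $1$), obtaining the optimal target $\nu^*$. By Saturation (Theorem~\ref{thm:saturation}(a)) together with Corollary~\ref{cor:stoppingtime_zero}, we have $\nu^* = \chi_E$ with $E = \{w>0\}$ determined by the obstacle problem \eqref{obstacle2}. Let $(\eta,\rho)$ be the associated Eulerian flow from Proposition~\ref{thm:stochastic_embedding}. Theorem~\ref{thm:consistency}(a) shows $\eta$ is a weak solution of $(St_1)_{\nu^*}$ with initial data $(\mu, E)$, and since $\nu^*\equiv 1$ on $\partial\{\eta>0\}$, Remark~\ref{rmk:St-St-nu} upgrades this to a genuine weak solution of $(St_1)$. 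Finally Theorem~\ref{thm:time-upper} provides the finite-time vanishing.

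The subtle step is handling the instantly-frozen set $F = \{s=0\}$: Theorem~\ref{thm:saturation}(a) contributes $\mu|_F$ (rather than $\chi_F$) to $\nu^*$, and since $\nu^*\le 1$ on $F$ we have $\mu|_F \le 1$; this instantly-frozen mass must be absorbed into the initial jump of $\{\eta>0\}$ via the remark after Theorem~\ref{thm:consistency} (using the revised initial data $\mu - \mu_0$ and revised target $\nu^* - \mu_0$ for the active portion). Once this bookkeeping is carried out, the identification of $E$ with the positivity set of \eqref{obstacle2} follows directly from Corollary~\ref{cor:stoppingtime_zero}, closing the proof.
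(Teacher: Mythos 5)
Your uniqueness argument is correct and matches the paper's intent exactly: Theorem~\ref{thm:unique-E} together with uniqueness of the optimal target (Theorems~\ref{thm:existence}, \ref{thm:monotonicity}(2)) pins down $E$, and Theorem~\ref{thm:sh-generate} gives at most one finite-time-vanishing solution with data $(\mu,E)$.

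The gap is in your closing paragraph on the instantly-frozen set $F$. If $\mu|_F \ne 0$ (and in particular $\mu < 1$ on a set of positive measure, so that $F$ is not $\mu$-null), then the optimal target is $\nu^* = \chi_E + \mu|_F$, which is not a characteristic function unless $\mu \equiv 1$ on $F$. The ``absorption'' you propose via the remark after Theorem~\ref{thm:consistency} does not close the loop: that remark only says that the Eulerian flow produces a weak solution of $(St_1)_{\nu^* - \mu_0}$ with the \emph{revised} initial data $\mu - \mu_0$, i.e., with initial density $\mu|_{F^c}$, not $\mu$. One can check this directly from \eqref{eqn:Skorokhod_evolution}: carrying out the integration by parts, the instantly-frozen term $\int_F \mu\, w(\cdot,0)\,dx$ appears on the wrong side and cannot be folded into the data $(\mu, E)$ with $E \cap F = \emptyset$. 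So your construction yields a solution of $(St_1)$ with initial distribution $\mu - \mu|_F$, not $\mu$, and the existence claim for the original $\mu$ is not established in the case $\mu|_F \ne 0$.

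The paper's own (terse) reasoning instead uses the observation ``to have such $E$ we must have $\tau > 0$ almost surely'': the existence of any finite-time-vanishing solution with initial data $(\mu, E)$ already \emph{forces} $\tau > 0$ a.s.\ via Theorem~\ref{thm:sh-gen-equiv-St1}/Definition~\ref{def:subharmonic-set}, hence forces $\mu|_F = 0$, which is precisely what makes $\nu^* = \chi_E$ and makes \eqref{obstacle2} (rather than the more general \eqref{obstacle}) the correct characterization of $E$. So the case $\mu|_F \ne 0$ is not ``absorbed''; it is excluded by the hypotheses. You should replace the bookkeeping paragraph by this observation, i.e., argue that either $\tau^*>0$ a.s.\ and your existence argument goes through as written, or $\tau^*=0$ with positive probability and there is no finite-time-vanishing solution with initial data $\mu$ at all (so the corollary's existence claim is vacuous/conditional in that regime).
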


\begin{proof}[Proof of Theorem~\ref{thm:unique-E}]
 For this result,  we apply Theorem~\ref{thm:sh-generate} and Theorem~\ref{thm:monotonicity}.
 
 \medskip

 To have a vanishing in finite time solution to $(St_1)$,
  from Theorem~\ref{thm:sh-generate} it is necessary that $(E, E)$ has to be subharmonically generated by $\mu$. By definition, there is a corresponding type (I) optimal stopping time, say, $\tau>0$, solving the optimal Skorokhod problem $\mathcal{P}(\mu, \chi_E)$.  Moreover, since $\tau>0$ and we have $E= \{s(x)>0\}$ where $s(x)$ is the barrier function. 
  {\red From Theorem~\ref{thm:tau-equal-tau-U} it}
   follows that $ \{\eta>0\}= \{t<s(x)\}$,  and thus $E =\limsup_{t \to 0^+} \{  \eta (\cdot, t) >0\}$  for the corresponding Eulerian flow $(\eta, \rho)$. 

\medskip

On the other hand, we have the optimal stopping time $\tau^*$ for the free target problem \eqref{eqn:problem-upper}  for type (I) cost, from the measure $\mu$, with $f \equiv 1$, 
 with the optimal target $\nu^*$  with its density $\nu^* \le 1$ everywhere.

\medskip

We claim that {\red $\nu^*\le \chi_E$. This will imply  $\nu^*= \chi_E$ because they have the same mass.} 

\medskip

To prove the claim, first 
note that  the monotonicity result, Theorem~\ref{thm:monotonicity} part (1),  does not require optimality of one of the target, therefore, we have  $\tau^* \le \tau.$ Therefore, the Eulerian flow $\eta^*$ for $\tau^*$ is supported inside the support of the Eulerian flow $\eta$ of $\tau$.  In particular the initial domain for the Eulerian flow $\eta^*$ is a subset of the initial domain for $\eta$, that is, $E$.  Moreover, even when there is instant stopping $\tau^*=0$ for a set $S$, since $\tau >0$ almost surely, we have  $S \subset E$. From these considerations and the fact that $\nu^* \le 1$, {\red we have  $\nu^* \le \chi_E$ as desired.} 
This completes the proof. 
\end{proof}

From a parallel reasoning we can uniquely characterize $\eta$ from Theorem~\ref{Stefan:I:2} as well:

\begin{corollary}[Unique Characterization]\label{unique_char_2}
Let $\mu, K,\eta$ and $E$ be as given in Theorem~\ref{Stefan:I:2}. Then $\eta$ is the unique weak solution of $(St_1)$ such that $\cap_{\{t>0\}}\{\eta(\cdot,t)>0\} = K$.
In other words, $E$ is the unique set for which  $(E\cap K^c, E)$ is subharmonically generated.
\end{corollary}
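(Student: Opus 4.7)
The plan is to mirror the proof of Theorem~\ref{thm:unique-E}, replacing the ``vanishes in finite time'' hypothesis by the present weaker one, $\bigcap_{t>0}\{\eta(\cdot,t)>0\}=K$. Fix an arbitrary weak solution $\tilde\eta$ of $(St_1)$ with this intersection property and set $\tilde E:=\bigcup_{t>0}\{\tilde\eta(\cdot,t)>0\}$ together with $\tilde s(x):=\sup\{t:\tilde\eta(x,t)>0\}$. Since the positive set of any weak solution of $(St_1)$ is time-decreasing, the hypothesis forces $\{\tilde s=\infty\}=K$, and hence $\{\tilde s<\infty\}=\tilde E\setminus K$. Theorem~\ref{reverse} (applied with weight $\nu\equiv 1$) then yields $\mu\leq_{SH}\tilde\nu$ for $\tilde\nu:=\chi_{\tilde E\setminus K}$ and identifies $\tilde\eta$ as the Eulerian variable associated with $(\mu,\tilde\nu)$ via the type (I) optimal Skorokhod stopping time $\tilde\tau$.

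Since $\tilde\nu\leq\chi_{K^c}=f$, $\tilde\nu$ is admissible in the optimal target problem \eqref{eqn:problem-upper} for $(\mu,f)$. Writing $\nu=\chi_{E\cap K^c}$ and $\tau^*$ for the optimal target and its optimal stopping time from Theorem~\ref{Stefan:I:2}, Theorem~\ref{thm:monotonicity}(1) (with $\mu_1=\mu_2=\mu$, $\nu_1=\nu$, $\nu_2=\tilde\nu$) gives $\tau^*\leq\tilde\tau$ almost surely. The pointwise density inequality $\eta(x,t)\leq\tilde\eta(x,t)$ is then immediate from the inclusion $\{t<\tau^*\}\subseteq\{t<\tilde\tau\}$, which yields $E\subseteq\tilde E$. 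Consequently $\nu=\chi_{E\cap K^c}\leq\chi_{\tilde E\cap K^c}=\tilde\nu$, and since $|\nu|=|\mu|=|\tilde\nu|$ by mass conservation along the Brownian flow, we obtain $\nu=\tilde\nu$ a.e. The uniqueness of the Eulerian variable attached to $(\mu,\nu)$ (Definition~\ref{Eulerian}, which relies on Lemma~\ref{lem:Eulerian_uniqueness} and the closedness of the barrier set established in Theorem~\ref{thm:tau-equal-tau-U}) then forces $\tilde\eta=\eta$.

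The ``in other words'' statement will follow by combining the uniqueness of $\eta$ just proved with Theorem~\ref{thm:sh-gen-equiv-St1}: any $E'$ for which $(E'\cap K^c,E')$ is subharmonically generated by $\mu$ produces a weak solution of $(St_1)$ with initial data $(\mu,E')$ whose long-time active region is exactly $K$, and uniqueness of such a solution then forces $E'=E$. The step I expect to require the most care is the identification $\tilde\nu=\chi_{\tilde E\setminus K}$ from the hypothesis: one must verify that the time-monotonicity built into the notion of weak solution really does give $\tilde s=\infty$ precisely on $K$, and that Theorem~\ref{reverse} applies with the weight $\nu\equiv 1$ in a manner compatible with Remark~\ref{rmk:St-St-nu}. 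Once this setup is secured, the remainder is essentially a direct transcription of the Theorem~\ref{thm:unique-E} argument.
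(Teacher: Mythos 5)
Your proposal is correct and reconstructs, essentially verbatim, the ``parallel reasoning'' the paper invokes (the paper does not spell out this proof, so this is a fair blind reconstruction). The structure — apply Theorem~\ref{reverse} to extract the target $\tilde\nu$ from an arbitrary competitor $\tilde\eta$, observe admissibility $\tilde\nu\leq f=\chi_{K^c}$, invoke Theorem~\ref{thm:monotonicity}(1) to get $\tau^*\leq\tilde\tau$ hence $\eta\leq\tilde\eta$ hence $E\subseteq\tilde E$, then close with mass conservation and uniqueness of the Eulerian variable — mirrors exactly the argument given for Theorem~\ref{thm:unique-E} and Corollary~\ref{vanishing}. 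Two remarks. First, you correctly flag the one spot requiring care: the hypothesis $\cap_{t>0}\{\tilde\eta(\cdot,t)>0\}=K$ combined with time-monotonicity of the positivity set yields $\{\tilde s=\infty\}\cap\tilde E=K$ (in particular $K\subseteq\tilde E$), so that the conclusion of Theorem~\ref{reverse}, read in the sense of Remark~\ref{rmk:St-St-nu}, identifies $\tilde\nu=\chi_{\tilde E\cap K^c}$; the paper's displayed formula $\tilde\nu=\nu\chi_{\{s<\infty\}}$ must be interpreted with $s$ restricted to $\tilde E$ (or equivalently with $\rho$ carrying no mass where $\tilde\eta\equiv0$), and your reading is the intended one. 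Second, a minor simplification your argument enjoys relative to the proof of Theorem~\ref{thm:unique-E}: in the $f\equiv1$ setting the paper has to separately handle the instant-stopping set $S$ of $\tau^*$, whereas here $f\wedge\mu=0$ forces $\tau^*>0$ almost surely, so $\nu^*=\chi_{E\cap K^c}$ outright and the comparison $\nu\leq\tilde\nu$ follows cleanly from $E\subseteq\tilde E$ without that extra case.
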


\section{Monotonicity of the optimal barrier functions}\label{sec:mono_barrier_function}

We show in this section that the barrier function $s(x)$ for the optimal stopping time $\tau$  {\red (the solution to the optimal free target problem \eqref{eqn:problem-upper})}  enjoys a monotonicity property in the spirit of Theorem~\ref{thm:monotonicity}. Namely, the barrier functions $s_i$ are ordered if the initial distributions $\mu_i$ are ordered (Theorem~\ref{thm:s_mono}), and such {\red an} order is strict (Theorem~\ref{thm:stric-s-mono}). {\red This ordering property yields a  {\it strict} comparison principle for the Stefan problems, where a local strict ordering of $\mu_i$ implies global strict ordering on $s_i$'s;  this local-to-global strictness is new, to our best knowledge, even for $(St_2)$, where only global-global strict comparison was proven before \cite{kim03}.  The proof is entirely probabilistic and is based on the properties of the optimal free target problem. It thus provides new insights into the Stefan problems, in particular for the supercooled Stefan problem $(St_1)$. Especially, we} hope it may shed a light on understanding regularity of the barrier function $s(x)$.  Understanding regularity of $s(x)$, so regularity of the free boundaries of the Stefan problem, is a wide open problem in $(St_1)$ for any class of solutions beyond one space dimension.

\medskip

Our results in Theorem~\ref{thm:tau-equal-tau-U} show 
{\red that for the optimal stopping time, the space-time barrier set $R$ is nothing but $R^U$, thus is closed, and the barrier function $s=s^U$ is lower-semicontinuous  for type (I) and upper semicontinuous for type (II). This then can be translated, via the results in Section~\ref{sec:global-Stefan}, to corresponding properties for the free boundaries of the Stefan problems $(St_1)$ and $(St_2)$.}
To our best knowledge even this very mild regularity result for {\red the free boundary} is new for $(St_1)$; {\red even though it was for an appropriately chosen initial domain $E$.} However, for such semicontinuity we only use the optimality of the stopping time  for the optimal Skorokhod problem $\mathcal{P}(\mu, \nu)$ {\red \eqref{eqn:problem-OSP}}. Our monotonicity of {\red the barrier function} below, which is a result of optimality of the target measure, may lead to  a nicer regularity result. 
\medskip

{\red In view of Theorem~\ref{thm:tau-equal-tau-U}(b) and for clarity, we let $s^U$ denote the barrier function from now on.}

\begin{theorem}[Monotonicity for optimal barrier functions]\label{thm:s_mono}
 Assume {\red $0\le \mu_1 \le \mu_2 \in L^\infty(\R^d)$ be compactly supported} and let
  $0\le f\in L^{\infty}(\R^d)$.
  Let $\nu_i$, $i=1,2,$ be the optimal solutions of  the Problem \eqref{eqn:problem-upper} with $\mu_i$, $i=1,2$, respectively. 
 Let $s_i^U$ be the barrier functions  associated with $(\mu_i, \nu_i)$ as  in Definition~\ref{barrier_set}. {\red In the type (II) case assume that $\tau_i >0$, $i=1,2$.} Then we have 
 \begin{align*}
&  s_1 ^U\le s_2^U \quad  \hbox{ \red   for type (I) cost, and }\\
 &  s_1^U \ge s_2 ^U \quad   \hbox{for type (II) cost.}
\end{align*}
\end{theorem}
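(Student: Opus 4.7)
The plan is to argue by contradiction. The two main ingredients are the stopping-time monotonicity $\tau_1 \le \tau_2$ from Theorem~\ref{thm:monotonicity}(1), valid for Brownian paths starting from $\mu_1 \le \mu_2$ (with $\tau_2$ restricted to the initial distribution $\mu_1$ as in Remark~\ref{rmk:tau2-tau1-general}), together with the boundary-hitting identity $\tau_i = s_i(W_{\tau_i})$ almost surely from Proposition~\ref{lem:hitting_s}. If the claimed $s$-inequality fails on a set of positive $\nu_1$-measure, I will force $\tau_1 = \tau_2$ on a positive-probability event and contradict the strict $s$-inequality defining that event.

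For the type (I) case, suppose $\nu_1(\{s_1 > s_2\}) > 0$, so the event $A := \{W_{\tau_1} \in \{s_1 > s_2\}\}$ has positive probability. On $A$, Proposition~\ref{lem:hitting_s} gives
$$\tau_1 \;=\; s_1(W_{\tau_1}) \;>\; s_2(W_{\tau_1}),$$
which places $(W_{\tau_1},\tau_1)$ in the forward barrier $R_2 = \{(x,t): t \ge s_2(x)\}$. Since $\tau_2$ is the first hitting time to $R_2$ (Definition~\ref{barrier_set} and Theorem~\ref{thm:tau-equal-tau-U}), this forces $\tau_2 \le \tau_1$ on $A$; together with $\tau_1 \le \tau_2$ we get $\tau_1 = \tau_2$ and $W_{\tau_1} = W_{\tau_2}$, so $s_1(W_{\tau_1}) = \tau_1 = \tau_2 = s_2(W_{\tau_2}) = s_2(W_{\tau_1})$, the desired contradiction.

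For the type (II) case I focus first on the portion $\tilde\nu_1 := \nu_1 - f \wedge \mu_1$ generated by paths with $\tau_1 > 0$, to which Proposition~\ref{lem:hitting_s} applies; the $f \wedge \mu_1$ portion, stopped by the $t=0$ randomization, is handled separately using Theorem~\ref{thm:saturation}. Assume $\tilde\nu_1(\{s_1 < s_2\}) > 0$ and let $A := \{\tau_1 > 0,\ W_{\tau_1} \in \{s_1 < s_2\}\}$. On $A$, $\tau_1 = s_1(W_{\tau_1}) < s_2(W_{\tau_1})$. Adopting the canonical choice $R_2 = R_2^{U,b}$ and $s_2 = s_2^{U,b}$ from Definition~\ref{barrier_set} and Theorem~\ref{thm:tau-equal-tau-U}, the monotonicity and continuity of the potential flow (Corollaries~\ref{lem:mono-potential} and~\ref{cor:uniform-U}) identify the sub-level set $\{t \le s_2^{U,b}(x)\}$ with the equality set $\{t : U_{\mu_{2,t}}(x) = U_{\mu_2}(x)\}$; hence $\tau_1 < s_2^{U,b}(W_{\tau_1})$ yields $U_{\mu_{2,\tau_1}}(W_{\tau_1}) = U_{\mu_2}(W_{\tau_1})$. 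Since $\tau_1 > 0$, Definition~\ref{def:potential-barrier} then gives $\tau_2 = \tau_2^{U,b} \le \tau_1$, and the contradiction closes exactly as in type (I).

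The main obstacle I expect is the type (II) step: paths trivially lie in the backward barrier $R_2 = \{t \le s_2(x)\}$ at $t=0$, so the naive ``first hitting time'' reading of $\tau_2$ does not immediately deliver $\tau_2 \le \tau_1$. Re-expressing the barrier via the potential flow, as developed in Section~\ref{sec:potential}, is essential: it converts the geometric condition $\tau_1 \le s_2(W_{\tau_1})$ at the positive time $\tau_1$ into the analytic identity $U_{\mu_{2,\tau_1}}(W_{\tau_1}) = U_{\mu_2}(W_{\tau_1})$, which plugs directly into the infimum over $t > 0$ defining $\tau_2^{U,b}$. A secondary technical point is the $f \wedge \mu_1$ portion of $\nu_1$ in type (II), which by Theorem~\ref{thm:saturation} consists of points absorbed instantly for both problems, making the inequality $s_1 \ge s_2$ either vacuous or immediate at such points.
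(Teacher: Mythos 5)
Your approach is correct and genuinely more probabilistic than the paper's. For type (I) the paper introduces potential-flow variables $U_1 = U_{\nu_1}-U_{\mu^1_t}$, $\tilde U_2 = U_{\tilde\nu_2}-U_{\tilde\mu^2_t}$, $U_2 = U_{\nu_2}-U_{\mu^2_t}$, shows $\tilde U_2\le U_2$ by linearity, and establishes the inclusion $\{U_1(\cdot,t)>0\}\subset\{\tilde U_2(\cdot,t)>0\}$ for a.e.\ $x$ by an argument combining Lemma~\ref{lem:Ito-closed-set} (Ito's formula) with Lebesgue-density visits of the Brownian path; only at the end is Theorem~\ref{thm:tau-equal-tau-U}\eqref{2} used to pass from $s^U_i$ to $s_i$. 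You collapse this into a single direct contradiction: on the bad event $\tau_1 = s_1(W_{\tau_1}) > s_2(W_{\tau_1})$, $\tau_1\le\tau_2$, and the hitting-time reading of $\tau_2$ force $\tau_1=\tau_2$, whereupon Proposition~\ref{lem:hitting_s} applied to $\tau_2$ yields $\tau_1 > s_2(W_{\tau_1}) = \tau_2 = \tau_1$. This is shorter and cleaner, and your use of the identification of $\tau_i$ with the hitting time to the canonical barrier $R^U_i$ is exactly parallel to the paper's own step (``$(W_{\tilde t},\tilde t)$ belongs to the barrier $\{\tilde U_2=0\}$ of $\tilde\tau_2$, and thus $\tilde t\ge\tilde\tau_2$''), so that is not an extra gap on your part. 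For type (II), however, the picture is reversed: the paper's purely potential-theoretic chain $U_1\le\tilde U_2\le U_2$, which follows immediately from Corollary~\ref{lem:U-stopping-strict} and additivity of the Newtonian potential, already gives $s_1^U\ge s_2^U$ everywhere with no path argument, and is notably simpler than yours. Your path-based route also leaves a technical issue open: the closing step applies Proposition~\ref{lem:hitting_s}(II) to $\tau_2$, but that proposition is stated under the hypothesis $\mu_2\wedge\nu_2 = 0$, which fails on the instantly-stopped portion $f\wedge\mu_2$. You flag that the $f\wedge\mu_1$ part of $\nu_1$ must be treated separately via Theorem~\ref{thm:saturation}, but do not say how to carry the identity $\tau_2 = s_2(W_{\tau_2})$ through when $\mu_2\wedge\nu_2\neq 0$ (and the reduced pairs $\tilde\mu_i = \mu_i - f\wedge\mu_i$ come with different constraints $\tilde f_i = f-f\wedge\mu_i$, so Theorem~\ref{thm:monotonicity} cannot simply be re-applied to them). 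This can likely be repaired by restricting the hitting-time identity to $\{\tau_2>0\}$, on which you indeed land, but the cleaner fix is simply to replace your type (II) paragraph with the paper's potential chain.
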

\begin{proof}
 Due to Theorem~\ref{thm:tau-equal-tau-U}(b), the corresponding optimal stopping time $\tau_i$'s, the barriers, and the barrier functions for $(\mu_i, \nu_i)$ can be represented as  $\tau_i^U$, $R_i^U$, and $s_i^U$, respectively.
 We utilize the potential flow formulation in Section~\ref{sec:potential} as well as the monotonicity  result for the optimal target problem (Theorem~\ref{thm:monotonicity}).
  {\red In what is below for ease of notation we use the notation $\tau_i$'s for the optimal stopping time $\tau_i^U$, but, we will keep the superscript $U$ for $R_i^U$ and $s_i^U$.}

\medskip

{\red  Let $\mu_t^i$, $i=1,2$,  be the distribution of $W_{\tau_i\wedge t}$, that is, $W_{\tau_i\wedge t} \sim \mu_t^i$.}
Below we treat type (I) and (II) separately. 

{\red   We derive the results by examining  the Ito's formula given in  Lemma~\ref{lem:Ito-closed-set}. 

\medskip
{\bf Type (I).}
Define the functions \begin{align*}
&\hbox{
$U_1(x, t) := U_{\nu_1} (x)-U_{\mu_t^1}(x)$
 and 
$U_2 (x,t) := U_{\nu_2} (x)-U_{\mu_t^2}(x)$.}
\end{align*}
}
Notice that they are all nonnegative, continuous, and monotone decreasing in time,  due to Corollary~\ref{cor:uniform-U} and Corollary~\ref{lem:mono-potential}.
{\red
 {
 Fix $t$ and $x$. Consider an $\epsilon$-ball $B_\epsilon(x)$ around $x$. Then, 
  Lemma~\ref{lem:Ito-closed-set} yields that 
  \begin{align*}%\label{formula:s-i}
 \int_{B_\epsilon (x)} U_1 (y, t)dy = \int_{B_\epsilon (x)} \left(U_{\nu_1} - U_{\mu_t^1}\right)(y) d y \,  &= \,\mathbb{E}\left[ \int_{\tau_1\wedge t}^{\tau_1}   \frac{1}{2} \chi_{B_\epsilon(w)} (W_t) dt\right],\\
 \int_{B_\epsilon (x)} U_2 (y, t) dy =  \int_{B_\epsilon (x)}  \left(U_{\nu_2} - U_{\mu_t^2}\right)(y) d y \, &= \,\mathbb{E}\left[ \int_{\tau_2\wedge t}^{\tau_2}   \frac{1}{2} \chi_{B_\epsilon (x)} (W_t) dt\right].
\end{align*}
For the expected value $\,\mathbb{E}\left[ \int_{\tau_1\wedge t}^{\tau_1}   \frac{1}{2} \chi_{B_\epsilon(w)} (W_t) dt\right],$ exactly those Brownian paths (staring from the distribution $\mu_1$) for which $t < \tau_1$ contributes, while for $\mathbb{E}\left[ \int_{\tau_2\wedge t}^{\tau_2}   \frac{1}{2} \chi_{B_\epsilon (x)} (W_t) dt\right]$ exactly those paths (starting from the distribution $\mu_2$)  for which $t < \tau_2$ contribute. Now, recall that $\mu_1 \le \mu_2$ and $\tau_1 \le \tau_2$ (due to Theorem~\ref{thm:monotonicity}(1), for the restriction of $\tau_2$ on those paths starting from the distribution $\mu_1$). Therefore, the set of paths that contribute to the expected value $\mathbb{E}\left[ \int_{\tau_1\wedge t}^{\tau_1}   \frac{1}{2} \chi_{B_\epsilon(w)} (W_t) dt\right],$ is contained in the he set of paths that contribute to the expected value $\mathbb{E}\left[ \int_{\tau_2\wedge t}^{\tau_2}   \frac{1}{2} \chi_{B_\epsilon (x)} (W_t) dt\right]$. As a result, we have
\begin{align*}
 \mathbb{E}\left[ \int_{\tau_1\wedge t}^{\tau_1}   \frac{1}{2} \chi_{B_\epsilon(w)} (W_t) dt\right] 
 \le \mathbb{E}\left[ \int_{\tau_2\wedge t}^{\tau_2}   \frac{1}{2} \chi_{B_\epsilon (x)} (W_t) dt\right]
\end{align*}
which implies
\begin{align*}
\frac{1}{B_\epsilon(x)}  \int_{B_\epsilon (x)} U_1 (y, t)dy \le \frac{1}{B_\epsilon(x)}   \int_{B_\epsilon (x)} U_2 (y, t) dy.
\end{align*}
However, as $\epsilon\to 0$, due to continuity of $U_1$ and $U_2$, 
we get  $U_1(x, t) \le U_2(x, t)$. 
Note that from the definition
$$s_i^U(x) : = \inf\{ t \ | \ U_i(x, t) =0\}.$$
Therefore, $s_1^U \le s_2^U$ as desired. 

\medskip

{\bf Type (II).} 
 Define the functions 
\begin{align*}
&\hbox{
$U_1(x, t) := U_{\mu_t^1}(x) - U_{\mu_1}(x) $ and 
$U_2 (x,t) := U_{\mu_t^2}(x)- U_{\mu_2} (x)$.}
\end{align*}
Similarly to the type (I) case apply Lemma~\ref{lem:Ito-closed-set}, and get for each $(x,t)$ and $\epsilon>0$, 
 \begin{align*}
 \int_{B_\epsilon (x)} U_1 (y, t)dy = \int_{B_\epsilon (x)} \left(U_{\mu_t^1} - U_{\mu_1}\right)(y) d y \,  &= \,\mathbb{E}\left[ \int_0^{\tau_1\wedge t} \frac{1}{2} \chi_{B_\epsilon(w)} (W_t) dt\right],\\
 \int_{B_\epsilon (x)} U_2 (y, t) dy =  \int_{B_\epsilon (x)}  \left(U_{\mu_t^2} - U_{\mu_2}\right)(y) d y \, &= \,\mathbb{E}\left[ \int_0^{\tau_2\wedge t}  \frac{1}{2} \chi_{B_\epsilon (x)} (W_t) dt\right].
\end{align*}
Since  $\mu_1 \le \mu_2$ and $\tau_1 \le \tau_2$ (due to Theorem~\ref{thm:monotonicity}(1), for $\tau_2$ restricted to $\mu_1$), as in the type (I) case above, we can compare the set of Brownian paths that contribute to the expected values. Then we get 
$\mathbb{E}\left[ \int_0^{\tau_1\wedge t} \frac{1}{2} \chi_{B_\epsilon(w)} (W_t) dt\right]
\le \,\mathbb{E}\left[ \int_0^{\tau_2\wedge t}  \frac{1}{2} \chi_{B_\epsilon (x)} (W_t) dt\right].$ Thus using continuity of $U_1$ and $U_2$, we get 
$U_1(x, t) \le U_2 (x,t)$. Because
$$s_i^U(x) : = \sup\{ t \ | \ U_i(x, t) =0\},$$
this implies $s_1^U \ge s_2^U$ as desired. This completes the proof. 
}
}
\end{proof}

We now prove {\em strict} monotonicity for $s^U$, which is derived by combining the above theorem 
with the monotonicity of the stopping time (Theorem \ref{thm:monotonicity}), {\red Proposition~\ref{lem:hitting_s},}  and  the saturation result (Theorem~\ref{thm:saturation}). {\red In the discussion below we use the assumption given in Theorem~\ref{thm:s_mono}.}

\medskip

As a preparation of the statement of the result, let us recall that the {\red barrier sets $R_i= R_i^U$,} $i=1,2$ for the stopping times in Theorem~\ref{thm:s_mono}
 are closed, therefore their complements are open. 
 Define for $i=1,2$, {\red by using the Eulerian variable $(\eta_i, \rho_i)$ of $\tau_i$, }
 \begin{align}\label{eqn:E-i}
\hbox{in type (I),} & \quad  E_i  := \limsup_{t \to 0^+} \{ x \ | \ \eta_i (x, t) >0 \}, \\\nonumber
\hbox{in type (II),} & \quad  E_i  := \limsup_{t \to \infty} \{ x \ | \ \eta_i (x, t) >0 \}.
 \end{align} 
 Note that due to the {\red time} monotonicity of $R_i$, $E_i= R_i^c = \cup_{t>0}\{\eta_i(\cdot,t)>0\}$,  thus they are open.
 Note also that as $E_i$'s are connected to  the active regions, the Brownian paths that start from $E_i$'s have $\tau_i>0$ almost surely, except those in type (II) that may immediately stop; see Theorem~\ref{thm:saturation}, where the mass that immediately stop in type (II) is characterized by the initial data as $\mu \wedge f$. Thus {\red in type (II) case,} by taking the initial data as $\mu-\mu\wedge f$ we may assume $\tau_i>0$ in $E_i's$.
  The complement $E_2^c$ is the set where there is immediate and complete stopping  for $\tau_2$, so $\tau_2$ is zero, and so is $\tau_1=0$ from monotonicity (Theorem \ref{thm:monotonicity});
{\red  in  $E_2^c$}    the corresponding $s_i^U$'s are $0$ in type (I) case, $\infty$ in type (II) case.
Moreover, in type (I) case, $E_2^c$ is exactly the set where the immediate stopping occurs for $\tau_2$ {\red  so also for $\tau_1$.} This justifies that for comparison between $s_i^U$ we can without loss of generality assume that $\tau_i>0$ in both type (I) and (II), and compare $s_i^U$'s only over $E_2$. 
Also assuming $f>0$ everywhere does not cost much generality  for our purpose of comparing $s_i^U$'s, because in the region $f=0$ no stopping to accumulate mass occurs, so the value $s_i^U$ can only be either $\infty$ or $0$. With these considerations we see that the following theorem essentially covers the general case and the whole domain for the strict monotonicity of $s^U$. 

\medskip

\begin{theorem}[Strict monotonicity for optimal barrier functions]\label{thm:stric-s-mono}
Let $f$, $\mu_i$, $\tau_i$ and $s_i^U$ be as given in Theorem~\ref{thm:s_mono}, in particular, with $\mu_1 \le \mu_2$ and {\red $s_1^U \leq s_2^U$.} Assume that $f>0$ everywhere and that   
$\tau_i >0$, $i=1,2$, almost surely. 
Let $O$ be a path connected component of $E_2$ given in \eqref{eqn:E-i}. 
Suppose $\mu_2 > \mu_1$ on a subset $G \subset O$ with $|G|>0$. Then $s_1^U (x) \not = s_2^U(x)$ for a.e. $x \in O$. 
\end{theorem}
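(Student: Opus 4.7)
The plan is to combine the potential-flow decomposition from the proof of Theorem~\ref{thm:s_mono} with a strict positivity argument for the extra-mass Eulerian flow and a coupling observation comparing $\tau_1$ with the restriction $\tilde{\tau}_2$ of $\tau_2$ to the initial data $\mu_1$.

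First, I would set up as in the proof of Theorem~\ref{thm:s_mono}: decompose the Eulerian flow of $\tau_2$ as $\eta_2 = \tilde{\eta}_2 + \eta_2'$, where $\eta_2'$ is the flow from the extra mass $\mu_2 - \mu_1 \ge 0$ (supported on $G \subseteq O$) via $\tau_2$, with the corresponding potential decomposition $U_2 = \tilde{U}_2 + V$, where $V(x,t) := \tfrac{1}{2}\int_t^\infty \eta_2'(x,s)\,ds$. Because $f>0$ everywhere and $\partial O \subseteq E_2^c = \{s_2^U=0\}$, Brownian paths starting from $G$ stop immediately upon exiting $O$, so $\eta_2'$ is supported in $\overline{O}\times [0,\infty)$. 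The active region $A_2 := \{(x,t): x\in O,\ 0<t<s_2^U(x)\}$ is an open, path-connected subset of space-time, since $O$ is open and path-connected (being a connected component of the open set $E_2$) and $s_2^U$ is lower semicontinuous. In $A_2$, $\eta_2'$ solves the heat equation with zero Dirichlet data on $\partial A_2$ and nontrivial initial data on $G$, so by the strong minimum principle for nonnegative caloric functions $\eta_2'>0$ throughout $A_2$, and consequently $V>0$ in $A_2$.

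Next, I would argue by contradiction. Suppose $A := \{x\in O : s_1^U(x) = s_2^U(x)\}$ has positive Lebesgue measure. Since $s_1^U \le s_2^U$ ($\nu_1$-a.e.) and $s_2^U>0$ on $O$, we have $A \subseteq E_1\cap O$, and the saturation theorem (Theorem~\ref{thm:saturation}) together with $\tau_i>0$ gives $\nu_1 = \nu_2 = f$ on $A$. The key observation is that $(\nu_2 - \tilde{\nu}_2)|_A = 0$: for any Brownian path from $\mu_1$ with $W_{\tau_1}\in A$, Proposition~\ref{lem:hitting_s} yields $\tau_1 = s_1^U(W_{\tau_1}) = s_2^U(W_{\tau_1})$, so $(W_{\tau_1},\tau_1) \in R_2^U$ and hence $\tilde{\tau}_2 \le \tau_1$; combined with $\tau_1\le \tilde{\tau}_2$ from Theorem~\ref{thm:monotonicity}, we get $\tilde{\tau}_2 = \tau_1$ along all such paths, so $\tilde{\nu}_2|_A = \nu_1|_A = f|_A = \nu_2|_A$.

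Finally, I would derive the contradiction using a Hopf-type argument at the free boundary $\partial A_2$. The extra stopping measure $\rho_2'$ is supported on the graph $\{(x, s_2^U(x)) : x\in O\}$ and its spatial marginal equals $\nu_2-\tilde{\nu}_2$. Since $\eta_2'>0$ in $A_2$ and vanishes on $\partial A_2$, Hopf's lemma (applied at Lebesgue-a.e.\ $x\in A$ where $\partial A_2$ has sufficient local regularity, as provided by parabolic obstacle regularity of $w_2$ from \cite{C04}) gives a strictly negative inward normal derivative of $\eta_2'$ on $\partial A_2$, hence strictly positive stopping flux $\rho_2'$ over the graph above $A$. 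Integrating yields $\int_A (\nu_2-\tilde{\nu}_2) > 0$, contradicting the previous step. Thus $|A|=0$, which is the claim. The main obstacle will be the rigorous justification of Hopf's lemma in this final step: the free boundary $\partial A_2$ is a priori only the graph of the lower semicontinuous $s_2^U$ and need not satisfy an interior sphere condition. I would try to overcome this either by leveraging the $C^{1,\alpha}$-in-space and Lipschitz-in-time regularity of $w_2$ from the parabolic obstacle problem to establish the required boundary regularity at a.e.\ point of $A$, or by an integrated variant in which the divergence theorem is applied to $\eta_2'$ on shrinking space-time cylinders at $(x_0, s_2^U(x_0))$, forcing $\eta_2'$ to vanish in a neighborhood of such $x_0$ and contradicting the strong positivity established in the first step.
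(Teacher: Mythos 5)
Your first two steps are correct and in fact give a cleaner route to the intermediate conclusion than the paper's own argument. The paper shows $\nu_2[S]=0$ (with $S$ essentially your $A$) by an averaging identity: writing $\nu_i[S]=\int_G \mathrm{Prob}[W_{\tau_i}\in S\,|\,W_0=x]\,d\mu_i + \int_{O\setminus G}(\cdots)$, exploiting $\nu_1[S]=\nu_2[S]$ together with the pointwise inequality $\mathrm{Prob}[W_{\tau_2}\in S\,|\,W_0=x]\ge \mathrm{Prob}[W_{\tau_1}\in S\,|\,W_0=x]$ to force $\int_G \mathrm{Prob}[W_{\tau_1}\in S\,|\,W_0=x](d\mu_2-d\mu_1)=0$, and hence vanishing of these conditional probabilities for a.e.\ $x\in G$. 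You instead observe directly, via the coupling $\tau_1\le\tilde\tau_2$ and Proposition~\ref{lem:hitting_s}, that any path from $\mu_1$ landing in $A$ satisfies $\tau_1=\tilde\tau_2$, so $\tilde\nu_2|_A\ge\nu_1|_A=f|_A$, hence $\nu_2'|_A=0$. This is a valid and arguably more transparent derivation. (Minor inaccuracy: $\mu_2-\mu_1$ is positive on $G$ but need not be \emph{supported} on $G$; this does not affect the argument since it suffices that the excess mass places positive mass in $G\subset O$.)

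The genuine gap is exactly where you flag it: the Hopf-lemma step. The barrier function $s_2^U$ is only lower semicontinuous (Lemma~\ref{lem:Z-U-cloed}); $\partial A_2$ is a priori just the graph of a lower semicontinuous function and does not satisfy an interior ball or cone condition, so Hopf's lemma has no purchase. The proposed workarounds (parabolic obstacle regularity at a.e.\ point, or shrinking space-time cylinders) are not substantiated in the $(St_1)$-type setting where $w_t\le 0$ and the free boundary can be quite wild. The paper's proof avoids this entirely. Rather than estimating the boundary flux, it uses only the \emph{soft} positivity statement that a restricted Eulerian flow, positive throughout the connected active region $A$, yields a stopped distribution positive wherever $\nu_2$ is positive — a consequence of the strong Markov property and connectivity of $A$, requiring no free-boundary regularity whatsoever. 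You have already established $\eta_2'>0$ throughout $A_2$ by the strong minimum principle; replacing your Hopf argument by this Markov-property / connectivity argument (as the paper does for its own $\tilde\eta_2$ built from $\mu_2|_G$) closes the gap and makes your approach rigorous. In short: your decomposition and coupling argument differ from the paper's averaging argument but work; the final positivity step should be done softly, not via Hopf.
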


\begin{proof}
From the continuity of Brownian paths, almost surely any Brownian path starting from $O$ should stay inside the corresponding path-connected component of the active region, say $A \subset \R^d \times \R_{\ge 0}$,  that is connected to $O$, that is, 
\begin{align*}
\hbox{$A=\{ (x, t) \ | \ x \in O, \ \ t < s_2^U(x)\}$ in type (I),  $A=\{ (x, t) \ | \ x \in O, \ \ t > s^U_2(x)\}$ in type (II).}
\end{align*}
From this and the condition $\mu_1\le \mu_2$, we may assume that $O=E_2$, namely, we consider only those mass  distributions, Brownian motion, and the barriers, associated to $O$; for example, we assume $\mu_i|_E =\mu_i|_O$, $\nu_i|_E =\nu_i|_O$.  
Notice that $A$ is a connected and an open set and $\eta_2>0$ on $A$. From this we also see that  
$$
\nu_2 >0 \hbox{ on }O.
$$
(To see this,  we can for instance apply   Lemma~\ref{lem:nu_below_W_t}, which uses optimality of $\nu_2$ with respect to Problem~\ref{eqn:problem-upper}.)
Therefore, it suffices to show that $s_1^U(x) \not = s_2^U(x)$ for $\nu_2$-a.e. $x$.

\medskip

{\red  Also Theorem~\ref{thm:saturation} yields that $\nu_i =f $ on its support, since we assume that $\tau_i>0$.
Therefore to prove the theorem it suffices to show that
\begin{align}\label{eq:last}
\nu_2 [S]=0,
\end{align}
where
\begin{align*}
S:=    \{ x \ | \ s_1^U (x) = s_2^U (x) <\infty \} \cap \{ x \ | \ \nu_1 (x)  = \nu_2 (x) =f(x) \} \cap O.
\end{align*}
For the rest of the proof, we verify this claim. 
}

\medskip

Recall  that $\tau_1 \le \tau_2$ and $\nu_1 \le \nu_2$ from Theorem~\ref{thm:monotonicity}.
Because of the Markov property of Brownian motion, the inequality {\red $\tau_1 \le \tau_2$} can be applied to all Brownian paths from the initial distribution $\mu_2$ not only from $\mu_1$ (see Remark~\ref{rmk:tau2-tau1-general}).

\medskip

{\red 
From optimality of $\tau_i$'s we can apply
 Proposition~\ref{lem:hitting_s}, to get $s_i^U(W_{\tau_i}) =\tau_i$, $i=1,2$ a.s. Now if  $s_1^U(W_{\tau_1}) = s_2^U(W_{\tau_1})$ then almost surely $\tau_1 =  s_2^U(W_{\tau_1})$, so from the hitting time charaterisation of $\tau_2$, with the barrier $s_2^U$, we have $\tau_1 \ge \tau_2$. Since we already have $\tau_1 \le \tau_2$ a.e., we therefore have,}
\begin{align}\label{eqn:s-to-tau}
\hbox{almost surely, if $s_1^U(W_{\tau_1}) = s_2^U(W_{\tau_1})$ then $ \tau_2=\tau_1$.}
\end{align}

\medskip

 We have  from $W_0\sim \mu_i|_O$, $W_{\tau_i} \sim \nu_i|_O$ that 
 \begin{align*}
 \nu_i [S] = \int_O Prob[W_{\tau_i} \in S | W_0 = x ] d\mu_i(x) \quad \hbox{ for $i=1,2$.}
\end{align*}

Let $G$ be as given in the {\red statement of this} theorem, and observe that $\mu_2 [G]>0$. 
Then, for $i=1,2$, 
\begin{align}\label{eqn:S-G-integral}
\nu_i [S] 
&  = \int_{G} Prob[W_{\tau_i} \in S | W_0 = x ] d\mu_i(x)  + \int_{O\setminus G}  Prob[W_{\tau_i} \in S | W_0 = x ] d\mu_i(x).
\end{align}

\medskip

{\red On the other hand,}  
\begin{align*}
&Prob[W_{\tau_2} \in S \ | \ W_0 = x ] \\
&= Prob[W_{\tau_2} \in S \  \& \ \tau_2 = \tau_1 \ | \ W_0 = x  ]  + Prob[W_{\tau_2} \in S \ \& \ \tau_2 > \tau_1  | \  W_0 = x  \ ]  \\
& \ge Prob[W_{\tau_2} \in S  \ \& \ \tau_2=\tau_1 \ | \ W_0 = x  ]  \\
& = Prob[W_{\tau_1} \in S  \ \& \ \tau_2=\tau_1  \ | \ W_0 = x  ]  .
\end{align*}

\medskip

From \eqref{eqn:s-to-tau} it holds that  for $\mu_2$-a.e. $x$, 
  \begin{align*}
 Prob[W_{\tau_1} \in S \ \& \ \tau_2=\tau_1 \ | \ W_0 = x  ]   = Prob[W_{\tau_1} \in S \ | \ W_0 = x]  .
\end{align*}
Therefore, from the previous inequality we have for $\mu_2$-a.e. $x$, 
\begin{align*}
 Prob[W_{\tau_2} \in S \ | \ W_0 = x ] \ge  Prob[W_{\tau_1} \in S \ | \ W_0 = x]  .
\end{align*}
 Apply this to the above integrals \eqref{eqn:S-G-integral}
 and get
 {\red 
\begin{align*}
\nu_2 [S] - \nu_1 [S] & \ge \int_G Prob[W_{\tau_2} \in S | W_0 = x ] d\mu_2(x) - \int_G Prob[W_{\tau_1} \in S | W_0 = x ] d\mu_1(x)\\
& \ge \int_G Prob[W_{\tau_1} \in S | W_0 = x ] d\mu_2(x) - \int_G Prob[W_{\tau_1} \in S | W_0 = x ] d\mu_1(x)\\
& = \int_G Prob[W_{\tau_1} \in S | W_0 = x ] d(\mu_2 - \mu_1)(x) \\
& \ge 0.
\end{align*}
}
On the other  hand, due to the definition of $S$, $\nu_1(S) = \nu_2(S)$. This and the condition $\mu_2 > \mu_1$ on $G$ yields
\begin{equation}\label{last}
Prob[W_{\tau_2} \in S \ | \ W_0 = x ]=  Prob[W_{\tau_1} \in S \ | \ W_0 =x] =0\hbox{ for } \mu_2\hbox{-a.e. } x \hbox{ in }  G.
\end{equation}

\medskip

Let $\tilde \tau_2$ be the restriction of $\tau_2$ to the to the initial distribution $\mu_2|_G$ and consider its Eulerian flow $\tilde \eta_2. $  Since the active set $A$ is a {\red path-}connected open set, we have $\tilde \eta_2>0$ everywhere in $A$. Recall from above that $\nu_2>0$ on $O$ and that $\eta_2>0$ on $A$ for the Eulerian flow $\eta_2$ of $\tau_2$ with $\mu_2$. In other words, $\tilde \eta_2>0$ wherever $\eta_2>0$. Therefore, from the Markov property of the Brownian motion, the resulting target distribution $\tilde \nu_2$ of $\tilde \tau_2$ has $\tilde \nu_2 >0$ on $O$, because it has to be positive wherever $\nu_2 >0$. 
Now, \eqref{last}  implies that $\tilde \nu_2[S]=0$, so we can conclude  with \eqref{eq:last}. This completes the proof. 
\end{proof}

%%%%%%%%%%%%%%%%%%%%%%%%%%%%%%%%%%%%%%%%%%%%%%
%% Single Appendix:                         %%
%%%%%%%%%%%%%%%%%%%%%%%%%%%%%%%%%%%%%%%%%%%%%%
%\begin{appendix}
%\section*{???}%% if no title is needed, leave empty \section*{}.
%\end{appendix}
%%%%%%%%%%%%%%%%%%%%%%%%%%%%%%%%%%%%%%%%%%%%%%
%% Multiple Appendixes:                     %%
%%%%%%%%%%%%%%%%%%%%%%%%%%%%%%%%%%%%%%%%%%%%%%
%\begin{appendix}
%\section{???}
%
%\section{???}
%
%\end{appendix}

%%%%%%%%%%%%%%%%%%%%%%%%%%%%%%%%%%%%%%%%%%%%%%
%% Support information, if any,             %%
%% should be provided in the                %%
%% Acknowledgements section.                %%
%%%%%%%%%%%%%%%%%%%%%%%%%%%%%%%%%%%%%%%%%%%%%%
\begin{acks}[Acknowledgments]
We thank Mathav Murugan for various helps regarding probabilistic aspects.  We also thank Paul Gassiat for helpful comments on Section~\ref{sec:potential}. {\red We also thank the anoymous referees for helpful comments, especially 
pointing out an error in the original statement of Proposition~\ref{lem:hitting_s} in the previous version, {\redf as well as letting us know the connection between Problem~\ref{eqn:problem-upper} and the notion of the shadow of a measure introduced in \cite{Rost71, beiglboeck-juillet2016} (see also \cite{bruckerhoff2021shadows}),  among others.}}
%
% The authors would like to thank ...
\end{acks}
%%%%%%%%%%%%%%%%%%%%%%%%%%%%%%%%%%%%%%%%%%%%%%
%% Funding information, if any,             %%
%% should be provided in the                %%
%% funding section.                         %%
%%%%%%%%%%%%%%%%%%%%%%%%%%%%%%%%%%%%%%%%%%%%%%
\begin{funding}
IK is partially supported by {\redf the National Science Foundation (NSF)  with the grant DMS 1900804, and the} Simons foundation. YHK is partially supported by  the 
Natural Sciences and Engineering Research Council of Canada (NSERC), {\redf with Discovery Grant RGPIN-2019-03926,}  as well as  Exploration Grant {\redf (NFRFE-2019-00944)}  from the New Frontiers in Research Fund (NFRF). 

This work has been initiated while IK was visiting UBC under the Pacific Institute for the Mathematical Sciences (PIMS) distinguished visitor program: January--May, 2020. {\redf YHK is also a member of the Kantorovich Initiative (KI) that is supported by PIMS Research Network (PRN) program. We thank PIMS for their generous support.

The final revision of this paper is completed while YHK is visiting Korea Advanced Institute of Science and Technology (KAIST). We thank for their hospitality and support.
}
% The first author was supported by ...
%
% The second author was supported in part by ...
\end{funding}

%%%%%%%%%%%%%%%%%%%%%%%%%%%%%%%%%%%%%%%%%%%%%%
%% Supplementary Material, including data   %%
%% sets and code, should be provided in     %%
%% {supplement} environment with title      %%
%% and short description. It cannot be      %%
%% available exclusively as external link.  %%
%% All Supplementary Material must be       %%
%% available to the reader on Project       %%
%% Euclid with the published article.       %%
%%%%%%%%%%%%%%%%%%%%%%%%%%%%%%%%%%%%%%%%%%%%%%
%\begin{supplement}
%\stitle{???}
%\sdescription{???.}
%\end{supplement}

%%%%%%%%%%%%%%%%%%%%%%%%%%%%%%%%%%%%%%%%%%%%%%%%%%%%%%%%%%%%%
%%                  The Bibliography                       %%
%%                                                         %%
%%  imsart-???.bst  will be used to                        %%
%%  create a .BBL file for submission.                     %%
%%                                                         %%
%%  Note that the displayed Bibliography will not          %%
%%  necessarily be rendered by Latex exactly as specified  %%
%%  in the online Instructions for Authors.                %%
%%                                                         %%
%%  MR numbers will be added by VTeX.                      %%
%%                                                         %%
%%  Use \cite{...} to cite references in text.             %%
%%                                                         %%
%%%%%%%%%%%%%%%%%%%%%%%%%%%%%%%%%%%%%%%%%%%%%%%%%%%%%%%%%%%%%

%% if your bibliography is in bibtex format, uncomment commands:
\bibliographystyle{imsart-number} % Style BST file (imsart-number.bst or imsart-nameyear.bst)
   % Bibliography file (usually '*.bib')
\bibliography{ref_stoppingFB}
%% or include bibliography directly:
% \begin{thebibliography}{}
% \bibitem{b1}
% \end{thebibliography}

%
%
%\appendix

\end{document}